\documentclass[english,10pt]{article}
\usepackage[papersize={19cm,25cm},body={15cm,21cm},centering]{geometry}
\usepackage[T1]{fontenc}
\usepackage[latin9]{inputenc}
\usepackage{amsmath}
\usepackage{amssymb}
\usepackage{url}
\usepackage{babel}
 \usepackage{graphicx}
\usepackage{epsfig,subfigure,color}
\usepackage{graphics}
 \usepackage{graphicx}
\usepackage{amsmath,amsfonts,amsthm,latexsym}
\usepackage{mathrsfs,amsbsy}
\usepackage{multirow}
\usepackage{epstopdf}
\usepackage{algorithm}
\usepackage{algorithmic}
\usepackage{cite}

\newtheorem{theorem}{Theorem}
\newtheorem{lemma}{Lemma}
\newtheorem{property}{Property}
\newtheorem{example}{Example}
\newtheorem{remark}{Remark}

\theoremstyle{definition}
\newtheorem{defn}{Definition}
\usepackage{bm}

\usepackage{rotating}  

 \usepackage{color}
 \usepackage{url}

\DeclareMathOperator{\bcirc}{bcirc}
\DeclareMathOperator{\Bdiag}{Bdiag}
\DeclareMathOperator{\Diag}{Diag}
\DeclareMathOperator{\fold}{fold}
\DeclareMathOperator{\unfold}{unfold}
 
\begin{document}
\title{Block Diagonalization of Quaternion Circulant Matrices with Applications}
\date{}
\author{
Junjun Pan \qquad  Michael K. Ng\thanks{Department of Mathematics,
Hong Kong Baptist University.
Emails: junjpan@hkbu.edu.hk, michael-ng@hkbu.edu.hk. M. Ng's research is supported
in part by Hong Kong Research Grant Council GRF 
17201020, 17300021, C7004-21GF and Joint NSFC-RGC N-HKU76921.
} 
}
\maketitle

\begin{abstract}
It is well-known that a complex circulant matrix can be diagonalized by a discrete 
Fourier matrix with imaginary unit $\mathtt{i}$. 
The main aim of this paper 
is to demonstrate that a quaternion circulant matrix cannot be 
diagonalized by a discrete
quaternion Fourier matrix with three imaginary units $\mathtt{i}$,
$\mathtt{j}$ and $\mathtt{k}$. Instead, a quaternion circulant matrix can be 
block-diagonalized into 1-by-1 block and 2-by-2 block matrices by permuted  
discrete quaternion Fourier transform matrix. With such a block-diagonalized form, 
the inverse of a quaternion circulant matrix can be determined efficiently similar to the inverse of a complex circulant matrix. 
We make use of this block-diagonalized form 
to study quaternion tensor singular value 
decomposition of quaternion tensors where the entries are quaternion numbers. The applications including computing the inverse of a quaternion circulant matrix, and solving quaternion Toeplitz system
arising from linear prediction of quaternion signals  are employed to validate the efficiency of our proposed block diagonalized results. A numerical example of color video as third-order quaternion tensor is employed to validate the effectiveness of quaternion tensor singular value decomposition. 

 
\end{abstract}

\vspace{2mm}
\noindent
{\bf Keywords}: Circulant matrix, quaternion, 
block-diagonalization, discrete Fourier transform, 
tensor, singular value decomposition

\vspace{3mm}
\noindent
{\bf AMS Subject Classifications}: 65F10, 97N30, 94A08

\section{Introduction}

Let us start with notations used throughout this paper.  The real number field, the complex number field and the quaternion algebra are defined by $\mathbb{R}$, $\mathbb{C}$ and $\mathbb{Q}$ respectively. Unless otherwise specified,
lowercase letters represent real
numbers, for example, $a\in \mathbb{R}$. The bold lowercase letters represent real vectors, such as, $\mathbf{a}\in \mathbb{R}^n$. 
Real matrices are denoted by bold capital letters, 
like $\mathbf{A} \in \mathbb{R}^{m\times n}$. The numbers, vectors, and matrices under the quaternion field are represented by the corresponding symbols with breve, for example $ \breve{a} \in \mathbb{Q}$,  $\mathbf{\breve{a}}\in \mathbb{Q}^n$ and $\mathbf{\breve{A}}\in \mathbb{Q}^{m\times n}$.

The quaternions field $\mathbb{Q}$ is generally represented in the following Cartesian form,
$$ 
\breve{q}=q_0+\mathtt{i}q_{1}+\mathtt{j}q_2+\mathtt{k}q_3,
$$
where $q_0,q_1,q_2,q_3 \in \mathbb{R}$, and $\mathtt{i},\mathtt{j},\mathtt{k}$ are imaginary units such that 
 $$\mathtt{i}^2=\mathtt{j}^2=\mathtt{k}^2=-1,  \quad \mathtt{i}\mathtt{j}=-\mathtt{j}\mathtt{i}=\mathtt{k}, \quad \mathtt{j}\mathtt{k}=-\mathtt{k} \mathtt{j}= \mathtt{i},\quad \mathtt{k}\mathtt{i}=-\mathtt{i}\mathtt{k}=\mathtt{j},\quad \mathtt{i}\mathtt{j}\mathtt{k}=-1. $$
Any quaternion $\breve{q}$ can be simply written as $\breve{q}=\textit{Re}~ \breve{q}+ \textit{Im}~\breve{q}$ with real component $\textit{Re}~\breve{q}= q_0$ and imaginary component $\textit{Im}~\breve{q}= \mathtt{i}q_1+ \mathtt{j}q_2+\mathtt{k}q_3$. We call a quaternion $\breve{q}$ pure quaternion if its real component $\textit{Re}~\breve{q}=0$.  The quaternion conjugate $\bar{\breve{q}}$ and the modulus $|\breve{q}|$ of $\breve{q}$ are defined as 
$$\bar{\breve{q}}\doteq\textit{Re}~\breve{q} -\textit{Im}~\breve{q}=q_0-\mathtt{i} q_1-\mathtt{j}q_2-\mathtt{k}q_3,\quad  |\breve{q}|\doteq \sqrt{\breve{q}\bar{\breve{q}}}=\sqrt{q_0^2+q_1^2+q_2^2+q_3^2}.$$ 
A quaternion is a unit quaternion if its modulus equals to 1, i.e., $|\breve{q}|=1$. 
The dot product of two quaternions $\breve{a}=a_0+a_1 \tt{i}+a_2 \tt{j}+a_3\tt{k}$ and $\breve{b}=b_0+b_1 \tt{i}+b_2 \tt{j}+b_3\tt{k}$ is defined as 
$\breve{a} \cdot \breve{b} = a_0b_0+a_1b_1+a_2b_2+a_3b_3$.
Similarly, for 
quaternion matrix $\breve{\mathbf{Q}}=(\breve{q}_{uv})\in \mathbb{Q}^{m\times n}$, we denote its transpose $\breve{\mathbf{Q}}^T=(\breve{q}_{vu})\in \mathbb{Q}^{n\times m}$ and its conjugate-transpose $ \breve{\mathbf{Q}}^*=(\bar{\breve{q}}_{vu})\in \mathbb{Q}^{n\times m}$.

\subsection{Complex Circulant Matrices}

A complex circulant matrix $\mathbf{C}_0 + \mathbf{C}_1 \mathtt{i} \in \mathbb{C}^{n\times n}$ has the following form,
$$
\mathbf{C}_0 + \mathbf{C}_1 \mathtt{i} =\left(
           \begin{array}{ccccc}
            c_{0}^{(0)} + c_{1}^{(0)} \mathtt{i} &
            c_{0}^{(n-1)} + c_{1}^{(n-1)} \mathtt{i} &\cdots&
      c_{0}^{(2)} + c_1^{(2)} \mathtt{i} &
            c_{0}^{(1)} + c_1^{(1)} 
            \mathtt{i} \\
            c_{0}^{(1)} + c_1^{(1)} \mathtt{i} &
            c_{0}^{(0)} + c_1^{(0)} \mathtt{i} &
            \cdots&  & 
            c_{0}^{(2)} + c_1^{(2)} \mathtt{i} \\
            \vdots& \ddots &\ddots& \ddots &\vdots\\
						c_{0}^{(n-2)} + c_1^{(n-2)} \mathtt{i} &    
					&\cdots&
					c_{0}^{(0)} + c_1^{(0)} \mathtt{i} &
            c_{0}^{(n-1)} + c_1^{(n-1)} \mathtt{i} \\
            c_{0}^{(n-1)} + c_1^{(n-1)} \mathtt{i} &  
					c_{0}^{(n-2)} + c_1^{(n-2)} \mathtt{i} &\cdots&
					c_{0}^{(1)} + c_1^{(1)} \mathtt{i} &
            c_{0}^{(0)} + c_1^{(0)} \mathtt{i}
           \end{array}
         \right),
$$
simply denoted as $\mathbf{C}_{0} + \mathbf{C}_1 \mathtt{i} =
\text{circ}(\mathbf{c}_0 + 
\mathbf{c}_1 \mathtt{i} )$, with 
$$
\mathbf{c}_0
+ 
\mathbf{c}_1 \mathtt{i}
=[c_0^{(0)} + c_1^{(0)} \mathtt{i},
c_0^{(1)} + c_1^{(1)} \mathtt{i},\cdots,
c_0^{(n-1)} + c_1^{(n-1)} \mathtt{i}].
$$
Note that each row vector is rotated one element to the right relative to the preceding row vector. 
The eigenvectors of an $n\times n$ complex circulant matrix are the columns of $ \mathbf{F}^*_{\mathtt i}$ (or $ \mathbf{F}_{\mathtt i}$), where $ \mathbf{F}_{\mathtt i}$ is the discrete Fourier transform matrix given by
$$
[\mathbf{F}_{\mathtt i} ]_{uv}= \frac{1}{\sqrt{n}}
\exp \left ( \frac{-2\pi \mathtt{i}} {n} \right )^{uv}, \quad u,v=0,\cdots,n-1,
$$
that is,
$$
(\mathbf{C}_0 + \mathbf{C}_1 \mathtt{i})
 \mathbf{F}^*_{\mathtt i} = 
 \mathbf{F}^*_{\mathtt i}
(\mathbf{\Lambda}_0 + \mathbf{\Lambda}_1 \mathtt{i}), \quad {\rm or} 
\quad 
\mathbf{C}_0 + \mathbf{C}_1 \mathtt{i} =
 \mathbf{F}^*_{\mathtt i} (\mathbf{\Lambda}_0 + \mathbf{\Lambda}_1 \mathtt{i})
 \mathbf{F}_{\mathtt i},
$$
where $\mathbf{\Lambda}_0$ and $\mathbf{\Lambda}_1$ are 
diagonal matrices.
In other words, $(\mathbf{C}_0 + \mathbf{C}_1 \mathtt{i})$
can be diagonalized by 
the discrete Fourier transform matrix with imaginary unit $\mathtt{i}$.
In the following discussion, we refer
the above discrete Fourier transform matrix 
to be $ \breve{\mathbf{F}}_{\mathtt{i}}$ 
as it is described using quaternion numbers but associated with
imaginary unit $\mathtt{i}$ in the complex field only.

\subsection{The Contribution}

In this paper, we are interested in quaternion circulant matrices 
$\breve{\mathbf{C}} \in \mathbb{Q}^{n\times n}$ has the following form 
$$
\breve{\mathbf{C}} = 
\text{circ}(
\breve{\mathbf{c}}) = 
\left(
           \begin{array}{ccccc}
            \breve{c}^{(0)} &
            \breve{c}^{(n-1)} &\cdots & \breve{c}^{(2)} &
            \breve{c}^{(1)} \\
            \breve{c}^{(1)} &
            \breve{c}^{(0)} & \cdots 
            &  &
            \breve{c}^{(2)} \\
            \vdots& \ddots &\ddots& \ddots & \vdots\\
						\breve{c}^{(n-2)} & 
            & \cdots & 
            \breve{c}^{(0)} & \breve{c}^{(n-1)} \\
            \breve{c}^{(n-1)} &
            \breve{c}^{(n-2)} &\cdots& \breve{c}^{(1)} & 
            \breve{c}^{(0)} 
           \end{array}
         \right),
$$
with $\breve{\mathbf{c}}=[
\breve{c}^{(0)},\breve{c}^{(1)},\cdots,\breve{c}^{(n-1)}]$.
We would ask whether a quaternion circulant matrix $\breve{\mathbf{C}}$ can
be diagonalized by $\breve{{\bf F}}_{\mathtt{i}}$ or other discrete 
quaternion Fourier transform matrices $\breve{\mathbf F}_{\breve{\mu}}$ ?
Here $\breve{\mathbf F}_{\breve{\mu}}$
is defined as follows \cite{ell2014quaternion,flamant2019time,flamant2017spectral,Pei}:
\begin{equation} \label{qfft}
[\breve{\mathbf{F}}_{\breve{\mu}}]_{uv}= \frac{1}{\sqrt{n}}
\exp \left ( \frac{-2\pi \breve{\mu}} {n} \right )^{uv}, \quad u,v=0,\cdots,n-1,
\end{equation}
where $\breve{\mu}$ is a pure quaternion 
$$
\breve{\mu} = \mu_1 \mathtt{i} + \mu_2 \mathtt{j} + \mu_3 \mathtt{k}
$$ 
with $| \breve{\mu} | = 1$. 
For example, when $\breve{\mu} = \mathtt{i}$, 
$\breve{\mathbf F}_{\breve{\mu}}$ is equal to $\breve{\mathbf{F}}_{\mathtt{i}}$.
In signal processing, 
$\breve{\mu}$ is usually set to be $\frac{1}{\sqrt{3}} \mathtt{i} +
\frac{1}{\sqrt{3}} \mathtt{j} + 
\frac{1}{\sqrt{3}} \mathtt{k}$, see \cite{Pei}.
In general, due to the non-commutative property of quaternary field, the answer of the above question is negative, i.e., 
$\breve{\mathbf{C}}$ cannot be diagonalized by 
$\breve{{\bf F}}_{\mathtt{i}}$ or $\breve{{\bf F}}_{\breve{\mu}}$. 


Though $\breve{\mathbf{C}}$ cannot be diagonalized by 
$\breve{{\bf F}}_{\mathtt{i}}$ or $\breve{{\bf F}}_{\breve{\mu}}$, it can still be transformed into a simple structure. More precisely, 
the structure of 
$\breve{\mathbf F}_{\breve{\mu}} \breve{\mathbf{C}}
\breve{\mathbf F}^*_{\breve{\mu}}$ is given by the following form:
\begin{equation} \label{1}
\left(
           \begin{array}{cccccc}
\dagger & 0 & \cdots  &  \cdots  & \cdots & 0 \\
0 & \dagger & 0         & \cdots & 0 & \dagger \\
   & 0 & \ddots &           & \begin{sideways} $\ddots$ \end{sideways} & 0 \\
\vdots   & \vdots &    &    \dagger     &    &  \vdots      \\
   & 0 & \begin{sideways} $\ddots$ \end{sideways} & & \ddots & 0 \\ 
0  & \dagger & 0        & \cdots & 0 & \dagger \\
   \end{array}
         \right) \quad {\rm or} \quad
	\left(
           \begin{array}{ccccccc}
\dagger & 0 & \cdots  & \cdots & \cdots & \cdots & 0 \\
0 & \dagger & 0         & \cdots & \cdots & 0 & \dagger \\
   & 0 & \ddots &     &      & \begin{sideways} $\ddots$ \end{sideways} & 0 \\
\vdots   & \vdots &    &    \dagger & \dagger    &    &  \vdots      \\
\vdots   & \vdots &    &    \dagger & \dagger    &    &  \vdots      \\
   & 0 & \begin{sideways} $\ddots$ \end{sideways} & & & \ddots & 0 \\ 
0  & \dagger & 0   & \cdots     & \cdots & 0 & \dagger \\
   \end{array}
         \right),					
\end{equation}
when the size of the matrix is even or odd. 
Note that the locations of nonzero entries represented by ``$\dagger$'' in 
(\ref{1}) can appear only in the main diagonal and anti-lower-subdiagonal 
of $\breve{\mathbf F}_{\breve{\mu}} \breve{\mathbf{C}}
\breve{\mathbf F}^*_{\breve{\mu}}$. 
With such diagonalization and anti-lower-subdiagonalization structure,
we demonstrate that 
a quaternion circulant matrix can  
be block-diagonalized into 1-by-1 block and 2-by-2 block by permutated 
discrete quaternion Fourier matrix.
Therefore, the inverse of an invertible 
$n$-by-$n$ quaternion circulant matrix 
can be computed efficiently in $O(n \log n)$ operations similar
to the inverse of an invertible $n$-by-$n$ complex circulant matrix.
By using block-daigonalization results of quaternion circulant matrix,
we can derive quaternion tensor singular value 
decomposition of quaternion tensors where all entries are quaternion numbers.  

The outline of this paper is given as follows. 
In Section 2, we present block diagonalization of 
quaternion circulant matrices by discrete quaternion Fourier matrix. 
In Section 3, we study algebraic structure of 
quaternion tensor singular value decomposition of quaternion tensors.
In Section 4, numerical examples for quaternion inverse computation, and linear prediction of quaternion signals are tested to show the efficiency of our block diagonalization results. A numerical example of color video (third-order quaternion tensor) is employed to test the
effectiveness of quaternion tensor singular decomposition.
Finally, some concluding remarks are given in Section 5.

\subsection{Remarks}

Recently, a strategy based on octonion algebra to diagonalize quaternion circulant matrices was proposed in \cite{zheng2022block}. An ocotonion $\mathit{o}\in \mathbb{O}$ is often represented as $\mathit{o}=o_1+o_2 \mathtt{i} +o_3 \mathtt{j} +o_4 \mathtt{k} +o_5\mathtt{l}+o_6 \mathtt{il}+o_7 \mathtt{jl}+o_8  \mathtt{kl}$, where $o_t\in \mathbb{R}, (t=1,2,\cdots 8)$. And $1$, $ \mathtt{i},  \mathtt{j},  \mathtt{k},  \mathtt{l},  \mathtt{il},  \mathtt{jl},  \mathtt{kl}$ are known as unit octonions. Their multiplication rule is given in the following table.
\begin{table}[H]
  \centering 
  \begin{tabular}{c|cccccccc}
  $\mathbb{O}$& 1 & $\mathtt{i}$ & $\mathtt{j}$  & $\mathtt{k}$ &$\mathtt{l}$ &$\mathtt{il}$&$\mathtt{jl}$& $\mathtt{kl}$\\
   \hline
   1& 1 & $\mathtt{i}$ & $\mathtt{j}$  & $\mathtt{k}$ &$\mathtt{l}$ &$\mathtt{il}$&$\mathtt{jl}$& $\mathtt{kl}$\\
$ \mathtt{i}$& $ \mathtt{i}$ & -1 & $\mathtt{k}$  &$ -\mathtt{j}$ &$\mathtt{il}$ &$-\mathtt{l}$&$-\mathtt{kl}$& $\mathtt{jl}$\\
$ \mathtt{j}$& $ \mathtt{j}$ & -$\mathtt{k}$ & -1  &$ \mathtt{i}$ &$\mathtt{jl}$ &$\mathtt{kl}$&$-\mathtt{l}$& $-\mathtt{il}$\\
$ \mathtt{k}$& $ \mathtt{k}$ & $\mathtt{j}$ & -$ \mathtt{i}$  & -1&$\mathtt{kl}$ &-$\mathtt{jl}$&$\mathtt{il}$& $-\mathtt{l}$\\
$ \mathtt{l}$& $ \mathtt{l}$ & -$\mathtt{il}$ & -$ \mathtt{jl}$  & $-\mathtt{kl}$&-1 & $\mathtt{i}$&$\mathtt{j}$& $\mathtt{k}$\\
$ \mathtt{il}$& $ \mathtt{il}$ & $\mathtt{l}$ & -$ \mathtt{kl}$  & $\mathtt{jl}$&- $\mathtt{i}$&-1&-$\mathtt{k}$& $\mathtt{j}$\\

$ \mathtt{jl}$&$ \mathtt{jl}$& $ \mathtt{kl}$ & $\mathtt{l}$ & -$ \mathtt{il}$  & -$\mathtt{j}$&$\mathtt{k}$&-1&-$\mathtt{i}$ \\

$ \mathtt{kl}$&$ \mathtt{kl}$& $ -\mathtt{jl}$ & $\mathtt{il}$ & $ \mathtt{l}$  & -$\mathtt{k}$&-$\mathtt{j}$&$\mathtt{i}$&-1 \\
 \end{tabular}
 \caption{Octonion multiplication Rule}\label{table:Po_result}
\end{table}

In \cite{zheng2022block}, a unitary octonion matrix based on the unit octonion $\mathtt{l}$ (or $\mathtt{jl}$, or the linear combination of $\mathtt{l}$ and $\mathtt{jl}$: $(\mathtt{l}+\mathtt{jl})/\sqrt{2}$) is built to diagonalize a quaternion circulant matrix. Its diagonalization result can be achieved by the fast Fourier transform. The work seems to provide an optimistic prospect. However, 
the octonions do not satisfy the associative law. This leads to 
some computational issues.
  
\begin{example}
Given quaternion matrix $\breve{\mathbf{C}}$ and vector $\breve{\mathbf{b}}$, find the solution $\breve{\mathbf{x}}$ to  the following quaternion equation,
$$
\breve{\mathbf{C}}\breve{\mathbf{x}}=\breve{\mathbf{b}},
$$
where
$\breve{\mathbf{c}}=
\left(
           \begin{array}{c}
             -2+1 \mathtt{i}+1 \mathtt{j}+4 \mathtt{k} \\
             -1+2 \mathtt{i}+2 \mathtt{j}+3 \mathtt{k} \\
              1+3 \mathtt{i}+2 \mathtt{j}+2 \mathtt{k} \\
			  2+4\mathtt{i}+1 \mathtt{j}+1 \mathtt{k}
           \end{array}
         \right), \quad  \breve{\mathbf{b}}=\left(
           \begin{array}{c}
             -38+12 \mathtt{i}+19 \mathtt{j}+ 19\mathtt{k} \\
             -40+18 \mathtt{i}+17 \mathtt{j}+21\mathtt{k} \\
             -37+18 \mathtt{i}+18 \mathtt{j}+25 \mathtt{k} \\
			 -35+12\mathtt{i}+14\mathtt{j}+23 \mathtt{k}
           \end{array}
         \right).
$
The true solution is given by 
$$ 
\breve{\mathbf{x}}^*=\left(
           \begin{array}{c}
             2+2 \mathtt{i}+1 \mathtt{j}+2 \mathtt{k} \\
             2+1 \mathtt{i}+1 \mathtt{j}+1 \mathtt{k} \\
             2+2 \mathtt{i}+1 \mathtt{j}+1 \mathtt{k} \\
			 2+2\mathtt{i}+2\mathtt{j}+1 \mathtt{k}
           \end{array}
         \right).$$
According to \cite{zheng2022block}, there exists unitary octonion matrix $\mathbf{O}=\mathbf{F}_i\mathtt{l}$ such that $\mathbf{O}\breve{\mathbf{C}}\mathbf{O}^*=\breve{\mathbf{D}}$. Here $\breve{\mathbf{D}}\in \mathbb{Q}^{4\times 4}$ is a diagonal quaternion matrix. One can also choose  $\mathbf{O}=\mathbf{F}_i\mathtt{jl}$, or $\mathbf{O}=\mathbf{F}_i\mathtt{(l+jl)}/2$, which will lead to the same diagonal quaternion matrix $\breve{\mathbf{D}}$. Here $\breve{\mathbf{D}}$ and its inverse are given by
$$\breve{\mathbf{D}}=\Diag\left(
           \begin{array}{l}
 -10\mathtt{i}-6\mathtt{j}-10\mathtt{k} \\ 
 -1+5\mathtt{i}-\mathtt{j}- \mathtt{k} \\
  -2+2\mathtt{i}-2\mathtt{k}  \\
  -5-\mathtt{i}+3\mathtt{j}-3\mathtt{k} \end{array}
         \right),  \quad
   \breve{\mathbf{D}}^{-1}=\Diag\left( \begin{array}{l}
0.0424\mathtt{i}+0.0254\mathtt{j}+0.0424\mathtt{k}\\ 
  -0.0357-0.1786\mathtt{i}+0.0357\mathtt{j}+0.0357 \mathtt{k}\\
   -0.1667-0.1667\mathtt{i}+0.1667\mathtt{k}\\
  -0.1136+0.0227\mathtt{i}-0.0682\mathtt{j}+0.0682\mathtt{k}
  \end{array}
         \right),
 $$
where $\Diag(\breve{\mathbf{q}})$ returns to a diagonal matrix with the elements of vector $\breve{\mathbf{q}}$ on the main diagonal.
 Since 
$\mathbf{O}\mathbf{O}^*=\mathbf{O}^*\mathbf{O}=\mathbf{I}$, we can deduce that 
$$\breve{\mathbf{x}}_o=\breve{\mathbf{C}}^{-1}\breve{\mathbf{b}}=(\mathbf{O}^*\breve{\mathbf{D}}^{-1}\mathbf{O})\breve{\mathbf{b}}=\left(
           \begin{array}{l}
             2.2143+1.8571 \mathtt{i}+1.1169 \mathtt{j}+2.1753 \mathtt{k} \\
             1.8571+0.7857 \mathtt{i}+1.0390 \mathtt{j}+0.9740 \mathtt{k} \\
             1.7857+2.1429 \mathtt{i}+0.8831 \mathtt{j}+0.8247 \mathtt{k} \\
			 2.1429+2.2143\mathtt{i}+1.9610\mathtt{j}+1.0260 \mathtt{k}
           \end{array}
         \right).
$$
By using different calculation orders, we obtain 
$$
(\mathbf{O}^*\breve{\mathbf{D}}^{-1})(\mathbf{O}\breve{\mathbf{b}})=\left(
           \begin{array}{c}
             2.2143+0.8923 \mathtt{i}-0.0218 \mathtt{j}+1.9637 \mathtt{k} \\
             1.8571+0.9897 \mathtt{i}+1.1470 \mathtt{j}+ 1.9572\mathtt{k} \\
             1.7857+1.4637 \mathtt{i}+0.8354 \mathtt{j}+1.3923\mathtt{k} \\
			 2.1429+2.3663\mathtt{i}+0.6665\mathtt{j}+2.3987\mathtt{k}
           \end{array}
         \right),
$$
$$
\mathbf{O}^*(\breve{\mathbf{D}}^{-1}\mathbf{O}\breve{\mathbf{b}})=\left(
           \begin{array}{c}
             2.2695+1.0806 \mathtt{i}-0.4146 \mathtt{j}+1.8013 \mathtt{k} \\
             1.5617+0.8403 \mathtt{i}+0.8516\mathtt{j}+ 2.1065\mathtt{k} \\
             1.7305+1.2754\mathtt{i}+1.2282 \mathtt{j}+1.5546\mathtt{k} \\
			 2.4383+2.5156\mathtt{i}+0.9620\mathtt{j}+2.2494\mathtt{k}
           \end{array}
         \right),
$$
$$
\mathbf{O}^*(\breve{\mathbf{D}}^{-1}\mathbf{O})\breve{\mathbf{b}}=\left(
           \begin{array}{c}
             2.2695+1.6688 \mathtt{i}+1.3474 \mathtt{j}+1.8382 \mathtt{k} \\
             1.5617+0.9351\mathtt{i}+1.3669\mathtt{j}+ 0.9286\mathtt{k} \\
             1.7305+2.3312\mathtt{i}+0.6526\mathtt{j}+1.1818\mathtt{k} \\
			 2.4383+2.0649\mathtt{i}+1.6331\mathtt{j}+1.0714\mathtt{k}
           \end{array}
         \right),
$$
and 
$$
\mathbf{O}^*(\breve{\mathbf{D}}^{-1}(\mathbf{O}\breve{\mathbf{b}}))=\left(
           \begin{array}{c}
             2.2143+1.8571 \mathtt{i}+1.5714 \mathtt{j}+1.5000\mathtt{k} \\
             1.8571+0.7857\mathtt{i}+1.5000\mathtt{j}+ 1.5714\mathtt{k} \\
             1.7857+2.1429\mathtt{i}+0.4286\mathtt{j}+1.5000\mathtt{k} \\
			 2.1429+2.2143\mathtt{i}+1.5000\mathtt{j}+0.4286\mathtt{k}
           \end{array}
         \right).
$$
It is interesting to note that they are not the same, and it is clear  that they are not the 
solution of the above quaternion linear system. 
This is a limitation of the computational approach 
because of the non-associative nature in octonions.

In contrast, 
our proposed method is still based on the quaternion field.
Specifically, we diagonalize the circulant matrix by Algorithm \ref{algo:qft-circ} proposed in Section 2.2 with quaternion Fourier transform matrix $\breve{\mathbf{F}}_{\breve{\mu}}$, $\breve{\mu}=\frac{1}{\sqrt{3}}\mathtt{i}+\frac{1}{\sqrt{3}}\mathtt{j}+\frac{1}{\sqrt{3}}\mathtt{k}$. The resulting diagonal matix $\breve{\mathbf{\Lambda}}$ and its inverse are given as follows.
\begin{eqnarray*}
\breve{\mathbf{\Lambda}}=\left(
           \begin{array}{cccc}
        0   &      0     &    0   &      0\\
         0  & -2.4226     &    0  &       0 \\
         0   &      0  & -2.0000   &      0\\
         0    &     0    &     0  & -3.5774 \\     
           \end{array}
         \right)+ \left(
           \begin{array}{cccc}
            10.0000    &      0      &   0        & 0\\
         0   & 1.3987  &       0 &  -2.2440\\
         0    &     0  & -2.0000   &       0\\
         0 &  -1.0893  &       0   &-2.0654
            \end{array}
         \right)\mathtt{i} \\
         +\left(
           \begin{array}{cccc}
     6.0000   &       0   &      0   &      0\\
         0 &   1.3987   &      0  &  1.6427\\
         0  &       0  &0 &        0\\
         0   &-2.9761  &       0 &  -2.0654
            \end{array}
         \right)\mathtt{j}+\left(
           \begin{array}{cccc}
     
       10.0000   &    0  &       0  &       0\\
         0   & 1.3987   &      0  &  0.6013\\
         0    &     0  &  2.0000&        0\\
         0   & 4.0654  &       0   &-2.0654
            \end{array}
         \right)\mathtt{k},
\end{eqnarray*}
and 
\begin{eqnarray*}
\breve{\mathbf{\Lambda}}^{-1}=\left(
           \begin{array}{cccc}
        0   &      0     &    0   &      0\\
         0  &-0.1118    &    0  &       0 \\
         0   &      0  & -0.1667   &      0\\
         0    &     0    &     0  &-0.0757 \\     
           \end{array}
         \right)+ \left(
           \begin{array}{cccc}
           -0.0424    &      0      &   0        & 0\\
         0   & -0.0645  &       0 &  0.0188\\
         0    &     0  &0.1667   &       0\\
         0 & 0.1270   &       0   &0.0437
            \end{array}
         \right)\mathtt{i} \\
         +\left(
           \begin{array}{cccc}
    -0.0254   &       0   &      0   &      0\\
         0 &  -0.0645   &      0&   0.0513\\
         0  &       0  &0 &        0\\
         0   &-0.0930  &       0 &  0.0437
            \end{array}
         \right)\mathtt{j}+\left(
           \begin{array}{cccc}
        -0.0424   &    0  &       0  &       0\\
         0   & -0.0645   &      0  &  -0.0701\\
         0    &     0  & -0.1667&        0\\
         0   &-0.0340 &       0   & 0.0437
            \end{array}
         \right)\mathtt{k}.
\end{eqnarray*}
The solution of $\breve{\mathbf{C}}\breve{\mathbf{x}}=\breve{\mathbf{b}}$ is
 then computed by 
 $\breve{\mathbf{F}}^*_{\breve{\mu}}\breve{\mathbf{\Lambda}}^{-1}\breve{\mathbf{F}}_{\breve{\mu}}\breve{\mathbf{b}}$ that equals $\breve{\mathbf{x}}^*$.
\end{example}

\section{Quaternion Circulant Matrices}

\subsection{Preliminaries} 

In this subsection, we derive some useful results to characterize the 
transformation of a quaternion circulant matrix under $\mathtt{i}$, 
$\mathtt{j}$ and $\mathtt{k}$ to the other three orthogonal units.

\begin{defn}
We call two pure quaternions $ \breve{\mu}=\mu_1 \tt{i}+\mu_2 \tt{j}+ \mu_3 \tt{k}$ and $\breve{\alpha}=\alpha_1 \tt{i}+\alpha_2 \tt{j}+ \alpha_3 \tt{k}$ are orthogonal, denoted as 
$\breve{\mu} \perp \breve{\alpha} $, if $\breve{\mu}\cdot\breve{\alpha}=0$, that is
$
\mu_1 \alpha_1+\mu_2 \alpha_2+\mu_3 \alpha_3=0.
$
\end{defn}

Given any unit pure quaternion $\breve{\mu}=\mu_1 \tt{i}+\mu_2 \tt{j}+\mu_3 \tt{k}$, $|\breve{\mu}|=1$, and its orthogonal unit pure quaternion $\breve{\alpha}=\alpha_1 \tt{i}+\alpha_2 \tt{j}+\alpha_3 \tt{k}$, $|\breve{\alpha}|=1$. Their product $\breve{\beta}=\beta_1 \tt{i}+\beta_2 \tt{j}+\beta_3 \tt{k}$ is defined as $\breve{\beta}=\breve{\mu}\breve{\alpha}$, that is 
\begin{eqnarray*}
\breve{\beta}&=& (\mu_1 \tt{i}+\mu_2 \tt{j}+\mu_3 \tt{k})(\alpha_1 \tt{i}+\alpha_2 \tt{j}+\alpha_3 \tt{k})\\
&=&(\mu_2\alpha_3-\mu_3\alpha_2)\tt{i}+(\mu_3\alpha_1-\mu_1\alpha_3)\tt{j}+(\mu_1\alpha_2-\mu_2\alpha_1)\tt{k}.
\end{eqnarray*}

\begin{lemma}\label{lemm:pureQ}
For any pure quaternions $\breve{\mu}$ and $\breve{\alpha}$, if $\breve{\mu} \perp \breve{\alpha}$, then their product $\breve{\beta}=\breve{\mu}\breve{\alpha}$  satisfies
$$
\breve{\mu} \perp \breve{\beta}; \quad \breve{\alpha} \perp \breve{\beta}.
$$
\end{lemma}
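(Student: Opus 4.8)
The plan is to reduce everything to two elementary identities in the quaternion algebra: for any pure quaternion $\breve{\nu}$ one has $\breve{\nu}^{2}=-|\breve{\nu}|^{2}$, and for any two pure quaternions $\breve{a},\breve{b}$ one has $\textit{Re}(\breve{a}\breve{b})=-\,\breve{a}\cdot\breve{b}$. Both follow by expanding $\breve{\nu}=\nu_1\mathtt{i}+\nu_2\mathtt{j}+\nu_3\mathtt{k}$ (resp.\ $\breve{a}\breve{b}$) with the multiplication table: the mixed terms $\nu_1\nu_2\,\mathtt{i}\mathtt{j}$, $\nu_2\nu_1\,\mathtt{j}\mathtt{i}$, etc., cancel in pairs because $\mathtt{i}\mathtt{j}=-\mathtt{j}\mathtt{i}$ and so on, leaving only $\nu_1^2\mathtt{i}^2+\nu_2^2\mathtt{j}^2+\nu_3^2\mathtt{k}^2=-|\breve{\nu}|^{2}$ in the first case and the real part $a_1b_1\mathtt{i}^2+a_2b_2\mathtt{j}^2+a_3b_3\mathtt{k}^2=-\,\breve{a}\cdot\breve{b}$ in the second.

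First I would observe that the hypothesis $\breve{\mu}\perp\breve{\alpha}$ says exactly $\breve{\mu}\cdot\breve{\alpha}=0$, hence $\textit{Re}(\breve{\mu}\breve{\alpha})=0$, so that $\breve{\beta}=\breve{\mu}\breve{\alpha}$ is genuinely a pure quaternion; this is what makes the claims $\breve{\mu}\perp\breve{\beta}$ and $\breve{\alpha}\perp\breve{\beta}$ meaningful and consistent with the coordinate formula for $\breve{\beta}$ displayed above. Next, using associativity of quaternion multiplication, I would compute $\breve{\mu}\breve{\beta}=\breve{\mu}(\breve{\mu}\breve{\alpha})=\breve{\mu}^{2}\breve{\alpha}=-|\breve{\mu}|^{2}\breve{\alpha}$. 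Since $\breve{\alpha}$ is pure, so is $-|\breve{\mu}|^{2}\breve{\alpha}$, hence $\textit{Re}(\breve{\mu}\breve{\beta})=0$; as $\breve{\mu}$ and $\breve{\beta}$ are both pure, the identity $\textit{Re}(\breve{\mu}\breve{\beta})=-\,\breve{\mu}\cdot\breve{\beta}$ then forces $\breve{\mu}\cdot\breve{\beta}=0$, i.e.\ $\breve{\mu}\perp\breve{\beta}$. Symmetrically, $\breve{\beta}\breve{\alpha}=(\breve{\mu}\breve{\alpha})\breve{\alpha}=\breve{\mu}\breve{\alpha}^{2}=-|\breve{\alpha}|^{2}\breve{\mu}$ is pure, so $\textit{Re}(\breve{\beta}\breve{\alpha})=0$ and therefore $\breve{\alpha}\cdot\breve{\beta}=0$, i.e.\ $\breve{\alpha}\perp\breve{\beta}$.

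Alternatively, one could bypass the algebra entirely and substitute the coordinate formula for $\breve{\beta}$, checking directly that $\mu_1(\mu_2\alpha_3-\mu_3\alpha_2)+\mu_2(\mu_3\alpha_1-\mu_1\alpha_3)+\mu_3(\mu_1\alpha_2-\mu_2\alpha_1)=0$ and the analogous sum with $\breve{\alpha}$; these are the scalar triple products $\breve{\mu}\cdot(\breve{\mu}\times\breve{\alpha})$ and $\breve{\alpha}\cdot(\breve{\mu}\times\breve{\alpha})$ in $\mathbb{R}^{3}$, both of which vanish because a $3\times3$ determinant with a repeated row is zero. I do not expect any real obstacle here: the only points worth flagging are that the statement implicitly uses that $\breve{\beta}$ is pure — which is precisely where the orthogonality hypothesis enters — and that the computation $\breve{\mu}(\breve{\mu}\breve{\alpha})=\breve{\mu}^{2}\breve{\alpha}$ relies on associativity, in deliberate contrast with the octonion situation discussed in the Remarks.
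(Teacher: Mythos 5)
Your proposal is correct, and your primary argument takes a genuinely different route from the paper. The paper's proof is a one-line appeal to the coordinate formula $\breve{\beta}=(\mu_2\alpha_3-\mu_3\alpha_2)\mathtt{i}+(\mu_3\alpha_1-\mu_1\alpha_3)\mathtt{j}+(\mu_1\alpha_2-\mu_2\alpha_1)\mathtt{k}$ displayed just before the lemma, i.e.\ exactly the scalar-triple-product computation you relegate to your ``alternatively'' paragraph. Your main argument instead works coordinate-free: from $\textit{Re}(\breve{a}\breve{b})=-\,\breve{a}\cdot\breve{b}$ for pure quaternions and associativity you get $\breve{\mu}\breve{\beta}=\breve{\mu}^2\breve{\alpha}=-|\breve{\mu}|^2\breve{\alpha}$ and $\breve{\beta}\breve{\alpha}=\breve{\mu}\breve{\alpha}^2=-|\breve{\alpha}|^2\breve{\mu}$, both pure, hence both dot products vanish. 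This buys two things the paper's computation does not make visible: it isolates where the orthogonality hypothesis actually enters (ensuring $\breve{\beta}$ itself is pure, so that the orthogonality claims are about pure quaternions as the paper's definition requires), and it makes explicit the reliance on associativity, which foreshadows the quaternion-versus-octonion contrast in the paper's Remarks and the subsequent identities of Property 1 (e.g.\ $\breve{\alpha}\breve{\beta}=-\breve{\mu}$ there is your $\breve{\beta}\breve{\alpha}=\breve{\mu}$ computation up to sign and normalization). The paper's route is shorter and self-contained given the displayed formula; yours generalizes more cleanly and connects the lemma to the algebraic facts used later.
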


\begin{proof}
The results follow by using $\breve{\alpha}\cdot\breve{\beta}=0$ and $\breve{\mu}\cdot\breve{\beta}=0$
\end{proof}

\begin{property}\label{prop:pureQ}
Given any unit pure quaternion $\breve{\mu}=\mu_1 \tt{i}+\mu_2 \tt{j}+\mu_3 \tt{k}$, and its orthogonal unit pure quaternion $\breve{\alpha}=\alpha_1 \tt{i}+\alpha_2 \tt{j}+\alpha_3 \tt{k}$, and their product $\breve{\beta}=\beta_1 \tt{i}+\beta_2 \tt{j}+\beta_3 \tt{k}$, then $(\breve{\mu},\breve{\alpha},\breve{\beta})$ satisfy
$$
\breve{\mu}^2=-1;\quad \breve{\alpha}^2=-1;\quad \breve{\beta}^2=-1; \quad \breve{\mu}\breve{\alpha}=-\breve{\alpha}\breve{\mu}=\breve{\beta};\quad \breve{\alpha}\breve{\beta}= -\breve{\beta}\breve{\alpha}=\breve{\mu};\quad 
\breve{\beta}\breve{\mu}=-\breve{\mu}\breve{\beta}=\breve{\alpha};
$$
\end{property}

\begin{proof}
$\breve{\beta}$ is the product of $\breve{\mu}$ and $\breve{\alpha}$, that is $\breve{\beta}=\breve{\mu}\breve{\alpha}$, 
$$
\breve{\alpha} \breve{\mu}=(\alpha_1 \tt{i}+\alpha_2 \tt{j}+\alpha_3 \tt{k})(\mu_1 \tt{i}+\mu_2 \tt{j}+\mu_3 \tt{k})=-\breve{\beta}.
$$
Also we have 
$$
\breve{\mu}^2=(\mu_1 \tt{i}+\mu_2 \tt{j}+\mu_3 \tt{k})^2=-(\mu^2_1+\mu^2_2+\mu^2_3)=-1;
$$
$$
 \breve{\alpha}^2=(\alpha_1 \tt{i}+\alpha_2 \tt{j}+\alpha_3 \tt{k})^2=-(\alpha^2_1+\alpha^2_2+\alpha^2_3)=-1. 
$$
$$
\breve{\beta}^2=\breve{\alpha} \breve{\mu}\breve{\alpha} \breve{\mu}=-\breve{\alpha} \breve{\mu} \breve{\mu}\breve{\alpha}=-1;\quad 
 \breve{\alpha}\breve{\beta}=\breve{\alpha}\breve{\alpha}\breve{\mu}=-\breve{\mu},
$$
and 
$$
\breve{\beta}\breve{\alpha}=\breve{\alpha}\breve{\mu}\breve{\alpha}=-\breve{\alpha}\breve{\alpha}\breve{\mu}=\breve{\mu}. 
$$
Similarly,
$\breve{\beta}\breve{\mu}=-\breve{\mu}\breve{\beta}=\breve{\alpha}$. The results follow.
\end{proof}

From Property \ref{prop:pureQ}, $(\breve{\mu},\breve{\alpha},\breve{\beta})$ has the same properties as $(\tt{i},\tt{j},\tt{k})$. Hence  $(\breve{\mu},\breve{\alpha},\breve{\beta})$ can be regarded as three-axis system 
like $(\tt{i},\tt{j},\tt{k})$. In the following, we will show that any quaternion matrix can be 
rewritten in the three-axis system: $(\breve{\mu},\breve{\alpha},\breve{\beta})$.

\begin{lemma}\label{lemm:Minmu}
Given a quaternion matrix $\breve{\mathbf{M}}=\mathbf{M}_0+\mathbf{M}_1 \tt{i}+\mathbf{M}_2 \tt{j}+\mathbf{M}_3 \tt{k}\in \mathbb{Q}^{m\times n}$, where $\mathbf{M}_l\in \mathbb{R}^{m\times n}$, $l=0,1,2,3$, and unit pure quaternion three-axis system $(\breve{\mu},\breve{\alpha},\breve{\beta})$, then 
$$
\breve{\mathbf{M}}=\mathbf{A}_0+\mathbf{A}_1 \breve{\mu}+\mathbf{A}_2 \breve{\alpha}+\mathbf{A}_3 \breve{\beta},$$
where $\mathbf{A}_0=\mathbf{M}_0$, $\mathbf{A}_1=\sum\limits^3_{l=1}\mu_l \mathbf{M}_l$, $\mathbf{A}_2=\sum\limits^3_{l=1}\alpha_l \mathbf{M}_l$, $\mathbf{A}_3=\sum\limits^3_{l=1}\beta_l \mathbf{M}_l$.
\end{lemma}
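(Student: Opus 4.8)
The plan is to reduce the lemma to the single fact that $(\breve{\mu},\breve{\alpha},\breve{\beta})$ is an orthonormal basis of the space of pure quaternions, which Lemma~\ref{lemm:pureQ} and Property~\ref{prop:pureQ} have already established. Concretely, I would collect the nine real coefficients $\mu_l,\alpha_l,\beta_l$ ($l=1,2,3$) into the $3\times 3$ real matrix
$$
\mathbf{R}=\begin{pmatrix}\mu_1 & \mu_2 & \mu_3\\ \alpha_1 & \alpha_2 & \alpha_3\\ \beta_1 & \beta_2 & \beta_3\end{pmatrix},
$$
so that the first step is to prove $\mathbf{R}$ is orthogonal. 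By assumption $|\breve{\mu}|=|\breve{\alpha}|=1$ and $\breve{\mu}\perp\breve{\alpha}$; Lemma~\ref{lemm:pureQ} adds $\breve{\mu}\perp\breve{\beta}$ and $\breve{\alpha}\perp\breve{\beta}$, and Property~\ref{prop:pureQ} gives $\breve{\beta}^2=-1$, i.e. $|\breve{\beta}|=1$. Hence the rows of $\mathbf{R}$ are orthonormal, $\mathbf{R}\mathbf{R}^T=\mathbf{I}$, and since $\mathbf{R}$ is then invertible with $\mathbf{R}^{-1}=\mathbf{R}^T$ we also obtain $\mathbf{R}^T\mathbf{R}=\mathbf{I}$, i.e. the columns of $\mathbf{R}$ are orthonormal too.

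The second step is a direct expansion of the proposed right-hand side. Since $\mathbf{A}_0,\mathbf{A}_1,\mathbf{A}_2,\mathbf{A}_3$ are real matrices and $\mu_l,\alpha_l,\beta_l$ are real scalars, the products $\mathbf{A}_1\breve{\mu}$, $\mathbf{A}_2\breve{\alpha}$, $\mathbf{A}_3\breve{\beta}$ distribute over $\mathtt{i},\mathtt{j},\mathtt{k}$ with the scalars moved through freely, so no ordering issue arises. Collecting the coefficient of $\mathtt{i}$ gives
$$
\mu_1\mathbf{A}_1+\alpha_1\mathbf{A}_2+\beta_1\mathbf{A}_3=\sum_{l=1}^{3}\bigl(\mu_1\mu_l+\alpha_1\alpha_l+\beta_1\beta_l\bigr)\mathbf{M}_l=\sum_{l=1}^{3}[\mathbf{R}^T\mathbf{R}]_{1l}\,\mathbf{M}_l=\mathbf{M}_1,
$$
where the last equality uses $\mathbf{R}^T\mathbf{R}=\mathbf{I}$; the identical computation with the second and third columns of $\mathbf{R}$ shows that the coefficients of $\mathtt{j}$ and $\mathtt{k}$ equal $\mathbf{M}_2$ and $\mathbf{M}_3$. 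Adding $\mathbf{A}_0=\mathbf{M}_0$ then recovers $\mathbf{M}_0+\mathbf{M}_1\mathtt{i}+\mathbf{M}_2\mathtt{j}+\mathbf{M}_3\mathtt{k}=\breve{\mathbf{M}}$, which is exactly the claimed identity.

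The only genuine content is the orthogonality of $\mathbf{R}$, and that has effectively been done already in Lemma~\ref{lemm:pureQ} and Property~\ref{prop:pureQ}; after that the lemma is a one-line change-of-basis calculation, so I do not expect a real obstacle. The one point worth spelling out rather than leaving implicit is that the expansion uses column orthonormality ($\mathbf{R}^T\mathbf{R}=\mathbf{I}$), whereas the hypotheses directly supply row orthonormality ($\mathbf{R}\mathbf{R}^T=\mathbf{I}$); for a square real matrix these are equivalent. An equivalent route, if one prefers to avoid $\mathbf{R}$ altogether, is to verify $\mathtt{i}=\mu_1\breve{\mu}+\alpha_1\breve{\alpha}+\beta_1\breve{\beta}$ and likewise for $\mathtt{j}$ and $\mathtt{k}$, substitute into $\breve{\mathbf{M}}=\mathbf{M}_0+\mathbf{M}_1\mathtt{i}+\mathbf{M}_2\mathtt{j}+\mathbf{M}_3\mathtt{k}$, and regroup; this uses the same orthogonality facts in transposed form.
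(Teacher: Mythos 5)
Your proposal is correct and follows essentially the same route as the paper: both arguments reduce to the observation that the $3\times 3$ real matrix of coefficients of $(\breve{\mu},\breve{\alpha},\breve{\beta})$ is orthogonal (via Lemma~\ref{lemm:pureQ} and Property~\ref{prop:pureQ}), so its inverse is its transpose. The only cosmetic difference is that you verify the stated formulas for $\mathbf{A}_l$ directly by collecting the $\mathtt{i},\mathtt{j},\mathtt{k}$ coefficients, whereas the paper derives them by writing the coefficient matching as a linear system and inverting; you also make explicit the row-versus-column orthonormality point that the paper leaves implicit.
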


\begin{proof}
\begin{eqnarray*}
\breve{\mathbf{M}}&=&\mathbf{A}_0+\mathbf{A}_1 \breve{\mu}+\mathbf{A}_2 \breve{\alpha}+\mathbf{A}_3 \breve{\beta}\\
&=&\mathbf{A}_0+\mathbf{A}_1 (\mu_1 \tt{i}+\mu_2 \tt{j}+\mu_3 \tt{k})+\mathbf{A}_2 (\alpha_1 \tt{i}+\alpha_2 \tt{j}+\alpha_3 \tt{k})+\mathbf{A}_3 (\beta_1 \tt{i}+\beta_2 \tt{j}+\beta_3 \tt{k})\\
&=&\mathbf{A}_0+(\mu_1 \mathbf{A}_1+\alpha_1 \mathbf{A}_2 +\beta_1 \mathbf{A}_3)\tt{i}+(\mu_2 \mathbf{A}_1+\alpha_2\mathbf{A}_2 +\beta_2 \mathbf{A}_3)\tt{j}
+(\mu_3 \mathbf{A}_1+\alpha_3\mathbf{A}_2 +\beta_3 \mathbf{A}_3)\tt{k}.
\end{eqnarray*}
Since $\breve{\mathbf{M}}=\mathbf{M}_0+\mathbf{M}_1 \tt{i}+\mathbf{M}_2 \tt{j}+\mathbf{M}_3 \tt{k}$, from Lemma \ref{lemm:pureQ}, we deduce that
$$
\left(
           \begin{array}{c}
             \mathbf{M}_1  \\
           \mathbf{M}_2\\
           \mathbf{M}_3
           \end{array}
         \right)=\left(
           \begin{array}{ccc}
            \mu_1 & \alpha_1& \beta_1\\
            \mu_2 & \alpha_2& \beta_2\\
             \mu_3 & \alpha_3& \beta_3
           \end{array}
         \right)\left(
           \begin{array}{c}
             \mathbf{A}_1  \\
           \mathbf{A}_2\\
           \mathbf{A}_3
           \end{array}
         \right),\quad \text{then}
 \left(
           \begin{array}{c}
             \mathbf{A}_1  \\
           \mathbf{A}_2\\
           \mathbf{A}_3
           \end{array}
         \right)=\left(
           \begin{array}{ccc}
            \mu_1 & \alpha_1& \beta_1\\
            \mu_2 & \alpha_2& \beta_2\\
             \mu_3 & \alpha_3& \beta_3
           \end{array}
         \right)^T\left(
           \begin{array}{c}
             \mathbf{M}_1  \\
           \mathbf{M}_2\\
           \mathbf{M}_3
           \end{array}
         \right).
$$
The results follow. 
\end{proof}

According to Lemma 2, to obtain the new representation
in three-axis system $(\breve{\mu},\breve{\alpha},\breve{\beta})$, we can combine the matrices in the original
three-axis system $(\tt{i},\tt{j},\tt{k})$. The following lemma presents a property of quaternions $\breve{\mathbf{X}}$ which can be expressed as $\mathbf{X}_0+\mathbf{X}_1 \breve{\mu}.$

\begin{lemma}\label{lem:qua-commu}
Given unit pure quaternion three-axis system 
$(\breve{\mu},\breve{\alpha},\breve{\beta})$, for any $\breve{\mathbf{X}}=\mathbf{X}_0+\mathbf{X}_1 \breve{\mu} \in \mathbb{Q}^{m\times n}$,
$$
\breve{\mu} \breve{\mathbf{X}}=\breve{\mathbf{X}}\breve{\mu},\quad
\breve{\alpha} \breve{\mathbf{X}}=(\breve{\mathbf{X}}^*)^T\breve{\alpha},\quad
\breve{\beta} \breve{\mathbf{X}}=(\breve{\mathbf{X}}^*)^T\breve{\beta},
$$
\end{lemma}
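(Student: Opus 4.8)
The plan is a short piece of bookkeeping. Write $\breve{\mathbf{X}}=\mathbf{X}_0+\mathbf{X}_1\breve{\mu}$ with $\mathbf{X}_0,\mathbf{X}_1\in\mathbb{R}^{m\times n}$, and use two facts. First, a real matrix has real entries, hence commutes entrywise with any quaternion scalar; in particular $\breve{\mu}\mathbf{X}_\ell=\mathbf{X}_\ell\breve{\mu}$, $\breve{\alpha}\mathbf{X}_\ell=\mathbf{X}_\ell\breve{\alpha}$ and $\breve{\beta}\mathbf{X}_\ell=\mathbf{X}_\ell\breve{\beta}$ for $\ell=0,1$. Second, Property \ref{prop:pureQ} furnishes the full multiplication table of the three-axis system $(\breve{\mu},\breve{\alpha},\breve{\beta})$, namely $\breve{\mu}^2=-1$, $\breve{\mu}\breve{\alpha}=\breve{\beta}=-\breve{\alpha}\breve{\mu}$, and $\breve{\beta}\breve{\mu}=\breve{\alpha}=-\breve{\mu}\breve{\beta}$. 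It is also convenient to record that $(\breve{\mathbf{X}}^*)^T$ is simply the entrywise quaternion conjugate of $\breve{\mathbf{X}}$, so $(\breve{\mathbf{X}}^*)^T=\mathbf{X}_0-\mathbf{X}_1\breve{\mu}$.

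For the first identity I would compute $\breve{\mu}\breve{\mathbf{X}}=\breve{\mu}\mathbf{X}_0+\breve{\mu}\mathbf{X}_1\breve{\mu}=\mathbf{X}_0\breve{\mu}+\mathbf{X}_1\breve{\mu}^2=(\mathbf{X}_0+\mathbf{X}_1\breve{\mu})\breve{\mu}=\breve{\mathbf{X}}\breve{\mu}$, moving each real matrix past the scalar $\breve{\mu}$. For the second, $\breve{\alpha}\breve{\mathbf{X}}=\breve{\alpha}\mathbf{X}_0+\breve{\alpha}\mathbf{X}_1\breve{\mu}=\mathbf{X}_0\breve{\alpha}+\mathbf{X}_1(\breve{\alpha}\breve{\mu})=\mathbf{X}_0\breve{\alpha}-\mathbf{X}_1\breve{\beta}$, while $(\mathbf{X}_0-\mathbf{X}_1\breve{\mu})\breve{\alpha}=\mathbf{X}_0\breve{\alpha}-\mathbf{X}_1(\breve{\mu}\breve{\alpha})=\mathbf{X}_0\breve{\alpha}-\mathbf{X}_1\breve{\beta}$, so the two sides agree and $\breve{\alpha}\breve{\mathbf{X}}=(\breve{\mathbf{X}}^*)^T\breve{\alpha}$. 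The third is the same computation with $\breve{\beta}$ in place of $\breve{\alpha}$: $\breve{\beta}\breve{\mathbf{X}}=\mathbf{X}_0\breve{\beta}+\mathbf{X}_1(\breve{\beta}\breve{\mu})=\mathbf{X}_0\breve{\beta}+\mathbf{X}_1\breve{\alpha}$ and $(\mathbf{X}_0-\mathbf{X}_1\breve{\mu})\breve{\beta}=\mathbf{X}_0\breve{\beta}-\mathbf{X}_1(\breve{\mu}\breve{\beta})=\mathbf{X}_0\breve{\beta}+\mathbf{X}_1\breve{\alpha}$ coincide.

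I expect no genuine obstacle; the only thing needing care is keeping the signs in Property \ref{prop:pureQ} straight --- in particular that $\breve{\alpha}\breve{\mu}=-\breve{\mu}\breve{\alpha}$ and $\breve{\mu}\breve{\beta}=-\breve{\beta}\breve{\mu}$ --- and noticing that passing $\breve{\alpha}$ or $\breve{\beta}$ across the scalar factor $\breve{\mu}$ inside $\breve{\mathbf{X}}$ is precisely what flips the sign of the $\breve{\mu}$-component and thereby yields the entrywise conjugate $(\breve{\mathbf{X}}^*)^T$. Everything else is distributing a scalar over an $m\times n$ array term by term.
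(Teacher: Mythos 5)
Your proposal is correct and follows essentially the same route as the paper: decompose $\breve{\mathbf{X}}=\mathbf{X}_0+\mathbf{X}_1\breve{\mu}$, commute the real matrices past the quaternion scalars, and apply the multiplication rules of Property \ref{prop:pureQ} ($\breve{\alpha}\breve{\mu}=-\breve{\beta}$, $\breve{\beta}\breve{\mu}=\breve{\alpha}$), with the extra helpful step of recording explicitly that $(\breve{\mathbf{X}}^*)^T=\mathbf{X}_0-\mathbf{X}_1\breve{\mu}$, which the paper leaves implicit. All signs check out.
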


\begin{proof} From Property \ref{prop:pureQ}, we have
$$
\breve{\mu}\breve{\mathbf{X}}=\mathbf{X}_0\breve{\mu}+\mathbf{X}_1\breve{\mu}^2=\mathbf{X}_0\breve{\mu}-\mathbf{X}_1=\breve{\mathbf{X}}\breve{\mu};
$$
$$
\breve{\alpha}\breve{\mathbf{X}}=\mathbf{X}_0\breve{\alpha}+\mathbf{X}_1\breve{\alpha}\breve{\mu}=\mathbf{X}_0\breve{\alpha}-\mathbf{X}_1\breve{\beta}=(\breve{\mathbf{X}}^*)^T\breve{\alpha};
$$
$$
\breve{\beta}\breve{\mathbf{X}}=\mathbf{X}_0\breve{\beta}+\mathbf{X}_1\breve{\beta}\breve{\mu}=\mathbf{X}_0\breve{\beta}+\mathbf{X}_1\breve{\alpha}=(\breve{\mathbf{X}}^*)^T\breve{\beta}.
$$
The results follow.
\end{proof}

\subsection{Block-Diagonalization}

Given a quaternion circulant matrix $\breve{\mathbf{C}}\in \mathbb{Q}^{n\times n}$,
$$
\breve{\mathbf{C}}=\mathbf{C}_0+\mathbf{C}_1\mathtt{i}+\mathbf{C}_2\mathtt{j}+\mathbf{C}_3\mathtt{k}
$$
where $\mathbf{C}_l$, $l=0,1,2,3$ are circulant real matrices.


\begin{lemma}
Given unit pure quaternion three-axis system $(\breve{\mu},\breve{\alpha},\breve{\beta})$, any quaternion circulant matrix $\breve{\mathbf{C}}$ can be represented in $(\breve{\mu},\breve{\alpha},\breve{\beta})$, that is
$$
\breve{\mathbf{C}}=\mathbf{S}_0+\mathbf{S}_1\breve{\mu}+\mathbf{S}_2\breve{\alpha}+\mathbf{S}_3\breve{\beta},
$$
where $\mathbf{S}_l\in \mathbb{R}^{n\times n}$, $l=0,1,2,3$.
Then the coefficient matrices $\{\mathbf{S}_l \}^3_{l=0}$ are also circulant matrices. 
\end{lemma}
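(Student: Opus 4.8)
The plan is to reduce the statement to the elementary fact that the $n\times n$ real circulant matrices form a linear subspace of $\mathbb{R}^{n\times n}$, and then quote Lemma~\ref{lemm:Minmu}. First I would record that the Cartesian-form coordinate matrices of a quaternion circulant matrix are themselves circulant: writing $\breve{\mathbf{C}}=\mathbf{C}_0+\mathbf{C}_1\mathtt{i}+\mathbf{C}_2\mathtt{j}+\mathbf{C}_3\mathtt{k}$, the $(u,v)$ entry of $\breve{\mathbf{C}}$ depends only on $(u-v)\bmod n$, hence the same holds for the $(u,v)$ entry of each real coordinate matrix $\mathbf{C}_l$; so $\mathbf{C}_0,\mathbf{C}_1,\mathbf{C}_2,\mathbf{C}_3$ are real circulant matrices (this is exactly the assumption stated just before the lemma).

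Next I would apply Lemma~\ref{lemm:Minmu} with $\breve{\mathbf{M}}=\breve{\mathbf{C}}$. It yields $\mathbf{S}_0=\mathbf{C}_0$ together with $\mathbf{S}_1=\sum_{l=1}^{3}\mu_l\mathbf{C}_l$, $\mathbf{S}_2=\sum_{l=1}^{3}\alpha_l\mathbf{C}_l$, and $\mathbf{S}_3=\sum_{l=1}^{3}\beta_l\mathbf{C}_l$, so that each $\mathbf{S}_l$ is a fixed \emph{real} linear combination of the circulant matrices $\mathbf{C}_0,\mathbf{C}_1,\mathbf{C}_2,\mathbf{C}_3$; the coefficients are real because they are entries of the real orthogonal matrix with columns $(\mu_i)$, $(\alpha_i)$, $(\beta_i)$.

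Finally I would invoke closure of circulant matrices under real linear combination. If $\mathbf{P}$ denotes the cyclic down-shift permutation matrix, then a real matrix $\mathbf{A}$ is circulant if and only if $\mathbf{A}\mathbf{P}=\mathbf{P}\mathbf{A}$, and this commutation relation is clearly preserved by real linear combinations; applying it to $\mathbf{S}_0,\dots,\mathbf{S}_3$ shows each is circulant. I do not expect any genuine obstacle: the proof is a direct corollary of Lemma~\ref{lemm:Minmu} plus linearity of the circulant structure. The only points needing (routine) care are verifying that the $\mathbf{C}_l$ are circulant and that the change-of-basis coefficients are real; one may additionally remark that the representation $\mathbf{S}_0+\mathbf{S}_1\breve{\mu}+\mathbf{S}_2\breve{\alpha}+\mathbf{S}_3\breve{\beta}$ is unique, so ``the'' coefficient matrices are well defined.
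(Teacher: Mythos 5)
Your argument is correct and is essentially the paper's own proof, which simply states that the result follows directly from Lemma~\ref{lemm:Minmu}; you have just made explicit the two routine facts the paper leaves implicit, namely that the Cartesian coordinate matrices $\mathbf{C}_l$ of a quaternion circulant matrix are real circulant and that real linear combinations of circulant matrices remain circulant. No further comment is needed.
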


\begin{proof}
The result follows directly by Lemma \ref{lemm:Minmu}.
\end{proof}

\begin{lemma}\label{prop:QFM}
Let $\breve{\mathbf{F}}_{\breve{\mu}}$ be 
the quaternion discrete Fourier matrix, defined in 
(\ref{qfft}), then
$$
\breve{\mathbf{F}}^*_{\breve{\mu}}
\breve{\mathbf{F}}_{\breve{\mu}}=
\breve{\mathbf{F}}_{\breve{\mu}}
\breve{\mathbf{F}}^*_{\breve{\mu}}=\mathbf{I}_n;
\quad 
\breve{\mathbf{F}}^2_{\breve{\mu}}=(
\breve{\mathbf{F}}^*_{\breve{\mu}})^2=\mathbf{A}; 
$$
where 
\begin{equation}\label{P}
\mathbf{A} = \left(
           \begin{array}{ccccc}
            1 & 0& \cdots& 0&0\\
            0& 0&\cdots&0&1\\
             0 &0& \cdots&1&0\\
             \vdots&\vdots&\begin{sideways} $\ddots$ \end{sideways}
             &\vdots&\vdots\\
             0&1&\cdots&0&0
           \end{array}
         \right).
 \end{equation}
\end{lemma}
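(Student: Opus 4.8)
The plan is to exploit the fact that, although $\mathbb{Q}$ is non-commutative, \emph{every} entry of $\breve{\mathbf{F}}_{\breve{\mu}}$ lies in the real subalgebra $\mathbb{R}\oplus\mathbb{R}\breve{\mu}$ spanned by $1$ and $\breve{\mu}$; by Property \ref{prop:pureQ} we have $\breve{\mu}^2=-1$, so this subalgebra is a commutative field isomorphic to $\mathbb{C}$. Hence products of matrices whose entries all lie in $\mathbb{R}\oplus\mathbb{R}\breve{\mu}$ are computed exactly as over $\mathbb{C}$, and all the classical manipulations for the ordinary complex DFT matrix go through verbatim. Concretely, I would set $\omega \doteq \exp(-2\pi\breve{\mu}/n)=\cos(2\pi/n)-\breve{\mu}\sin(2\pi/n)$, so that $[\breve{\mathbf{F}}_{\breve{\mu}}]_{uv}=\tfrac{1}{\sqrt{n}}\,\omega^{uv}$, and record that all powers of $\omega$ stay in $\mathbb{R}\oplus\mathbb{R}\breve{\mu}$, that $\omega^{n}=1$, that $\overline{\omega^{k}}=\omega^{-k}$, and that $\omega^{k}=1$ if and only if $k\equiv 0\ (\mathrm{mod}\ n)$.

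For unitarity, note that $\breve{\mathbf{F}}^{*}_{\breve{\mu}}$ has $(u,v)$ entry $\overline{[\breve{\mathbf{F}}_{\breve{\mu}}]_{vu}}=\tfrac{1}{\sqrt{n}}\,\omega^{-uv}$, so the $(u,v)$ entry of $\breve{\mathbf{F}}^{*}_{\breve{\mu}}\breve{\mathbf{F}}_{\breve{\mu}}$ is $\tfrac{1}{n}\sum_{w=0}^{n-1}\omega^{w(v-u)}$. This geometric sum is carried out entirely inside the commutative subalgebra; it equals $1$ when $u=v$, and equals $\frac{\omega^{n(v-u)}-1}{\omega^{v-u}-1}=0$ when $u\neq v$ since $\omega^{n}=1$ while $\omega^{v-u}\neq 1$. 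Therefore $\breve{\mathbf{F}}^{*}_{\breve{\mu}}\breve{\mathbf{F}}_{\breve{\mu}}=\mathbf{I}_{n}$, and the identical computation with the factors in the opposite order gives $\breve{\mathbf{F}}_{\breve{\mu}}\breve{\mathbf{F}}^{*}_{\breve{\mu}}=\mathbf{I}_{n}$.

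For the second identity, the $(u,v)$ entry of $\breve{\mathbf{F}}^{2}_{\breve{\mu}}$ is $\tfrac{1}{n}\sum_{w=0}^{n-1}\omega^{uw}\omega^{wv}=\tfrac{1}{n}\sum_{w=0}^{n-1}\omega^{w(u+v)}$, which equals $1$ when $u+v\equiv 0\ (\mathrm{mod}\ n)$ and $0$ otherwise; that is, it is $1$ exactly at $(0,0)$ and at $(u,n-u)$ for $u=1,\dots,n-1$, which is precisely the permutation matrix $\mathbf{A}$ of \eqref{P}. Replacing $\omega$ by $\omega^{-1}=\bar{\omega}$ throughout shows that $(\breve{\mathbf{F}}^{*}_{\breve{\mu}})^{2}$ has the same entries, since the condition $u+v\equiv 0\ (\mathrm{mod}\ n)$ is insensitive to the sign of the exponent. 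I do not anticipate a genuine obstacle: the only point requiring care is that the product of two quaternion matrices whose entries lie in $\mathbb{R}\oplus\mathbb{R}\breve{\mu}$ again has all entries in $\mathbb{R}\oplus\mathbb{R}\breve{\mu}$ and may be treated commutatively, which legitimizes the geometric-sum evaluation and the rule $\overline{\omega^{k}}=\omega^{-k}$ — this is exactly where Property \ref{prop:pureQ} is used, and it is precisely the closed-subalgebra property that breaks down when $\breve{\mathbf{F}}_{\breve{\mu}}$ is applied on both sides of a general quaternion circulant matrix, explaining why only the block structure \eqref{1} can survive there.
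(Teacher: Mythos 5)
Your proposal is correct and follows essentially the same route as the paper's proof: writing $\omega=\exp(-2\pi\breve{\mu}/n)$, computing each entry of the products as a geometric sum in powers of $\omega$, and evaluating it via $\omega^{n}=1$ to get the identity for $u=v$ (resp.\ $u+v\equiv 0 \bmod n$) and zero otherwise. Your explicit observation that all entries live in the commutative subalgebra $\mathbb{R}\oplus\mathbb{R}\breve{\mu}\cong\mathbb{C}$, which legitimizes the geometric-sum manipulation and the rule $\overline{\omega^{k}}=\omega^{-k}$, is a point the paper leaves implicit, but it does not change the argument.
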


\begin{proof} Let $\breve{\omega}=\exp \left ( {\displaystyle 
\frac{-2 \pi \breve{\mu}}{n}} \right )$, then
$$
(\breve{\mathbf{F}}^*_{\mu}
\breve{\mathbf{F}}_{\mu})_{st}=
\frac{1}{n}
\left (1+\breve{\omega}^{t-s}+\breve{\omega}^{2(t-s)}+\cdots+\breve{\omega}^{(n-1)(t-s)} \right),
$$
when $s=t$, $ [ 
\breve{
\mathbf{F}}^*_{\breve{\mu}}
\breve{
\mathbf{F}}_{\breve{\mu}}] _{st}=1$; when $s\neq t$, 
$[
\breve{
\mathbf{F}}^*_{\breve{\mu}}
\breve{
\mathbf{F}}_{\breve{\mu}}]_{st}=
{\displaystyle 
\frac{1}{n} \cdot \frac{(\breve{\omega}^{t-s})^n-1}{\breve{\omega}^{t-s}-1}}=0$. 
That is $
\breve{
\mathbf{F}}^*_{\breve{\mu}}
\breve{
\mathbf{F}}_{\breve{\mu}}=\mathbf{I}_n$. 
Similarly, $\breve{\mathbf{F}}_{\breve{\mu}}
\breve{\mathbf{F}}^*_{\breve{\mu}}=\mathbf{I}_n$.
$$
[\breve{\mathbf{F}}^2_{\breve{\mu}}]_{st}=\frac{1}{N}(1+\breve{\omega}^{t+s-2}+\breve{\omega}^{2(t+s-2)}+\cdots+\breve{\omega}^{(n-1)(t+s-2)}),
$$
when $s=t=1$ or $t+s-2=n$, 
$[
\breve{\mathbf{F}}^2_{\breve{\mu}}]_{st}=1$; for other entries, $[
\breve{\mathbf{F}}^2_{\breve{\mu}}]_{st}=0$. 
It implies that $\breve{\mathbf{F}}^2_{\breve{\mu}}=\mathbf{A}$. 
Similarly, $(\breve{\mathbf{F}}^*_{\breve{\mu}})^2=\mathbf{A}$. 
The results follow. 
\end{proof}

\begin{lemma}\label{lem:6}
For any real circulant matrix $\mathbf{C}=\text{circ}(\mathbf{c})\in \mathbb{R}^{n\times n}$, with $\mathbf{c}=(c^{(0)},c^{(1)},\cdots,c^{(n-1)})$, given 
discrete quaternion Fourier transform matrix 
$\breve{\mathbf{F}}_{\breve{\mu}}$, then 
$\breve{\mathbf{F}}_{\breve{\mu}}\mathbf{C} \breve{\mathbf{F}}^*_{\breve{\mu}}= \Diag (\sqrt{n}\mathbf{F}_{\breve{\mu}}\mathbf{c})$, where $Diag(\mathbf{c})$ returns a square diagonal matrix with the elements of vector $\mathbf{c}$ on the main diagonal.
\end{lemma}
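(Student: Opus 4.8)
The plan is to prove the identity by a direct entrywise computation of $\breve{\mathbf{F}}_{\breve{\mu}}\mathbf{C}\breve{\mathbf{F}}^*_{\breve{\mu}}$, mirroring the classical complex argument but watching carefully for the one place where non-commutativity could interfere. As in the proof of Lemma~\ref{prop:QFM}, write $\breve{\omega}=\exp(-2\pi\breve{\mu}/n)$; since $\breve{\mu}^2=-1$ we have $\exp(\breve{\mu}\theta)=\cos\theta+\breve{\mu}\sin\theta$, hence $\breve{\omega}^n=1$ and $\bar{\breve{\omega}}=\breve{\omega}^{-1}$. Consequently $[\breve{\mathbf{F}}_{\breve{\mu}}]_{uv}=\frac{1}{\sqrt{n}}\breve{\omega}^{uv}$ and $[\breve{\mathbf{F}}^*_{\breve{\mu}}]_{vt}=\overline{[\breve{\mathbf{F}}_{\breve{\mu}}]_{tv}}=\frac{1}{\sqrt{n}}\breve{\omega}^{-tv}$. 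Using the circulant structure $C_{uv}=c^{((u-v)\bmod n)}$ and the substitution $w=(u-v)\bmod n$, i.e.\ $u=(v+w)\bmod n$, together with $\breve{\omega}^n=1$ to discard the reductions modulo $n$ in the exponents, I obtain
$$
[\breve{\mathbf{F}}_{\breve{\mu}}\mathbf{C}\breve{\mathbf{F}}^*_{\breve{\mu}}]_{st}=\frac{1}{n}\sum_{u,v=0}^{n-1}\breve{\omega}^{su}\,c^{((u-v)\bmod n)}\,\breve{\omega}^{-tv}=\frac{1}{n}\sum_{v,w=0}^{n-1}c^{(w)}\,\breve{\omega}^{sv}\,\breve{\omega}^{sw}\,\breve{\omega}^{-tv}.
$$

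The single place the hypothesis that $\mathbf{C}$ is real is used is the rearrangement of this double sum: each $c^{(w)}\in\mathbb{R}$ is central, and $\breve{\omega}^{sv}$, $\breve{\omega}^{sw}$, $\breve{\omega}^{-tv}$ are all powers of the \emph{one} quaternion $\breve{\omega}$ and so commute among themselves. Hence the sum factors exactly as in the complex case,
$$
[\breve{\mathbf{F}}_{\breve{\mu}}\mathbf{C}\breve{\mathbf{F}}^*_{\breve{\mu}}]_{st}=\left(\frac{1}{n}\sum_{v=0}^{n-1}\breve{\omega}^{(s-t)v}\right)\left(\sum_{w=0}^{n-1}c^{(w)}\breve{\omega}^{sw}\right).
$$
The first factor is disposed of by the same geometric-series computation already carried out in Lemma~\ref{prop:QFM}: it equals $1$ when $s=t$, and when $s\neq t$ it equals $\frac{1}{n}(\breve{\omega}^{s-t}-1)^{-1}\big((\breve{\omega}^{s-t})^n-1\big)=0$, the inverse existing because $\breve{\omega}^{s-t}=1$ would force $n\mid(s-t)$, impossible for distinct $s,t\in\{0,\dots,n-1\}$. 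Therefore $\breve{\mathbf{F}}_{\breve{\mu}}\mathbf{C}\breve{\mathbf{F}}^*_{\breve{\mu}}$ is diagonal with $s$-th diagonal entry $\sum_{w}c^{(w)}\breve{\omega}^{sw}=\sqrt{n}\,[\breve{\mathbf{F}}_{\breve{\mu}}\mathbf{c}]_s$ by the definition (\ref{qfft}); that is, $\breve{\mathbf{F}}_{\breve{\mu}}\mathbf{C}\breve{\mathbf{F}}^*_{\breve{\mu}}=\mathrm{Diag}(\sqrt{n}\,\breve{\mathbf{F}}_{\breve{\mu}}\mathbf{c})$, as claimed.

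I expect no genuine obstacle here — the content is just the classical diagonalization of a circulant matrix — but the write-up should state explicitly that the factorization of the double sum relies both on $\mathbf{C}$ having real entries and on $\breve{\omega}$ being a single generator; this is exactly the step that fails for circulant matrices with honestly quaternionic coefficients, which is why such matrices only get block-diagonalized later in this section. An equivalent and slightly cleaner route, if preferred, is to write $\mathbf{C}=\sum_{k=0}^{n-1}c^{(k)}\mathbf{P}^k$ with $\mathbf{P}$ the basic cyclic-shift circulant, check directly that $\breve{\mathbf{F}}_{\breve{\mu}}\mathbf{P}\breve{\mathbf{F}}^*_{\breve{\mu}}=\mathrm{Diag}(1,\breve{\omega},\dots,\breve{\omega}^{n-1})$, raise this to the $k$-th power using $\breve{\mathbf{F}}^*_{\breve{\mu}}\breve{\mathbf{F}}_{\breve{\mu}}=\mathbf{I}_n$ from Lemma~\ref{prop:QFM}, and sum against the real scalars $c^{(k)}$.
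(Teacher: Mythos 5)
Your proof is correct, but it takes a different (and more explicit) route than the paper's. The paper writes $\mathbf{C}=c^{(0)}\mathbf{I}_n+c^{(1)}\mathbf{J}+\cdots+c^{(n-1)}\mathbf{J}^{n-1}$ with $\mathbf{J}$ the basic cyclic shift, observes that the eigenvalues of $\mathbf{J}$ are the powers of $\breve{\omega}$ and that the columns of $\breve{\mathbf{F}}^*_{\breve{\mu}}$ are a common eigenvector matrix, and then asserts the diagonal form --- essentially the ``cleaner route'' you sketch in your final sentence, though the paper leaves the verification steps (``we can check'', ``it is easy to deduce'') implicit. Your primary argument instead computes $[\breve{\mathbf{F}}_{\breve{\mu}}\mathbf{C}\breve{\mathbf{F}}^*_{\breve{\mu}}]_{st}$ entrywise and factors the double sum, which has the advantage of isolating precisely the two ingredients that make the complex argument survive in the quaternion setting: the entries $c^{(w)}$ are real (hence central) and every exponential involved is a power of the single quaternion $\breve{\omega}$, so all factors commute. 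That observation is exactly the right diagnostic for why genuinely quaternionic circulants fail to diagonalize and only block-diagonalize in Theorem~\ref{Thm:Sec2-1}, so your write-up arguably documents the mechanism better than the paper's; the paper's version is shorter and reuses the familiar eigenstructure of $\mathbf{J}$. Both are sound, and your handling of the geometric series (invertibility of $\breve{\omega}^{s-t}-1$ in the division ring when $n\nmid(s-t)$) matches the argument already used in Lemma~\ref{prop:QFM}.
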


\begin{proof}
It is well known that any real circulant matrix $\mathbf{C}=\text{circ}
(c^{(0)},c^{(1)},\cdots,c^{(n-1)})$ can be represented as 
\begin{equation}
\mathbf{C}=c^{(0)}\mathbf{I}_n+c^{(1)}\mathbf{J}+c^{(2)}(\mathbf{J})^2+
\cdots+c^{(n-1)}(\mathbf{J})^{n-1},
\end{equation}
where 
\begin{equation}\label{J}
\mathbf{J}=
  \left(
           \begin{array}{ccccc}
            0 & 0& \cdots& 0&1\\
            1& 0&\cdots&0&0\\
             0 &1& \cdots&0&0\\
             \vdots&\vdots&\ddots&\vdots&\vdots\\
             0&0&\cdots&1&0
           \end{array}
         \right).
\end{equation}   
Note ${\mathbf J}$ is also a circulant matrix with $\text{circ}
(0,1,0,\cdots,0)$.
Let $\breve{\lambda}$ be the eigenvalues of $\mathbf{J}$, then 
$c^{(0)}+c^{(1)} \breve{\lambda}+c^{(2)} \breve{\lambda}^2+\cdots+c^{(n-1)}
\breve{\lambda}^{n-1}$ are also the eigenvalues of $\mathbf{C}$.
From ${\rm det}(\breve{\lambda} \mathbf{I}_n-\mathbf{J})=0$, we have 
$\breve{\lambda}^n-1=0$.
It implies that $\breve{\lambda}=1,
\breve{\omega}, \breve{\omega}^2,\cdots, \breve{\omega}^{n-1}$ where  
$\breve{\omega}=\exp \left ( {\displaystyle \frac{-2 \pi \breve{\mu}}{ n}} \right )$ and $\breve{\mu}$ is a unit pure quaternion.
We can check 
$\breve{\mathbf{F}}^*_{\breve{\mu}}$ are eigenvector matrix of 
$\mathbf{C}$. It is easy to deduce that $\breve{\mathbf{F}}_{\breve{\mu}} 
\breve{\mathbf{C}} \breve{\mathbf{F}}^*_{\breve{\mu}}= \Diag(\sqrt{n}
\breve{\mathbf{F}}_{\breve{\mu}} \mathbf{c})$. The results follow.
\end{proof}

It is well-known that $\breve{\mathbf{F}}^*_{\tt{i}}$ is the
eigenvector matrix of $\mathbf{C}$. In Lemma \ref{lem:6}, we indeed demonstrate 
that $\breve{\mathbf{F}}^*_{\breve{\mu}}$ are also eigenvector 
matrix of $\mathbf{C}$ when $\breve{\mu}$ is a unit pure quaternion.

\begin{theorem}\label{Thm:Sec2-1}
Given a quaternion circulant matrix in unit pure quaternion 
three-axis system $(\breve{\mu},\breve{\alpha},\breve{\beta})$, 
$$
\breve{\mathbf{S}}=\mathbf{S}_0+\mathbf{S}_1\breve{\mu}+\mathbf{S}_2\breve{\alpha}+\mathbf{S}_3\breve{\beta}.
$$
Then $\mathbf{P} \breve{\mathbf{F}}_{\breve{\mu}}$ can block-diagonalize $\breve{\mathbf{S}}$, where
$\mathbf{P}$ is 
a permutation matrix $\mathbf{P}$ 
by the exchanging of $k$-th and $(n-k+3)$-th rows for $k=3,4,\ldots,\frac{n+1}{2}$
when $n$ is odd; 
by the exchanging of $k$-th and $(n-k+3)$-th rows for $k=3,4,\ldots,\frac{n}{2}$
when $n$ is even. 
More precisely, for odd $n$,
$\mathbf{P} \breve{\mathbf{F}}_{\breve{\mu}}\breve{\mathbf{S}}
\breve{\mathbf{F}}^*_{\breve{\mu}} \mathbf{P}^*$ is a diagonal block matrix
where the first diagonal block is 1-by-1 matrix, and 
the other diagonal blocks are 2-by-2 matrices; for even $n$, except for the above diagonal blocks, $\mathbf{P} \breve{\mathbf{F}}_{\breve{\mu}}\breve{\mathbf{S}}
\breve{\mathbf{F}}^*_{\breve{\mu}} \mathbf{P}^*$ has one more 1-by-1 matrix located in the last diagonal block. 
\end{theorem}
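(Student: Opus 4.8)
The plan is to compute $\breve{\mathbf{F}}_{\breve{\mu}}\breve{\mathbf{S}}\breve{\mathbf{F}}^*_{\breve{\mu}}$ in closed form, recognize that it is exactly the pattern (\ref{1}), and then tidy it up with the permutation $\mathbf{P}$. First I would split $\breve{\mathbf{S}}$ along the first axis of the three-axis system: since $\breve{\beta}=\breve{\mu}\breve{\alpha}$ by Property \ref{prop:pureQ}, write
$$
\breve{\mathbf{S}}=\breve{\mathbf{X}}+\breve{\mathbf{Y}}\breve{\alpha},\qquad \breve{\mathbf{X}}:=\mathbf{S}_0+\mathbf{S}_1\breve{\mu},\quad \breve{\mathbf{Y}}:=\mathbf{S}_2+\mathbf{S}_3\breve{\mu},
$$
so that both $\breve{\mathbf{X}}$ and $\breve{\mathbf{Y}}$ lie in the commutative subfield generated by $\breve{\mu}$ and have real circulant coefficient matrices $\mathbf{S}_\ell$.

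For the $\breve{\mathbf{X}}$-part, $\breve{\mu}$ commutes with $\breve{\mathbf{F}}_{\breve{\mu}}$, with $\breve{\mathbf{F}}^*_{\breve{\mu}}$ and with real matrices (the first identity of Lemma \ref{lem:qua-commu}), so $\breve{\mathbf{F}}_{\breve{\mu}}\breve{\mathbf{X}}\breve{\mathbf{F}}^*_{\breve{\mu}}=(\breve{\mathbf{F}}_{\breve{\mu}}\mathbf{S}_0\breve{\mathbf{F}}^*_{\breve{\mu}})+(\breve{\mathbf{F}}_{\breve{\mu}}\mathbf{S}_1\breve{\mathbf{F}}^*_{\breve{\mu}})\breve{\mu}=:\mathbf{\Lambda}_1$, which is diagonal because each $\mathbf{S}_\ell$ is real circulant and hence is diagonalized by $\breve{\mathbf{F}}_{\breve{\mu}}$ (Lemma 6). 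For the $\breve{\mathbf{Y}}\breve{\alpha}$-part I would push $\breve{\alpha}$ to the right through $\breve{\mathbf{F}}^*_{\breve{\mu}}$: by the second identity of Lemma \ref{lem:qua-commu}, $\breve{\alpha}\breve{\mathbf{F}}^*_{\breve{\mu}}=(\breve{\mathbf{F}}_{\breve{\mu}})^{T}\breve{\alpha}=\breve{\mathbf{F}}_{\breve{\mu}}\breve{\alpha}$, the last equality because $\breve{\mathbf{F}}_{\breve{\mu}}$ is symmetric; hence $\breve{\mathbf{F}}_{\breve{\mu}}\breve{\mathbf{Y}}\breve{\alpha}\breve{\mathbf{F}}^*_{\breve{\mu}}=\breve{\mathbf{F}}_{\breve{\mu}}\breve{\mathbf{Y}}\breve{\mathbf{F}}_{\breve{\mu}}\breve{\alpha}$. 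The one slightly delicate point is that this expression has two \emph{un-conjugated} Fourier factors. Using $\breve{\mathbf{F}}^2_{\breve{\mu}}=\mathbf{A}$ and $\breve{\mathbf{F}}^*_{\breve{\mu}}\breve{\mathbf{F}}_{\breve{\mu}}=\mathbf{I}_n$ from Lemma \ref{prop:QFM} one has $\breve{\mathbf{F}}_{\breve{\mu}}=\breve{\mathbf{F}}^*_{\breve{\mu}}\mathbf{A}$, so replacing the right factor gives $\breve{\mathbf{F}}_{\breve{\mu}}\breve{\mathbf{Y}}\breve{\mathbf{F}}_{\breve{\mu}}=(\breve{\mathbf{F}}_{\breve{\mu}}\breve{\mathbf{Y}}\breve{\mathbf{F}}^*_{\breve{\mu}})\mathbf{A}=:\mathbf{\Lambda}_2\mathbf{A}$ with $\mathbf{\Lambda}_2=(\breve{\mathbf{F}}_{\breve{\mu}}\mathbf{S}_2\breve{\mathbf{F}}^*_{\breve{\mu}})+(\breve{\mathbf{F}}_{\breve{\mu}}\mathbf{S}_3\breve{\mathbf{F}}^*_{\breve{\mu}})\breve{\mu}$ again diagonal. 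Therefore
$$
\breve{\mathbf{F}}_{\breve{\mu}}\breve{\mathbf{S}}\breve{\mathbf{F}}^*_{\breve{\mu}}=\mathbf{\Lambda}_1+\mathbf{\Lambda}_2\mathbf{A}\,\breve{\alpha},
$$
and since $\mathbf{A}$ in (\ref{P}) has nonzero entries only at $(1,1)$ and at the anti-diagonal positions $(s,n+2-s)$, $s=2,\dots,n$, the matrix $\breve{\mathbf{F}}_{\breve{\mu}}\breve{\mathbf{S}}\breve{\mathbf{F}}^*_{\breve{\mu}}$ has precisely the main-diagonal-plus-anti-lower-subdiagonal sparsity displayed in (\ref{1}).

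Finally I would read off the block structure. In $\mathbf{\Lambda}_1+\mathbf{\Lambda}_2\mathbf{A}\breve{\alpha}$ the index $s$ is linked by an off-diagonal entry only to the index $n+2-s$, so index $s$ and index $n+2-s$ together with their diagonal entries form a closed $2\times 2$ coupling, unless $s=n+2-s$ in which case only a diagonal entry survives. Conjugating by the permutation $\mathbf{P}$, which relocates each index so that $s$ sits next to $n+2-s$, therefore produces a block matrix: each two-element orbit $\{s,n+2-s\}$ gives a $2\times 2$ block and each fixed point gives a $1\times 1$ block. The fixed point $s=1$ always occurs (the leading $1\times 1$ block); when $n$ is even there is the extra fixed point $s=n/2+1$, which ends up in the last block after reordering; all other indices are paired, yielding $(n-1)/2$ blocks of size $2$ for odd $n$ and $(n-2)/2$ for even $n$. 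Checking that the stated row-exchanges realize this pairing, and counting the blocks, is then routine index bookkeeping. I expect the only genuine obstacle to be the non-commutative step above — correctly tracking how $\breve{\alpha}$ turns $\breve{\mathbf{F}}^*_{\breve{\mu}}$ into $\breve{\mathbf{F}}_{\breve{\mu}}$ and then using $\breve{\mathbf{F}}^2_{\breve{\mu}}=\mathbf{A}$ to bring the result back to diagonal-times-reversal form; once the identity $\breve{\mathbf{F}}_{\breve{\mu}}\breve{\mathbf{S}}\breve{\mathbf{F}}^*_{\breve{\mu}}=\mathbf{\Lambda}_1+\mathbf{\Lambda}_2\mathbf{A}\breve{\alpha}$ is established, the block-diagonalization is essentially combinatorial.
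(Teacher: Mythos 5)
Your proposal is correct and follows essentially the same route as the paper's proof: both reduce to the identity $\breve{\mathbf{F}}_{\breve{\mu}}\breve{\mathbf{S}}\breve{\mathbf{F}}^*_{\breve{\mu}}=\breve{\mathbf{\Lambda}}_0+\breve{\mathbf{\Lambda}}_1\breve{\mu}+\breve{\mathbf{\Lambda}}_2\mathbf{A}\breve{\alpha}+\breve{\mathbf{\Lambda}}_3\mathbf{A}\breve{\beta}$ by combining Lemma \ref{lem:qua-commu} (to move $\breve{\alpha}$, $\breve{\beta}$ past $\breve{\mathbf{F}}^*_{\breve{\mu}}$) with $\breve{\mathbf{F}}^2_{\breve{\mu}}=\mathbf{A}$ from Lemma \ref{prop:QFM}, and then read off the diagonal-plus-anti-subdiagonal pattern and permute. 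Your regrouping of the $\breve{\alpha}$ and $\breve{\beta}$ terms into a single $\breve{\mathbf{Y}}\breve{\alpha}$ via $\breve{\beta}=\breve{\mu}\breve{\alpha}$ is only a cosmetic repackaging of the same computation.
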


\begin{proof}
Note that 
$\{\mathbf{S}_l\}^3_{l=0}$ are real circulant matrices. Let $\mathbf{S}_l=\text{circ}(\mathbf{s}_l)$, $l=0,1,2,3$. Then from Lemma \ref{lem:qua-commu} and Lemma \ref{prop:QFM}, we have
\begin{eqnarray*}
\breve{\mathbf{F}}_{\breve{\mu}}\breve{\mathbf{S}}
\breve{\mathbf{F}}^*_{\breve{\mu}}&=& 
\breve{\mathbf{F}}_{\breve{\mu}}(\mathbf{S}_0+\mathbf{S}_1\breve{\mu}+\mathbf{S}_2\breve{\alpha}+\mathbf{S}_3\breve{\beta})
\breve{\mathbf{F}}^*_{\breve{\mu}}\\
&=& 
\breve{\mathbf{F}}_{\breve{\mu}}\mathbf{S}_0 \breve{\mathbf{F}}^*_{\breve{\mu}}+
\breve{\mathbf{F}}_{\breve{\mu}}\mathbf{S}_1 \breve{\mu}
\breve{\mathbf{F}}^*_{\breve{\mu}}+ 
\breve{\mathbf{F}}_{\breve{\mu}}\mathbf{S}_2\breve{\alpha}
\breve{\mathbf{F}}^*_{\breve{\mu}}+ 
\breve{\mathbf{F}}_{\breve{\mu}}\mathbf{S}_3\breve{\beta}
\breve{\mathbf{F}}^*_{\breve{\mu}}\\
&=& 
\breve{\mathbf{F}}_{\breve{\mu}}\mathbf{S}_0
\breve{\mathbf{F}}^*_{\breve{\mu}}+
( \breve{\mathbf{F}}_{\breve{\mu}}\mathbf{S}_1
\breve{\mathbf{F}}^*_{\breve{\mu}})
\breve{\mathbf{F}}_{\breve{\mu}}\breve{\mu}
\breve{\mathbf{F}}^*_{\breve{\mu}}+ (
\breve{\mathbf{F}}_{\breve{\mu}}\mathbf{S}_2
\breve{\mathbf{F}}^*_{\breve{\mu}})
\breve{\mathbf{F}}_{\breve{\mu}}
\breve{\alpha}
\breve{\mathbf{F}}^*_{\breve{\mu}}+ (
\breve{\mathbf{F}}_{\breve{\mu}}\mathbf{S}_3
\breve{\mathbf{F}}^*_{\breve{\mu}})
\breve{\mathbf{F}}_{\breve{\mu}}\breve{\beta}
\breve{\mathbf{F}}^*_{\breve{\mu}}\\
&=&\breve{\mathbf{\Lambda}}_0+
\breve{\mathbf{\Lambda}}_1 \breve{\mu}+
\breve{\mathbf{\Lambda}}_2 
\breve{\mathbf{F}}^2_{\breve{\mu}}\breve{\alpha}+
\breve{\mathbf{\Lambda}}_3
\breve{\mathbf{F}}^2_{\breve{\mu}}\breve{\beta}\\
&=&
\breve{\mathbf{\Lambda}}_0+
\breve{\mathbf{\Lambda}}_1\breve{\mu}+
\breve{\mathbf{\Lambda}}_2\mathbf{A}\breve{\alpha}+
\breve{\mathbf{\Lambda}}_3\mathbf{A}\breve{\beta}\doteq \breve{\mathbf{\Lambda}},
\end{eqnarray*}
where $
\breve{\mathbf{\Lambda}}_l=\Diag(\sqrt{n}
\breve{\mathbf{F}}_{\breve{\mu}} \mathbf{s}_l)$, $l=0,1,2,3$.
It is clear that the non-zero patterns of  
$\breve{\mathbf{\Lambda}}_2\mathbf{A}$
and 
$\breve{\mathbf{\Lambda}}_3\mathbf{A}$
are the same as that of $\mathbf{A}$.
Note that the nonzero entries of
$\breve{\mathbf{\Lambda}}$ can only appear 
in the main diagonal and anti-lower-subdiagonal locations. 
The structure of $\breve{\mathbf{\Lambda}}$ is given in (\ref{1}).
According to such structure, we can swap the 
$k$-th and $(n-k+3)$-th rows and columns for $k=3,4,\ldots,\frac{n}{2}$
when $n$ is even; or  
swap the 
$k$-th and $(n-k+3)$-th rows and columns for $k=3,4,\ldots,\frac{n+1}{2}$
when $n$ is odd. In this way, we obtain 
1-by-1 block in the first main diagonal position,
and 2-by-2 block in other diagonal positions when $n$ is odd; and for even $n$, except the above blocks, there is one more 1-by-1 matrix located in the last diagonal block. 
Equivalently, we just respectively apply $\mathbf{P}$ and $\mathbf{P}^*$ on 
the left-hand and the right-hand sides of 
$\breve{\mathbf{\Lambda}}_0+
\breve{\mathbf{\Lambda}}_1\breve{\mu}+
\breve{\mathbf{\Lambda}}_2\mathbf{A}\breve{\alpha}+
\breve{\mathbf{\Lambda}}_3\mathbf{A}\breve{\beta}$.
\end{proof}

\begin{remark}
We remark that $(\tt{i},\tt{j},\tt{k})$ are unit pure quaternion three-axis system. For any quaternion circulant matrix $\breve{\mathbf{C}}$ 
represented in $(\tt{i},\tt{j},\tt{k})$, the standard discrete Fourier transform matrix $\breve{\mathbf{F}}_{\tt i}$ can block-diagonalize $\breve{\mathbf{C}}$ due to Theorem \ref{Thm:Sec2-1}. 
\end{remark}

\begin{remark}
Theorem \ref{Thm:Sec2-1} tells us that a quaternion circulant matrix in unit pure quaternion axis $(\breve{\mu},\breve{\alpha},\breve{\beta})$ can be block-diagonalized by $\breve{\mathbf{F}}_{\breve{\mu}}$. However, given $\breve{\mu}$, one can have many choices for $(\breve{\alpha},\breve{\beta})$ to form unit pure quaternion three-axis system. Note that 
$\breve{\mathbf{F}}_{\breve{\mu}}\breve{\mathbf{S}}\breve{\mathbf{F}}^*_{\breve{\mu}}= \breve{\mathbf{F}}_{\breve{\mu}}\breve{\mathbf{C}}
\breve{\mathbf{F}}^*_{\breve{\mu}}$ is 
valid for $\breve{\mathbf{C}}$ that is represented in three-axis system 
$(\tt{i},\tt{j},\tt{k})$. It is easy to derive that
when $\breve{\mu}$ is fixed, different choices of $(\breve{\alpha},\breve{\beta})$ would not change the block-diagonalized structure of $\breve{\mathbf{\Lambda}}$, though the values of $(\breve{\mathbf{\Lambda}}_2,\breve{\mathbf{\Lambda}}_3)$ would be different.
\end{remark}

\begin{remark}
The trivial corollary is that $\breve{\mathbf{C}}$ without entries in 
$\tt{j}$ and $\tt{k}$ can be diagonalized by $\breve{\mathbf{F}}_{\tt i}$. 
\end{remark}
 
The block-diagonalization procedure of a quaternion circulant matrix is presented in Algorithm 1. The cost of computing 
discrete quaternion Fourier matrix on a $n$-vector 
is of $O(n \log n)$ operations,
see \cite{Pei}, and the computational complexity of the 
whole block-diagonalization of 
an $n$-by-$n$ quaternion circulant matrix 
is of $O(n \log n)$ operations. 
When $\breve{\mathbf{C}}$ is invertible, and the cost 
of computing $\breve{\mathbf{C}}^{-1} \breve{\mathbf{x}}$ is also 
of $O(n \log n)$ operations. We just note that 
$$
\breve{\mathbf{C}}^{-1} \breve{\mathbf{x}} = 
\breve{\mathbf{F}}_{\breve{\mu}}^* 
\mathbf{P}^* \breve{\mathbf{\Lambda}}^{-1} 
 \mathbf{P}\breve{\mathbf{F}}_{\breve{\mu}} \breve{\mathbf{x}},
$$
and therefore the computation involves block-diagonalization of 
$\breve{\mathbf{C}}$, quaternion FFTs and the inverse of 2-by-2 matrices.

\begin{algorithm}
\caption{Block-diagonalization of Quaternion Circulant Matrix \label{algo:qft-circ}}
\begin{algorithmic}[1]
\REQUIRE Given quaternion circulant matrix $\breve{\mathbf{C}}= \mathbf{C}_0+\mathbf{C}_1 \tt{i}+\mathbf{C}_2 \tt{j}+\mathbf{C}_3 \tt{k} \in \mathbb{Q}^{n\times n}$;  quaternion Fourier transform matrix $\breve{\mathbf{F}}_{\breve{\mu}}$, 

\ENSURE Block-diagonalization of quaternion circulant matrix $\breve{\mathbf{\Lambda}}$.
 
\STATE   Generate unit pure quaternion three-axis system $(\breve{\mu},\breve{\alpha},\breve{\beta})$.

\STATE Rewrite quaternion circulant matrix $\breve{\mathbf{C}}$ in  axis $(\breve{\mu},\breve{\alpha},\breve{\beta})$.\\
$\mathbf{S}_0=\mathbf{C}_0$;\\
$\mathbf{S}_1=\mu_1\mathbf{C}_1+\mu_2\mathbf{C}_2+\mu_3\mathbf{C}_3$;\\
$\mathbf{S}_2=\alpha_1\mathbf{C}_1+\alpha_2\mathbf{C}_2+\alpha_3\mathbf{C}_3$;\\
$\mathbf{S}_3=\beta_1\mathbf{C}_1+\beta_2\mathbf{C}_2+\beta_3\mathbf{C}_3$; 
 
\STATE QFFT on $\breve{\mathbf{S}}= \mathbf{S}_0+\mathbf{S}_1 \breve{\mu}+\mathbf{S}_2 \breve{\alpha}+\mathbf{S}_3 \breve{\beta}$.\\
$\breve{\mathbf{\Lambda}}_0=\Diag(\sqrt{n}\breve{\mathbf{F}}_{\breve{\mu}}\mathbf{s}_0); $ 
$\quad \breve{\mathbf{\Lambda}}_1=\Diag(\sqrt{n}\breve{\mathbf{F}}_{\breve{\mu}}\mathbf{s}_1)$; \\ 
$\breve{\mathbf{\Lambda}}_2=\Diag(\sqrt{n}\breve{\mathbf{F}}_{\breve{\mu}}\mathbf{s}_2)$;  
$\quad \breve{\mathbf{\Lambda}}_3=\Diag(\sqrt{n}\breve{\mathbf{F}}_{\breve{\mu}}\mathbf{s}_3)$.

\STATE  $\breve{\mathbf{\Lambda}}=\breve{\mathbf{\Lambda}}_0+\breve{\mathbf{\Lambda}}_1\breve{\mu}+\breve{\mathbf{\Lambda}}_2\mathbf{A}\breve{\alpha}+\breve{\mathbf{\Lambda}}_3\mathbf{A}\breve{\beta}$. 
 
\end{algorithmic}
\end{algorithm}
We remark that Step 3 in Algorithm \ref{algo:qft-circ} can be simply implemented by using MATLAB toolbox QTFM \footnote{https://qtfm.sourceforge.io/}. Precisely, 
$$
\breve{\bm{\lambda}}_l=\text{qfft}(\mathbf{s}_l,\breve{\mu}, \text{'L'}),\quad l=0,1,2,3,
$$
where $\breve{\bm{\lambda}}_l$ is the main diagonal of $\breve{\mathbf{\Lambda}}_l$.

In the next section, we will make use of the results to study quaternion
tensor singular value decomposition and its algebraic structure.
In Section 4, we will demonstrate our proposed block diagonalization results are 
efficient in linear prediction of quaternion signal processing.

\section{Quaternion Tensor Singular Value Decomposition}

In this section, we will apply the block-diagonalization results of quaternion circulant matrices to study quaternion tensor singular value decomposition
so that we can use the decomposition for color videos which are represented as third-order quaternion tensors.

\subsection{Tensor Singular Value Decomposition}

Firstly we will briefly review the background and introduce the operations used in tensor decomposition.  To exploit the inherent structure of tensors, Kilmer et al. first studied an operator named tensor-tensor product ($t$-product) in \cite{kilmer2011factorization}. The operator built based on the complex 
Fourier transform gives a new interpretation of the complex third-order tensors on the oriented matrix space. 
Tensor Singular Value Decomposition (T-SVD) and its associated rank called tubal rank were then proposed based on the $t$-product. The new decomposition can well characterize the inherent low-rank structure of complex third-order tensors.
For real/complex tensors, researchers
\cite{kernfeld2015tensor,martin2013order,song2019robust} studied T-SVD based on cosine transform, and other variants based on the transforms which are invertible. As expected, due to its characteristics, T-SVD shows great advantages in capturing spatial-shift correlations in real-world data, especially in image deblurring and completion problems 
  \cite{ kilmer2013third,martin2013order,zhang2014novel, zhang2016exact,
jiang2019robust,song2019robust,zhang2021low,zhou2017tensor}.

In the following, we will review some basic definitions from \cite{kilmer2011factorization,kilmer2013third} for the complex T-SVD.

\begin{defn}[\cite{kilmer2011factorization,kilmer2013third}]
Given a complex tensor $\mathcal{T}$ of $n_1\times n_2\times m$, then $\bcirc(\mathcal{T})$
and $\unfold(\mathcal{T})$ are defined respectively as follows:
$$
\bcirc(\mathcal{T})= 
  \left(
           \begin{array}{ccccc}
            \mathbf{T}_1 & \mathbf{T}_m & \cdots &\mathbf{T}_3 &\mathbf{T}_2 \\
            \mathbf{T}_2 & \mathbf{T}_1 &\cdots &  &\mathbf{T}_3\\
           \vdots&\ddots&\ddots&\ddots&\vdots\\
      \mathbf{T}_{m-1} & &\cdots&\mathbf{T}_1&\mathbf{T}_m \\
						\mathbf{T}_m&\mathbf{T}_{m-1}&\cdots&\mathbf{T}_2&\mathbf{T}_1
           \end{array}
         \right),\quad \unfold(\mathcal{T})= \left(
           \begin{array}{c}
           \mathbf{T}_1 \\
            \mathbf{T}_2\\
           \vdots\\
            \mathbf{T}_m 
           \end{array}
         \right);            
$$
where $\{\mathbf{T}_t\}^m_{t=1}$ are frontal slices of tensor, i.e., $\mathbf{T}_t=\mathcal{T}(:,:,t)$ for $t=1,2,\cdots,m$. We use $``\fold"$ to return frontal slices $\{\mathbf{T}_t\}^m_{t=1}$ to tensor $\mathcal{T}$, precisely, $\fold(\unfold(\mathcal{T}))=\mathcal{T}$. $``\Bdiag"$ returns a block-diagonal matrix with the frontal slices of tensor $\mathcal{T}$ in the diagonal, specifically,
$$
\Bdiag(\mathcal{T})\doteq \Diag(\mathbf{T}_1,
\cdots,\mathbf{T}_m)
= \left ( 
\begin{array}{cccc}
 \mathbf{T}_1 & & & 0\\
          &   \mathbf{T}_2 & & \\
        & &   \ddots & \\
       0 & & &      \mathbf{T}_m \\
\end{array}
\right ).
$$
\end{defn}

\begin{defn}[t-product\cite{kilmer2011factorization,kilmer2013third}]
Let $\mathcal{T}$ be a complex tensor of $n_1\times n_2\times m $ and 
$\mathcal{B}$ be a complex tensor of $n_2\times n_3\times m$, then the $t$-product $\mathcal{T}\star \mathcal{B}$ is a complex tensor of $n_1\times n_3 \times m$, that is
\begin{equation}
\mathcal{T}\star \mathcal{B} = \fold(\bcirc(\mathcal{T}) \unfold(\mathcal{B})).
\end{equation}
\end{defn}

 Note that the $t$-product in the real or complex case 
can be described as the multiplication 
of a block circulant matrix and a block matrix column. In this section,
we are interested in the case of quaternion numbers, it is valid to 
describe the $t$-product as the multiplication of a block
quaternion circulant matrix and a block quaternion matrix column by
using the proposed block diagonalization results.
It is equivalent to applying the quaternion Fourier transform into the tubes of the third mode
of a quaternion tensor.

\begin{defn}[Identity tensor\cite{kilmer2011factorization,kilmer2013third}]
We call a tensor $\mathcal{I}$ of $n\times n\times m$ identity tensor if its first frontal slice is $n\times n$ identity matrix, and other frontal slices are all zeros.
\end{defn}

\begin{defn}[Conjugate transpose\cite{kilmer2011factorization,kilmer2013third}]
Given a complex tensor $\mathcal{B}$ of $n_1\times n_2\times m$, then its conjugate transpose $\mathcal{B}^*$ is tensor of $n_2\times n_1\times m$ obtained by conjugate transposing each of the frontal slices and then reversing the order of transposed frontal slices from $2$ through $m$.
\end{defn}

\begin{defn}[Orthogonal tensor\cite{kilmer2011factorization,kilmer2013third}]
A tensor $\mathcal{Q}$ of $n\times n\times m$ is unitary if $\mathcal{Q}^*\star \mathcal{Q}=\mathcal{Q}\star \mathcal{Q}^*=\mathcal{I}$.
\end{defn}

\begin{theorem}[T-SVD\cite{kilmer2011factorization,kilmer2013third}]
Given a tensor $\mathcal{T}\in \mathbb{C}^{n_1\times n_2\times m}$, then $\mathcal{T}$ can be factorized as
\begin{equation}
\mathcal{T}=\mathcal{U}\star \mathcal{S} \star \mathcal{V}^*,
\end{equation}
where $\mathcal{U}\in \mathbb{C}^{n_1\times n_1\times m}$, $\mathcal{V}\in \mathbb{C}^{n_2\times n_2\times m}$ are unitary tensors, and $\mathcal{S}\in \mathbb{C}^{n_1\times n_2\times m}$ is a diagonal tensor.
\end{theorem}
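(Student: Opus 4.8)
The plan is to reduce the tensor factorization to ordinary matrix singular value decompositions performed slice-by-slice in the Fourier domain, exploiting the fact that $\mathrm{bcirc}(\mathcal{T})$ is a block circulant matrix. First I would record the block analogue of the scalar circulant diagonalization recalled in Section 1.2: a block circulant matrix with $n_1\times n_2$ blocks is block-diagonalized by the normalized discrete Fourier matrix $\mathbf{F}_m$ acting on the block index, i.e.
$$(\mathbf{F}_m\otimes\mathbf{I}_{n_1})\,\mathrm{bcirc}(\mathcal{T})\,(\mathbf{F}_m^*\otimes\mathbf{I}_{n_2})=\mathrm{Bdiag}(\widehat{\mathcal{T}}),$$
where the frontal slices $\widehat{\mathbf{T}}_t=\widehat{\mathcal{T}}(:,:,t)$ are obtained by applying the DFT along the third mode of $\mathcal{T}$. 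This is proved exactly as in the scalar case (or cited from \cite{kilmer2011factorization,kilmer2013third}), together with the two structural facts $\mathrm{bcirc}(\mathcal{A}\star\mathcal{B})=\mathrm{bcirc}(\mathcal{A})\,\mathrm{bcirc}(\mathcal{B})$ and $\mathrm{bcirc}(\mathcal{B}^*)=\mathrm{bcirc}(\mathcal{B})^*$, the latter being where the order-reversal of frontal slices $2,\dots,m$ in the definition of the tensor conjugate transpose is used.

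Next I would take, for each $t=1,\dots,m$, a matrix SVD $\widehat{\mathbf{T}}_t=\widehat{\mathbf{U}}_t\widehat{\mathbf{S}}_t\widehat{\mathbf{V}}_t^*$ with $\widehat{\mathbf{U}}_t\in\mathbb{C}^{n_1\times n_1}$, $\widehat{\mathbf{V}}_t\in\mathbb{C}^{n_2\times n_2}$ unitary and $\widehat{\mathbf{S}}_t\in\mathbb{R}^{n_1\times n_2}$ diagonal with nonnegative entries. Stacking these slices gives Fourier-domain tensors $\widehat{\mathcal{U}},\widehat{\mathcal{S}},\widehat{\mathcal{V}}$, and I define $\mathcal{U},\mathcal{S},\mathcal{V}$ by applying the inverse DFT along the third mode. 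Since $\mathrm{Bdiag}(\widehat{\mathcal{T}})=\mathrm{Bdiag}(\widehat{\mathcal{U}})\,\mathrm{Bdiag}(\widehat{\mathcal{S}})\,\mathrm{Bdiag}(\widehat{\mathcal{V}})^*$ holds blockwise, conjugating back by $\mathbf{F}_m^*\otimes\mathbf{I}$ and inserting $(\mathbf{F}_m\otimes\mathbf{I})(\mathbf{F}_m^*\otimes\mathbf{I})=\mathbf{I}$ between the factors yields $\mathrm{bcirc}(\mathcal{T})=\mathrm{bcirc}(\mathcal{U})\,\mathrm{bcirc}(\mathcal{S})\,\mathrm{bcirc}(\mathcal{V})^*=\mathrm{bcirc}(\mathcal{U}\star\mathcal{S}\star\mathcal{V}^*)$ by the two structural identities above. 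Applying $\mathrm{fold}$ to the first block column of both sides (which is precisely the operation $\mathrm{fold}(\mathrm{bcirc}(\cdot)\,\mathrm{unfold}(\cdot))$ defining $\star$) gives $\mathcal{T}=\mathcal{U}\star\mathcal{S}\star\mathcal{V}^*$.

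It then remains to verify the structural requirements. For unitarity of $\mathcal{U}$: in the Fourier domain $\mathrm{Bdiag}(\widehat{\mathcal{U}})^*\mathrm{Bdiag}(\widehat{\mathcal{U}})=\mathbf{I}$ blockwise, hence $\mathrm{bcirc}(\mathcal{U})^*\mathrm{bcirc}(\mathcal{U})=\mathbf{I}$, i.e. $\mathcal{U}^*\star\mathcal{U}=\mathcal{I}$, and symmetrically $\mathcal{U}\star\mathcal{U}^*=\mathcal{I}$; the same argument applies to $\mathcal{V}$. For $\mathcal{S}$: each Fourier slice $\widehat{\mathbf{S}}_t$ is diagonal, and the third-mode inverse DFT acts entrywise along the tube fibers, so each frontal slice of $\mathcal{S}$ is again a diagonal matrix, i.e. $\mathcal{S}$ is an f-diagonal tensor. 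The main obstacle I anticipate is purely bookkeeping: making the passage between tensor operations ($\star$, $\cdot^*$, $\mathrm{fold}$, $\mathrm{unfold}$) and their block-circulant matrix images fully rigorous, in particular establishing $\mathrm{bcirc}(\mathcal{B}^*)=\mathrm{bcirc}(\mathcal{B})^*$ in the presence of the slice-order reversal, and confirming that reading off the first block column is exactly the $\star$-product. Everything downstream is finite-dimensional linear algebra over $\mathbb{C}$, so no analytic difficulty arises.
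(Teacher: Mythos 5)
Your proposal is correct and is exactly the standard argument: the paper states this complex T-SVD theorem without proof (citing Kilmer et al.), but your Fourier-domain block-diagonalization followed by slice-wise matrix SVD is precisely the strategy the paper itself employs for its quaternion analogue (Lemma \ref{lem: tsvd_blockReal} and the proof of Theorem \ref{Thm3}), where the only added complication is that the quaternion DFT yields $2\times 2$ blocks rather than a full block-diagonalization. No gap; the bookkeeping identities $\text{bcirc}(\mathcal{A}\star\mathcal{B})=\text{bcirc}(\mathcal{A})\,\text{bcirc}(\mathcal{B})$ and $\text{bcirc}(\mathcal{B}^*)=\text{bcirc}(\mathcal{B})^*$ you flag are indeed the only points requiring care, and both hold with the stated slice-reversal convention.
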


Note that a diagonal tensor refers that each of its frontal slices is diagonal. 
The tensor tubal rank of a tensor $\mathcal{T}\in \mathbb{C}^{n_1\times n_2\times m}$, denoted as $\text{rank}(\mathcal{T})$, defined as the number of nonzero singular tubes of $\mathcal{S}$ that comes from T-SVD of $\mathcal{T}=\mathcal{U}\star \mathcal{S} \star \mathcal{V}^*$.

\subsection{Main Results}

We first establish the structure of the application of discrete 
quaternion Fourier transform matrix to each tube 
of a real third-order tensor.

\begin{lemma}\label{lem: tsvd_blockReal}
Given $\mathcal{T}\in\mathbb{R}^{n_1\times n_2\times m}$, and a discrete quaternion Fourier transform matrix $\breve{\mathbf{F}}_{\breve{\mu}}$,
\begin{equation}
(\breve{\mathbf{F}}_{\breve{\mu}} \otimes \mathbf{I}_{n_1}) 
\bcirc (\mathcal{T}) (\breve{\mathbf{F}}^*_{\breve{\mu}}\otimes \mathbf{I}_{n_2})= \Bdiag(\breve{\mathcal{D}}),
\end{equation}
where tensor $\breve{\mathcal{D}}$ is computed by applying 
				$\breve{\mathbf{F}}_{\breve{\mu}}$
				along each tube of $\mathcal{T}$, i.e., $\mathcal{T}(s,p,:)$ for $s=1,\cdots n_1$ and $p=1,\cdots n_2$.
\end{lemma}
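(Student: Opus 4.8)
The plan is to reduce the claimed block-diagonalization to the scalar case already handled in Lemma 6 (the one asserting $\breve{\mathbf{F}}_{\breve{\mu}}\mathbf{C}\breve{\mathbf{F}}^*_{\breve{\mu}}=\mathrm{Diag}(\sqrt{n}\breve{\mathbf{F}}_{\breve{\mu}}\mathbf{c})$ for real circulant $\mathbf{C}$), by exploiting the tensor/Kronecker structure of $\text{bcirc}(\mathcal{T})$. First I would observe that $\text{bcirc}(\mathcal{T})$ is a block-circulant matrix: if we let $\mathbf{J}_m\in\mathbb{R}^{m\times m}$ be the cyclic shift from~\eqref{J}, then $\text{bcirc}(\mathcal{T})=\sum_{t=1}^{m}\mathbf{J}_m^{\,t-1}\otimes\mathbf{T}_t$, where $\mathbf{T}_t=\mathcal{T}(:,:,t)\in\mathbb{R}^{n_1\times n_2}$ are the (real) frontal slices. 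This is the exact analogue of the real-circulant expansion used in the proof of Lemma 6, now with matrix coefficients sitting in the Kronecker slots.

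Next I would conjugate by $\breve{\mathbf{F}}_{\breve{\mu}}\otimes\mathbf{I}_{n_1}$ on the left and $\breve{\mathbf{F}}^*_{\breve{\mu}}\otimes\mathbf{I}_{n_2}$ on the right, and push the conjugation through the Kronecker product using the mixed-product property $(\mathbf{A}\otimes\mathbf{B})(\mathbf{C}\otimes\mathbf{D})=(\mathbf{A}\mathbf{C})\otimes(\mathbf{B}\mathbf{D})$. Because the coefficient matrices $\mathbf{T}_t$ are real, they commute past the scalar quaternion entries of $\breve{\mathbf{F}}_{\breve{\mu}}$, so
\begin{equation}
(\breve{\mathbf{F}}_{\breve{\mu}}\otimes\mathbf{I}_{n_1})\Big(\sum_{t=1}^{m}\mathbf{J}_m^{\,t-1}\otimes\mathbf{T}_t\Big)(\breve{\mathbf{F}}^*_{\breve{\mu}}\otimes\mathbf{I}_{n_2})=\sum_{t=1}^{m}\big(\breve{\mathbf{F}}_{\breve{\mu}}\mathbf{J}_m^{\,t-1}\breve{\mathbf{F}}^*_{\breve{\mu}}\big)\otimes\mathbf{T}_t.
\end{equation}
Here the only subtle point is the associativity/commutativity bookkeeping over $\mathbb{Q}$: the middle factor is a real circulant matrix, each entry of $\breve{\mathbf{F}}_{\breve{\mu}}$ is a quaternion lying in the commutative subfield $\mathbb{R}\oplus\mathbb{R}\breve{\mu}$, and $\mathbf{I}_{n_1}$, $\mathbf{T}_t$ are real, so every rearrangement above is legitimate and no ordering ambiguity arises. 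By Lemma 6 applied to the circulant matrix $\mathbf{J}_m^{\,t-1}=\mathrm{circ}(\mathbf{e}_t)$ (the $t$-th standard basis vector of $\mathbb{R}^m$), each $\breve{\mathbf{F}}_{\breve{\mu}}\mathbf{J}_m^{\,t-1}\breve{\mathbf{F}}^*_{\breve{\mu}}$ is a diagonal quaternion matrix, say $\breve{\mathbf{W}}_{t}=\mathrm{Diag}(\breve{w}_{1,t},\ldots,\breve{w}_{m,t})$ with $\breve{w}_{\ell,t}=[\sqrt{m}\,\breve{\mathbf{F}}_{\breve{\mu}}\mathbf{e}_t]_\ell$.

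Finally I would collect the sum slot-by-slot: the $(\ell,\ell)$ diagonal block of the Kronecker sum is $\sum_{t=1}^{m}\breve{w}_{\ell,t}\mathbf{T}_t=:\breve{\mathbf{D}}_\ell$, and all off-diagonal blocks vanish since each $\breve{\mathbf{W}}_t$ is diagonal. Hence the conjugated matrix equals $\mathrm{Diag}(\breve{\mathbf{D}}_1,\ldots,\breve{\mathbf{D}}_m)=\text{Bdiag}(\breve{\mathcal{D}})$. It remains only to identify $\breve{\mathbf{D}}_\ell$ with the tube-wise transform: for fixed $(s,p)$, the $(s,p)$ entry of $\breve{\mathbf{D}}_\ell$ is $\sum_{t=1}^{m}\breve{w}_{\ell,t}\,[\mathbf{T}_t]_{sp}=\big[\sqrt{m}\,\breve{\mathbf{F}}_{\breve{\mu}}\,\mathcal{T}(s,p,:)\big]_\ell$, which is precisely $\breve{\mathcal{D}}(s,p,\ell)$ as defined by applying $\breve{\mathbf{F}}_{\breve{\mu}}$ along each tube. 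The main obstacle is not any one computation but keeping the non-commutative algebra honest — specifically, verifying that moving the real coefficient matrices past the quaternion Fourier entries, and then past $\mathbf{A}=\breve{\mathbf{F}}_{\breve{\mu}}^2$ if one prefers to route through Lemma 5, introduces no transpose/conjugate twists of the kind seen in Lemma~\ref{lem:qua-commu}; since everything in sight beyond the $\breve{\mathbf{F}}_{\breve{\mu}}$-entries is real, this is safe, but it should be stated explicitly.
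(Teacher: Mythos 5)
Your proposal is correct and follows essentially the same route as the paper: both expand $\text{bcirc}(\mathcal{T})=\sum_{t=1}^{m}\mathbf{J}^{\,t-1}\otimes\mathbf{T}_t$, conjugate term-by-term through the Kronecker product (valid because the $\mathbf{T}_t$ are real), diagonalize each $\mathbf{J}^{\,t-1}$ via the quaternion DFT, and identify the resulting diagonal blocks $\sum_t \breve{\omega}^{(\ell-1)(t-1)}\mathbf{T}_t$ with the tube-wise transform of $\mathcal{T}$. Your explicit appeal to Lemma~6 with $\mathbf{J}^{\,t-1}=\mathrm{circ}(\mathbf{e}_t)$ and your remark on the commutativity bookkeeping are just slightly more detailed renderings of what the paper does by writing $\breve{\mathbf{\Lambda}}^{t}$ for the eigenvalue matrix of $\mathbf{J}^{t}$.
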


\begin{proof}
From $\bcirc(\mathcal{T})$, we have
$$
\bcirc(\mathcal{T})=(\mathbf{I}_m\otimes \mathbf{T}_1)+(\mathbf{J}\otimes \mathbf{T}_2)+\cdots+\big((\mathbf{J})^{m-1}\otimes \mathbf{T}_m\big),
$$
where $\mathbf{J}$ is defined in \eqref{J}. 
Let $\breve{\mathbf{\Lambda}}$ be the eigenvalue matrix of $\mathbf{J}$.  According to the derivation of Lemma \ref{lem:6}, we know that $\breve{\mathbf{\Lambda}}=\Diag\big(1,\breve{\omega},\cdots,(\breve{\omega})^{m-1}\big)$  with $\breve{\omega}=\exp \left ( {\displaystyle 
\frac{-2 \pi \breve{\mu}}{m}} \right )$, and  $\breve{\mathbf{F}}^*_{\breve{\mu}}$ is the eigenvector matrix of $\mathbf{J}$.
Note that 
$$
(\breve{\mathbf{F}}_{\breve{\mu}}\otimes \mathbf{I}_{n_1}) (\mathbf{I}_m\otimes \mathbf{T}_1) (\breve{\mathbf{F}}^*_{\breve{\mu}}\otimes \mathbf{I}_{n_2})=\mathbf{I}_m\otimes \mathbf{T}_1,
$$
$$
(\breve{\mathbf{F}}_{\breve{\mu}} \otimes \mathbf{I}_{n_1}) 
\big((\mathbf{J})^t\otimes \mathbf{T}_{t+1}\big) (\breve{\mathbf{F}}^*_{\breve{\mu}}\otimes \mathbf{I}_{n_2})=(\breve{\mathbf{\Lambda}})^t\otimes \mathbf{T}_{t+1},\quad \text{for}\quad t=1,2,\cdots,m-1.$$
Therefore, we obtain 
\begin{eqnarray}
 (\breve{\mathbf{F}}_{\breve{\mu}}\otimes \mathbf{I}_{n_1})\cdot\bcirc(\mathcal{T})\cdot (\breve{\mathbf{F}}^*_{\breve{\mu}}\otimes \mathbf{I}_{n_2}) 
=\mathbf{I}_m\otimes \mathbf{T}_1+\sum^{m-1}_{t=1}\big((\breve{\mathbf{\Lambda}})^t\otimes \mathbf{T}_{t+1}\big)=\Diag\left(
            \breve{\mathbf{D}}_1, \breve{\mathbf{D}}_2, 
						\ldots, \breve{\mathbf{D}}_m
         \right),
\end{eqnarray}
where $\breve{\mathbf{D}}_{t_1}=\mathbf{T}_1+\sum\limits^{m-1}_{t=1}(\breve{\omega}^{t_1-1})^{t}\mathbf{T}_{t+1}$,  $t_1=1,2,\cdots,m$.

On the other hand, we apply 
$\breve{\mathbf{F}}_{\breve{\mu}}$ along the third mode of 
${\cal T}$ and let the resulting tensor be $\breve{\mathcal{D}}$. Then we have
$$
\breve{\mathcal{D}}=\fold(\left(
 \begin{array}{c}
           \mathbf{T}_1+\mathbf{T}_2+\cdots+\mathbf{T}_m  \\
           \mathbf{T}_1+\breve{\omega}\mathbf{T}_2+\cdots+\breve{\omega}^{m-1}\mathbf{T}_m \\
           \vdots\\
            \mathbf{T}_1+(\breve{\omega})^{m-1}\mathbf{T}_2+\cdots+(\breve{\omega}^{m-1})^{m-1}\mathbf{T}_m 
           \end{array}
         \right)).
$$
It is easy to verify that $\breve{\mathbf{D}}_{t_1}$  $(t_1=1,2,\cdots,m)$ are the $t_1$-th frontal slice of tensor $\breve{\mathcal{D}}$. 
The results follow.
\end{proof}

Similar to Lemma 6, the main diagonal structure of 
applying discrete quaternion Fourier matrix to real tensor is preserved.

On the other hand, similar to Lemma \ref{lemm:Minmu}, any quaternion tensor $\breve{\mathcal{T}}$ in three-axis system 
$(\tt{i},\tt{j},\tt{k})$ can be represented in three-axis system $(\breve{\mu},\breve{\alpha},\breve{\beta})$.
Hence in the following, we will focus on quaternion tensors in three-axis system $(\breve{\mu},\breve{\alpha},\breve{\beta})$. 

\begin{theorem}\label{thm:QFT-block}
Given discrete quaternion Fourier transform matrix 
$\breve{\mathbf{F}}_{\breve{\mu}}$, for any quaternion tensor $\breve{\mathcal{T}}\in \mathbb{Q}^{n_1 \times n_2\times m}$ in three-axis system $(\breve{\mu},\breve{\alpha},\breve{\beta})$, i.e.,
$$
\breve{\mathcal{T}}=\mathcal{T}^{(0)}+\mathcal{T}^{(1)} \breve{\mu}+\mathcal{T}^{(2)} \breve{\alpha}+\mathcal{T}^{(3)} \breve{\beta},
$$
where $\mathcal{T}^{(l)}\in \mathbb{R}^{n_1\times n_2\times m}$, $l=0,1,2,3$. Then 
\begin{equation}
(\breve{\mathbf{F}}_{\breve{\mu}}\otimes \mathbf{I}_{n_1} )
\bcirc(\breve{\mathcal{T}}) (\breve{\mathbf{F}}^*_{\breve{\mu}}\otimes \mathbf{I}_{n_2})=  \Bdiag  (\breve{\mathcal{D}}^{(0)})+ \Bdiag (\breve{\mathcal{D}}^{(1)}) \breve{\mu}+\Bdiag  (\breve{\mathcal{D}}^{(2)}) \mathbf{Z}\breve{\alpha}+\Bdiag(\breve{\mathcal{D}}^{(3)}) \mathbf{Z}\breve{\beta},
\end{equation}
where 
$$\mathbf{Z}= \mathbf{A}\otimes \mathbf{I}_{n_2}, \quad \Bdiag  (\breve{\mathcal{D}}^{(l)})=\Diag(\breve{\mathbf{D}}^{(l)}_1,\ldots,\breve{\mathbf{D}}^{(l)}_m),\quad l=0,1,2,3.
$$
$\mathbf{A}$ is defined in \eqref{P}, and $\breve{\mathbf{D}}^{(l)}_t$ ($t=1,2,\cdots,m$) are the frontal slices of tensor $\breve{\mathcal{D}}^{(l)}$ computed by applying 
$\breve{\mathbf{F}}_{\breve{\mu}}$
along each tube of $\breve{\mathcal{T}}^{(l)}$ for $l=0,1,2,3$.
\end{theorem}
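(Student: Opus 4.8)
The plan is to reduce Theorem \ref{thm:QFT-block} to the real-tensor case already handled in Lemma \ref{lem: tsvd_blockReal}, together with the commutation identities for pure-quaternion units from Lemma \ref{lem:qua-commu}, exactly mirroring the way Theorem \ref{Thm:Sec2-1} was obtained from Lemma 6. First I would use linearity of $\text{bcirc}(\cdot)$ in its argument: since $\breve{\mathcal{T}}=\mathcal{T}^{(0)}+\mathcal{T}^{(1)}\breve{\mu}+\mathcal{T}^{(2)}\breve{\alpha}+\mathcal{T}^{(3)}\breve{\beta}$ with real tensors $\mathcal{T}^{(l)}$, and since each scalar $\breve{\mu},\breve{\alpha},\breve{\beta}$ multiplies every frontal slice uniformly, we have $\text{bcirc}(\breve{\mathcal{T}})=\text{bcirc}(\mathcal{T}^{(0)})+\text{bcirc}(\mathcal{T}^{(1)})\breve{\mu}+\text{bcirc}(\mathcal{T}^{(2)})\breve{\alpha}+\text{bcirc}(\mathcal{T}^{(3)})\breve{\beta}$. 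Then I would conjugate each term by $\breve{\mathbf{F}}_{\breve{\mu}}\otimes\mathbf{I}_{n_1}$ on the left and $\breve{\mathbf{F}}^*_{\breve{\mu}}\otimes\mathbf{I}_{n_2}$ on the right and distribute over the sum.

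The $l=0$ and $l=1$ terms are immediate from Lemma \ref{lem: tsvd_blockReal}: conjugating $\text{bcirc}(\mathcal{T}^{(0)})$ yields $\text{Bdiag}(\breve{\mathcal{D}}^{(0)})$, and for the $\breve{\mu}$ term I would first move $\breve{\mu}$ past $\breve{\mathbf{F}}^*_{\breve{\mu}}\otimes\mathbf{I}_{n_2}$ using the fact that $\breve{\mathbf{F}}_{\breve{\mu}}$ has entries lying in the commutative subfield $\mathbb{R}\oplus\mathbb{R}\breve{\mu}$, hence commutes with $\breve{\mu}$ (this is the block version of $\breve{\mu}\breve{\mathbf{X}}=\breve{\mathbf{X}}\breve{\mu}$ in Lemma \ref{lem:qua-commu}); what remains is $(\breve{\mathbf{F}}_{\breve{\mu}}\otimes\mathbf{I}_{n_1})\text{bcirc}(\mathcal{T}^{(1)})(\breve{\mathbf{F}}^*_{\breve{\mu}}\otimes\mathbf{I}_{n_2})\,\breve{\mu}=\text{Bdiag}(\breve{\mathcal{D}}^{(1)})\breve{\mu}$ by Lemma \ref{lem: tsvd_blockReal} again. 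For the $l=2$ term I would insert $\mathbf{I}=(\breve{\mathbf{F}}^*_{\breve{\mu}}\otimes\mathbf{I}_{n_2})(\breve{\mathbf{F}}_{\breve{\mu}}\otimes\mathbf{I}_{n_2})$ just before $\breve{\alpha}$, splitting the factor into a conjugated bcirc part and a leftover $(\breve{\mathbf{F}}_{\breve{\mu}}\otimes\mathbf{I}_{n_2})\breve{\alpha}(\breve{\mathbf{F}}^*_{\breve{\mu}}\otimes\mathbf{I}_{n_2})$; here I need the block analogue of $\breve{\alpha}\breve{\mathbf{X}}=(\breve{\mathbf{X}}^*)^T\breve{\alpha}$ together with $\breve{\mathbf{F}}^2_{\breve{\mu}}=\mathbf{A}$ from Lemma \ref{prop:QFM} to collapse the leftover to $(\mathbf{A}\otimes\mathbf{I}_{n_2})\breve{\alpha}=\mathbf{Z}\breve{\alpha}$, yielding $\text{Bdiag}(\breve{\mathcal{D}}^{(2)})\mathbf{Z}\breve{\alpha}$. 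The $l=3$ term with $\breve{\beta}$ is handled identically. Summing the four contributions gives the claimed identity.

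The main obstacle I anticipate is bookkeeping the non-commutativity carefully in the Kronecker-product setting: specifically, verifying that $(\breve{\mathbf{F}}_{\breve{\mu}}\otimes\mathbf{I}_{n_1})(\mathbf{M}\otimes\mathbf{I})=(\mathbf{M}\otimes\mathbf{I})(\breve{\mathbf{F}}_{\breve{\mu}}\otimes\mathbf{I}_{n_1})$ only when $\mathbf{M}$ is real (so its entries commute with the $\breve{\mu}$-coefficients of $\breve{\mathbf{F}}_{\breve{\mu}}$), and that the scalar units $\breve{\mu},\breve{\alpha},\breve{\beta}$ — being plain quaternion scalars rather than matrix-valued — pass through any \emph{real} Kronecker factor freely but obey the flip rule of Lemma \ref{lem:qua-commu} when passing the \emph{quaternion} factor $\breve{\mathbf{F}}_{\breve{\mu}}$. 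Since in Lemma \ref{lem: tsvd_blockReal} the conjugation of a real bcirc by $\breve{\mathbf{F}}_{\breve{\mu}}\otimes\mathbf{I}$ stays in the $\mathbb{R}\oplus\mathbb{R}\breve{\mu}$ subfield (its entries are the DFT-evaluated tubes, written with unit $\breve{\mu}$), the "$(\breve{\mathbf{X}}^*)^T$'' that appears when $\breve{\alpha}$ crosses it is harmless: it is absorbed by the second application of $\breve{\mathbf{F}}_{\breve{\mu}}$, which is precisely what produces the extra factor $\mathbf{A}=\breve{\mathbf{F}}^2_{\breve{\mu}}$. Once this is spelled out, the proof is a direct term-by-term computation and I would write it as such.
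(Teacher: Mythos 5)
Your proposal is correct and follows essentially the same route as the paper's proof: decompose $\text{bcirc}(\breve{\mathcal{T}})$ into the four real parts, conjugate term by term, handle the $\breve{\mu}$ term by commutativity within $\mathbb{R}\oplus\mathbb{R}\breve{\mu}$, and for the $\breve{\alpha},\breve{\beta}$ terms use the flip rule of Lemma \ref{lem:qua-commu} together with $\breve{\mathbf{F}}^2_{\breve{\mu}}=\mathbf{A}$ from Lemma \ref{prop:QFM} and the real-tensor case of Lemma \ref{lem: tsvd_blockReal} to produce the factor $\mathbf{Z}=\mathbf{A}\otimes\mathbf{I}_{n_2}$. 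The paper writes the same computation as a single displayed chain of equalities, so there is no substantive difference.
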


\begin{proof}
Let $\bcirc(\breve{\mathcal{T}})=\mathbf{X}^{(0)}+ \mathbf{X}^{(1)} \breve{\mu} +\mathbf{X}^{(2)} \breve{\alpha}+\mathbf{X}^{(3)}\breve{\beta}$, where $\mathbf{X}^{(0)}, \mathbf{X}^{(1)}, \mathbf{X}^{(2)}, \mathbf{X}^{(3)}\in \mathbb{R}^{n_1m\times n_2m}$.
We have, 
\begin{eqnarray*}
&&(\breve{\mathbf{F}}_{\breve{\mu}}\otimes \mathbf{I}_{n_1})
\bcirc(\breve{\mathcal{T}}) (\breve{\mathbf{F}}^*_{\breve{\mu}}\otimes \mathbf{I}_{n_2})\\
&=&(\breve{\mathbf{F}}_{\breve{\mu}}\otimes \mathbf{I}_{n_1})
(\mathbf{X}^{(0)} + \mathbf{X}^{(1)} \breve{\mu}+\mathbf{X}^{(2)} \breve{\alpha} +\mathbf{X}^{(3)} \breve{\beta}) (\breve{\mathbf{F}}^*_{\breve{\mu}}\otimes \mathbf{I}_{n_2})\\
&=& (\breve{\mathbf{F}}_{\breve{\mu}}\otimes \mathbf{I}_{n_1}) 
\mathbf{X}^{(0)} 
(\breve{\mathbf{F}}^*_{\breve{\mu}}\otimes \mathbf{I}_{n_2})+(\breve{\mathbf{F}}_{\breve{\mu}}\otimes \mathbf{I}_{n_1})
 \mathbf{X}^{(1)} 
 (\breve{\mathbf{F}}^*_{\breve{\mu}}\otimes \mathbf{I}_{n_2})\breve{\mu}\\
&+& (\breve{\mathbf{F}}_{\breve{\mu}}\otimes \mathbf{I}_{n_1}) 
\mathbf{X}^{(2)} 
(\breve{\mathbf{F}}^*_{\breve{\mu}}\otimes \mathbf{I}_{n_2})(\breve{\mathbf{F}}_{\mu}^2\otimes \mathbf{I}_{n_2})\breve{\alpha}+(\breve{\mathbf{F}}_{\mu}\otimes \mathbf{I}_{n_1}) \mathbf{X}^{(3)} (\breve{\mathbf{F}}^*_{\mu}\otimes \mathbf{I}_{n_2})(\breve{\mathbf{F}}_{\breve{\mu}}^2\otimes \mathbf{I}_{n_2})\breve{\beta}\\
&=& \Bdiag  (\breve{\mathcal{D}}^{(0)})+ \Bdiag (\breve{\mathcal{D}}^{(1)}) \breve{\mu}+\Bdiag(\breve{\mathcal{D}}^{(2)}) \mathbf{Z}\breve{\alpha}+\Bdiag(\breve{\mathcal{D}}^{(3)}) \mathbf{Z}\breve{\beta}.
\end{eqnarray*}
The second equation is established due to Lemma \ref{lem:qua-commu}, and the last equation holds because of Lemma \ref{lem: tsvd_blockReal}. 
Here $\mathbf{Z}= \mathbf{A}\otimes \mathbf{I}_{n_2}$, where 
$\mathbf{A}$ is defined in \eqref{P}. Also $ \Bdiag (\breve{\mathcal{D}}^{(l)})=\Diag(\breve{\mathbf{D}}^{(l)}_1, \breve{\mathbf{D}}^{(l)}_2,
\ldots,\breve{\mathbf{D}}^{(l)}_m),$ and $\breve{\mathbf{D}}^{(l)}_t$ ($t=1,2,\cdots,m$) are the frontal slices of tensor $\breve{\mathcal{D}}^{(l)}$ computed by applying 
$\breve{\mathbf{F}}_{\breve{\mu}}$
along each tube of $\breve{\mathcal{T}}^{(l)}$ for $l=0,1,2,3$.
The results follow.
\end{proof}


To start the discussion on quaternion tensor singular value decomposition, we first introduce quaternion singular value decomposition in the following lemma. 
\begin{lemma}\textbf{[Quaternion Singular Value Decomposition(QSVD)]}\cite{zhang1997quaternions,jia2019lanczos}
Given any quaternion matrix $\breve{\mathbf{M}}\in \mathbb{Q}^{n_1\times n_2}$, then there exist two unitary quaternion matrices
$\breve{\mathbf{U}}\in \mathbb{Q}^{n_1\times n_1}$ and $\breve{\mathbf{V}}\in \mathbb{Q}^{n_2\times n_2}$, i.e.,
$\breve{\mathbf{U}}^*\breve{\mathbf{U}} =\breve{\mathbf{U}} \breve{\mathbf{U}}^*=\mathbf{I}_{n_1}$, 
$\breve{\mathbf{V}}^*\breve{\mathbf{V}} =\breve{\mathbf{V}} \breve{\mathbf{V}}^*=\mathbf{I}_{n_2}$,
such that 
\begin{equation}
\breve{\mathbf{U}}^*\breve{\mathbf{M}}\breve{\mathbf{V}}=\mathbf{\Sigma},
\end{equation}
where $\mathbf{\Sigma} \in \mathbb{R}^{n_1\times n_2}$, with 
$\mathbf{\Sigma}_{sp}=0$ when $s\neq p$; and $\mathbf{\Sigma}_{ss} \ge 0$ 
for $s=1,2,\cdots,\min(m,n)$.
\end{lemma}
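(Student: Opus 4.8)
The plan is to reduce the quaternion SVD to the ordinary complex SVD through the complex adjoint representation of quaternion matrices. Writing $\mathbb{C}=\mathbb{R}+\mathbb{R}\mathtt{i}$, every quaternion decomposes uniquely as $\breve{q}=a+b\mathtt{j}$ with $a,b\in\mathbb{C}$, so $\breve{\mathbf{M}}=\mathbf{A}+\mathbf{B}\mathtt{j}$ with $\mathbf{A},\mathbf{B}\in\mathbb{C}^{n_1\times n_2}$, and I set
$$
\chi(\breve{\mathbf{M}})=\left(\begin{array}{cc}\mathbf{A}&\mathbf{B}\\-\overline{\mathbf{B}}&\overline{\mathbf{A}}\end{array}\right)\in\mathbb{C}^{2n_1\times 2n_2}.
$$
First I would record the properties of $\chi$ that drive the reduction: it is additive, multiplicative ($\chi(\breve{\mathbf{M}}\breve{\mathbf{N}})=\chi(\breve{\mathbf{M}})\chi(\breve{\mathbf{N}})$, using $\mathtt{j}z=\bar z\,\mathtt{j}$ for $z\in\mathbb{C}$), unital, injective, and compatible with the conjugate transpose, $\chi(\breve{\mathbf{M}}^*)=\chi(\breve{\mathbf{M}})^*$. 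Consequently $\breve{\mathbf{U}}$ is quaternion unitary if and only if $\chi(\breve{\mathbf{U}})$ is complex unitary, so it suffices to produce a complex SVD of $\chi(\breve{\mathbf{M}})$ whose unitary factors and whose diagonal factor all lie in the image of $\chi$; applying $\chi^{-1}$ then gives the quaternion decomposition.

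Next I would exploit the structural symmetry of that image. With $\mathbf{K}_n=\bigl(\begin{smallmatrix}\mathbf{0}&\mathbf{I}_n\\-\mathbf{I}_n&\mathbf{0}\end{smallmatrix}\bigr)$, a direct computation gives $\mathbf{K}_{n_1}\overline{\chi(\breve{\mathbf{M}})}=\chi(\breve{\mathbf{M}})\,\mathbf{K}_{n_2}$; equivalently, the conjugate-linear anti-unitary map $\mathcal{J}_n\colon\mathbf{x}\mapsto\mathbf{K}_n\overline{\mathbf{x}}$ (which satisfies $\mathcal{J}_n^2=-\mathbf{I}$) obeys $\chi(\breve{\mathbf{M}})\mathcal{J}_{n_2}=\mathcal{J}_{n_1}\chi(\breve{\mathbf{M}})$ and $\chi(\breve{\mathbf{M}})^*\mathcal{J}_{n_1}=\mathcal{J}_{n_2}\chi(\breve{\mathbf{M}})^*$. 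Hence $\mathcal{J}_{n_2}$ commutes with the Hermitian matrix $\chi(\breve{\mathbf{M}})^*\chi(\breve{\mathbf{M}})$, so it preserves each of its eigenspaces (the right singular subspaces), and it sends a left singular vector for $\sigma$ to another left singular vector for the same $\sigma$. Since $\mathcal{J}$ is anti-unitary and $\mathbf{w}\perp\mathcal{J}\mathbf{w}$ always, inside each such subspace — the kernel included — I can pick an orthonormal basis in $\mathcal{J}$-paired form $\{\mathbf{w}_1,\mathcal{J}\mathbf{w}_1,\mathbf{w}_2,\mathcal{J}\mathbf{w}_2,\dots\}$. After the corresponding permutation of columns, the complex SVD assembles as $\chi(\breve{\mathbf{M}})=\chi(\breve{\mathbf{U}})\,\chi(\mathbf{\Sigma}_{\mathbb{Q}})\,\chi(\breve{\mathbf{V}})^*$ with $\breve{\mathbf{U}},\breve{\mathbf{V}}$ quaternion unitary and $\mathbf{\Sigma}_{\mathbb{Q}}\in\mathbb{R}^{n_1\times n_2}$ diagonal and nonnegative (each entry of $\mathbf{\Sigma}_{\mathbb{Q}}$ occurring twice in the complex $\mathbf{\Sigma}$). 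By injectivity of $\chi$ this yields $\breve{\mathbf{U}}^*\breve{\mathbf{M}}\breve{\mathbf{V}}=\mathbf{\Sigma}_{\mathbb{Q}}$, and a final permutation arranges $(\mathbf{\Sigma}_{\mathbb{Q}})_{ss}$ in the stated order.

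An alternative, $\chi$-free route is to diagonalize the Hermitian positive semidefinite matrix $\breve{\mathbf{M}}^*\breve{\mathbf{M}}$ by a quaternion unitary, $\breve{\mathbf{V}}^*\breve{\mathbf{M}}^*\breve{\mathbf{M}}\breve{\mathbf{V}}=\operatorname{diag}(\sigma_1^2,\dots,\sigma_r^2,0,\dots,0)$ with $\sigma_i>0$ (the quaternion Hermitian spectral theorem), take the first $r$ columns of $\breve{\mathbf{U}}$ to be $\breve{\mathbf{M}}\breve{\mathbf{v}}_i\sigma_i^{-1}$ — orthonormal because the $\sigma_i$ are real and therefore central — complete to a quaternion unitary, and verify that the remaining blocks of $\breve{\mathbf{U}}^*\breve{\mathbf{M}}\breve{\mathbf{V}}$ vanish (using $\breve{\mathbf{M}}\breve{\mathbf{v}}_j=0$ for $j>r$). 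Whichever route one takes, the genuinely nontrivial ingredient, and the step I expect to be the main obstacle, is arranging the diagonalizing transformations to respect the quaternion structure in the presence of non-commutativity: concretely, that the singular subspaces of $\chi(\breve{\mathbf{M}})$ are $\mathcal{J}$-invariant and admit orthonormal bases of the paired form (equivalently, that a Hermitian quaternion matrix has a real spectrum and a unitary eigenbasis). The remaining steps are the familiar SVD bookkeeping, carried out with care that real scalars commute with every quaternion.
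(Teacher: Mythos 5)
The paper does not prove this lemma at all: it is quoted as a known result with citations to Zhang (1997) and Jia--Ng--Song (2019), so there is no in-paper argument to compare against. Your proposal is a correct, essentially self-contained proof along exactly the lines of the cited literature: the complex adjoint $\chi$ is the standard device, your verification of multiplicativity and $*$-compatibility is right, the relation $\mathbf{K}_{n_1}\overline{\chi(\breve{\mathbf{M}})}=\chi(\breve{\mathbf{M}})\mathbf{K}_{n_2}$ holds, and the resulting anti-unitary $\mathcal{J}$ with $\mathcal{J}^2=-\mathbf{I}$ does preserve the singular subspaces and force $\mathbf{w}\perp\mathcal{J}\mathbf{w}$, so every singular subspace (kernel included) is even-dimensional and admits a paired orthonormal basis. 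The only detail buried in your ``bookkeeping'' worth making explicit is a sign: a unitary $\mathbf{W}$ lies in the image of $\chi$ iff $\mathbf{K}\overline{\mathbf{W}}=\mathbf{W}\mathbf{K}$, and with columns ordered as $[\mathbf{w}_1,\dots,\mathbf{w}_n\,|\,\mathcal{J}\mathbf{w}_1,\dots,\mathcal{J}\mathbf{w}_n]$ this fails by a sign; you must pair each $\mathbf{w}_t$ with $-\mathcal{J}\mathbf{w}_t$ (or conjugate one half of the basis), which costs nothing since $-\mathcal{J}\mathbf{w}_t$ is still a unit vector in the same singular subspace, orthogonal to $\mathbf{w}_t$. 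Your alternative $\chi$-free route via the spectral theorem for quaternion Hermitian matrices is also standard and correct, but note that it shifts rather than removes the nontrivial ingredient, since that spectral theorem is usually itself proved through the same adjoint representation; the $\chi$ route is therefore the more economical of the two. Either way, what your argument buys over the paper's bare citation is an actual proof; it is consistent with the statement as used later in Theorem 3, where only the existence of the factorization and the realness and nonnegativity of $\mathbf{\Sigma}$ are needed.
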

We now formally present quaternion tensor singular value decomposition.  
\begin{theorem}\textbf{[Quaternion Tensor SVD (QT-SVD)]}\label{Thm3}
Given any quaternion tensor $\breve{\mathcal{T}}\in \mathbb{Q}^{n_1 \times n_2\times m}$,  then there exist unitary quaternion tensors 
$\breve{\mathcal{U}}\in \mathbb{Q}^{n_1\times n_1\times m}$ and $\breve{\mathcal{V}}\in \mathbb{Q}^{n_2\times n_2\times m}$, and a diagonal tensor $\breve{\mathcal{S}}\in \mathbb{Q}^{n_1\times n_2\times m}$ such that 
\begin{equation}\label{eq:qtsvd}
\breve{\mathcal{T}}=\breve{\mathcal{U}}\star \breve{\mathcal{S}}\star \breve{\mathcal{V}}^*.
\end{equation}  
The decomposition \eqref{eq:qtsvd} is called the quaternion tensor singular value decomposition of 
$\breve{\mathcal{T}}$.
\end{theorem}

\begin{proof}
We transform $\bcirc(\breve{\mathcal{T}})$ into the Fourier domain
by using $\breve{\mathbf{F}}_{\breve{\mu}}$, 
\begin{equation}
(\breve{\mathbf{F}}_{\breve{\mu}}\otimes \mathbf{I}_{n_1})
\bcirc(\breve{\mathcal{T}})
(\breve{\mathbf{F}}^*_{\breve{\mu}}\otimes \mathbf{I}_{n_2})=\breve{\mathbf{D}}.\end{equation}
From Theorem \ref{thm:QFT-block}, we have 
$$
\breve{\mathbf{D}}=\Bdiag (\breve{\mathcal{D}}^{(0)})+ \Bdiag (\breve{\mathcal{D}}^{(1)}) \breve{\mu}+\Bdiag(\breve{\mathcal{D}}^{(2)}) \mathbf{Z}\breve{\alpha}+\Bdiag (\breve{\mathcal{D}}^{(3)}) \mathbf{Z}\breve{\beta}, 
$$
where
$\Bdiag(\breve{\mathcal{D}}^{(l)})=\Diag(\breve{\mathbf{D}}^{(l)}_1,\cdots,\breve{\mathbf{D}}^{(l)}_m),~ l=0,1,2,3$. More precisely, the structure of $\breve{\mathbf{D}}$
is given as follows.
\begin{equation} \label{st1}
\text{If  m is even:}\quad   \breve{\mathbf{D}} =
\left(
           \begin{array}{cccccc}
\breve{\mathbf{D}}_{1} & 0 & \cdots  &  \cdots  & \cdots & 0 \\
0 & \breve{\mathbf{D}}_{1,2} & 0         & \cdots & 0 & \breve{\mathbf{D}}_{2,2} \\
   & 0 & \ddots &           & \begin{sideways} $\ddots$ \end{sideways} & 0 \\
\vdots   & \vdots &    &    \breve{\mathbf{D}}_{m/2+1}     &    &  \vdots      \\
   & 0 & \begin{sideways} $\ddots$ \end{sideways} & & \ddots & 0 \\ 
0  & \breve{\mathbf{D}}_{1,m} & 0        & \cdots & 0 & \breve{\mathbf{D}}_{2,m} \\
   \end{array}
	\right ),
	\end{equation}
 where 	
\begin{eqnarray*}
 \text{ $t=1, ~\frac{m}{2}+1$:}\quad 
&\breve{\mathbf{D}}_{t}=\breve{\mathbf{D}}^{(0)}_t+\breve{\mathbf{D}}^{(1)}_t\breve{\mu}+\breve{\mathbf{D}}^{(2)}_t\breve{\alpha}+\breve{\mathbf{D}}^{(3)}_t\breve{\beta}, \\
 \text{  $t=2,\ldots,\frac{m}{2},\frac{m}{2}+2,\ldots,m$:}\quad &  \breve{\mathbf{D}}_{1,t}=\breve{\mathbf{D}}^{(0)}_t+\breve{\mathbf{D}}^{(1)}_t\breve{\mu}, \quad
\breve{\mathbf{D}}_{2,t}=
\breve{\mathbf{D}}^{(2)}_t\breve{\alpha}+\breve{\mathbf{D}}^{(3)}_t\breve{\beta}. 
\end{eqnarray*}
Similarly,
\begin{equation} \label{st2}
\text{if $n$ is odd:}\quad  \breve{\mathbf{D}} =
\left(
           \begin{array}{ccccccc}
\breve{\mathbf{D}}_{1} & 0 & \cdots  &  \cdots & \cdots  & \cdots & 0 \\
0 & \breve{\mathbf{D}}_{1,2} & 0         & \cdots & \cdots & 0 & \breve{\mathbf{D}}_{2,2} \\
   & 0 & \ddots &       &    & \begin{sideways} $\ddots$ \end{sideways} & 0 \\
\vdots   & \vdots &    &    \breve{\mathbf{D}}_{1,(m+1)/2}  & 
\breve{\mathbf{D}}_{2,(m+1)/2}
   &    &  \vdots      \\
	\vdots   & \vdots &    &    \breve{\mathbf{D}}_{1,(m+1)/2+1}  & 
\breve{\mathbf{D}}_{2,(m+1)/2+1}
   &    &  \vdots      \\
   & 0 & \begin{sideways} $\ddots$ \end{sideways} & & & \ddots & 0 \\ 
0  & \breve{\mathbf{D}}_{1,m} & 0    & \cdots    & \cdots & 0 & \breve{\mathbf{D}}_{2,m} \\
   \end{array}
	\right ),
	\end{equation}
where 	
\begin{eqnarray*}
\text{ $t=1$:}\quad &\breve{\mathbf{D}}_{t}=\breve{\mathbf{D}}^{(0)}_t+\breve{\mathbf{D}}^{(1)}_t\breve{\mu}+\breve{\mathbf{D}}^{(2)}_t\breve{\alpha}+\breve{\mathbf{D}}^{(3)}_t\breve{\beta},  
 \\
\text{  $t=2,\ldots,m$:}\quad  &\breve{\mathbf{D}}_{1,t}=\breve{\mathbf{D}}^{(0)}_t+\breve{\mathbf{D}}^{(1)}_t\breve{\mu}, \quad
\breve{\mathbf{D}}_{2,t}=
\breve{\mathbf{D}}^{(2)}_t\breve{\alpha}+\breve{\mathbf{D}}^{(3)}_t\breve{\beta}.
\end{eqnarray*}
We remark that the size of $\breve{\mathbf{D}}_{1,t}$
and $\breve{\mathbf{D}}_{2,t}$ in (\ref{st1}) and (\ref{st2}) 
is $n_1 \times n_2$.

According to Theorem 1, we can perform block-diagonalization for 
$\breve{\mathbf{D}}$ by permuting of a set of block rows and block columns. 
More precisely, 
\begin{itemize}
\item  If $m$ is  even: we permute $\breve{\mathbf{D}}$ by exchanging its $n_1$ rows at $(k-1)n_1+1:k n_1$
with the rows at $(m-k+2)n_1+1:(m-k+3)n_1$,
and the $n_2$ columns at
$(k-1)n_2+1:k n_2$ 
with the columns at $(m-k+2)n_2+1:(m-k+3)n_2$  
for $k=3,4,\ldots,\frac{m}{2}$.

\item If $m$ is odd: we permute $\breve{\mathbf{D}}$ by exchanging its $n_1$ rows at $(k-1)n_1+1:k n_1$ 
with the rows at $(m-k+2)n_1+1:(m-k+3)n_1$,
and exchanging the $n_2$ columns at
$(k-1)n_2+1:k n_2$ 
with the columns at $(m-k+2)n_2+1:(m-k+3)n_2$  
for $k=3,4,\ldots,\frac{m+1}{2}$.
\end{itemize}

Then we can construct 1-by 1 and 2-by-2 block matrices in the following way.

\noindent If $m$ is even, 
\begin{eqnarray}
\text{ t=1:} \quad  &\breve{\mathbf{G}}_{t} = \breve{\mathbf{D}}_{1},\label{eq:G1even0}\\
\text{  $t=2,\ldots,\frac{m}{2}$:}\quad  
&\breve{\mathbf{G}}_t=\left(
           \begin{array}{cc}
            \breve{\mathbf{D}}_{1,t} &  \breve{\mathbf{D}}_{2,t}   \\
  \breve{\mathbf{D}}_{1,m+2-t}   & \breve{\mathbf{D}}_{2,m+2-t} \\
           \end{array}
         \right),
				\label{eq:Gteven}\\
	\text{ $t=\frac{m}{2}+1$:}\quad  &\breve{\mathbf{G}}_{t}  =\breve{\mathbf{D}}_{m/2+1}.\label{eq:G1even} 
\end{eqnarray}
If $m$ is odd, 
\begin{eqnarray}
\text{  $t=1$:}\quad  &\breve{\mathbf{G}}_{t} =\breve{\mathbf{D}}_{1},\label{eq:G1}\\
\text{ $t=2,\ldots,\frac{m+1}{2}$:}\quad  &
\breve{\mathbf{G}}_t=\left(
           \begin{array}{cc}
            \breve{\mathbf{D}}_{1,t} &  \breve{\mathbf{D}}_{2,t}   \\
  \breve{\mathbf{D}}_{1,m+2-t}   & \breve{\mathbf{D}}_{2,m+2-t} \\
           \end{array}
         \right). 	\label{eq:Gt}			
\end{eqnarray}
We remark that the even case and the odd case are similar.
For simplicity, we only consider 
the odd case in the following discussion. 

We apply the quaternion SVD on all the blocks $\breve{\mathbf{G}}_t$ for $t=1,\ldots, \frac{m+1}{2}$. Precisely,
\begin{eqnarray*}
t=1:\quad &\breve{\mathbf{G}}_t=\breve{\mathbf{U}}_1 \mathbf{\Sigma}_1
\breve{\mathbf{V}}^*_1,&
\\
t=2,\cdots, \frac{m+1}{2}:\quad &\breve{\mathbf{G}}_t=\breve{\mathbf{U}}_t \mathbf{\Lambda}_t\breve{\mathbf{V}}^*_t, &\quad 
\text{where}
\end{eqnarray*}
\begin{equation}\label{UtVt}
\breve{\mathbf{U}}_t=\left(
           \begin{array}{cc}
            \breve{\mathbf{U}}_{1,t} &  \breve{\mathbf{U}}_{2,t}   \\
  \breve{\mathbf{U}}_{1,m+2-t}   & \breve{\mathbf{U}}_{2,m+2-t} \\
           \end{array}
         \right), \quad \mathbf{\Lambda}_t=\left(
           \begin{array}{cc}
             \mathbf{\Sigma}_{t} &   0 \\
  0  &  \mathbf{\Sigma}_{m+2-t} \\
           \end{array}
         \right), \quad \breve{\mathbf{V}}_t=\left(
           \begin{array}{cc}
            \breve{\mathbf{V}}_{1,t} &  \breve{\mathbf{V}}_{2,t}   \\
  \breve{\mathbf{V}}_{1,m+2-t}   & \breve{\mathbf{V}}_{2,m+2-t} \\
           \end{array}
         \right).    
\end{equation}
It implies that $ \breve{\mathbf{D}}$ admits the following decomposition:
$$         
         \breve{\mathbf{D}}=\breve{\mathbf{U}}
				\mathbf{\Sigma}\breve{\mathbf{V}}^*,
$$         
where   
$$
\mathbf{\Sigma}=\Diag(\mathbf{\Sigma}_1,\mathbf{\Sigma}_2,\cdots,\mathbf{\Sigma}_m),
$$
\begin{equation}\label{eq:USV}
\breve{\mathbf{U}}=\left(
           \begin{array}{ccccc}
            \breve{\mathbf{U}}_1 & & &  & \\
          & \breve{\mathbf{U}}_{1,2}& &  &\breve{\mathbf{U}}_{2,2} \\
           & &\ddots& \begin{sideways} $\ddots$ \end{sideways}& \\
  & &\begin{sideways} $\ddots$ \end{sideways}&   \ddots  & \\
     & \breve{\mathbf{U}}_{1,m}& & & \breve{\mathbf{U}}_{2,m}\\
           \end{array}
         \right); \quad   
\breve{\mathbf{V}}= \left(
           \begin{array}{ccccc}
            \breve{\mathbf{V}}_1 & & &  & \\
          & \breve{\mathbf{V}}_{1,2}& &  &\breve{\mathbf{V}}_{2,2} \\
           & &\ddots& \begin{sideways} $\ddots$ \end{sideways}& \\
  & &\begin{sideways} $\ddots$ \end{sideways}&   \ddots  & \\
     & \breve{\mathbf{V}}_{1,m}& & & \breve{\mathbf{V}}_{2,m}\\
           \end{array}
         \right).
\end{equation}
Then,
\begin{equation}\label{DUSV}
(\breve{\mathbf{F}}^*_{\breve{\mu}}\otimes \mathbf{I}_{n_1})
\breve{\mathbf{D}} (\breve{\mathbf{F}}_{\breve{\mu}}\otimes \mathbf{I}_{n_2})=(\breve{\mathbf{F}}^*_{\breve{\mu}}\otimes \mathbf{I}_{n_1})\breve{\mathbf{U}}(\breve{\mathbf{F}}_{\breve{\mu}}\otimes \mathbf{I}_{n_1})(\breve{\mathbf{F}}^*_{\breve{\mu}}\otimes \mathbf{I}_{n_1})\mathbf{\Sigma}(\breve{\mathbf{F}}_{\breve{\mu}}\otimes \mathbf{I}_{n_2})(\breve{\mathbf{F}}^*_{\breve{\mu}}\otimes \mathbf{I}_{n_2})\breve{\mathbf{V}}(\breve{\mathbf{F}}_{\breve{\mu}}\otimes \mathbf{I}_{n_2}).
\end{equation}
The equation holds since $(\breve{\mathbf{F}}_{\breve{\mu}}\otimes \mathbf{I})(\breve{\mathbf{F}}^*_{\breve{\mu}}\otimes \mathbf{I})=\mathbf{I}$, where $\mathbf{I}$ is the identity matrix of appropriate size.

From Lemma \ref{lem: tsvd_blockReal} and Theorem \ref{thm:QFT-block}, we deduce that equation \eqref{DUSV} results in the product of three block circulant matrices, i.e.,
\begin{equation}
\bcirc(\breve{\mathcal{T}})=\bcirc(\breve{\mathcal{U}})
\bcirc(\breve{\mathcal{S}}) \bcirc(\breve{\mathcal{V}}^*),
\end{equation}
which implies 
$$
\breve{\mathcal{T}}=\breve{\mathcal{U}}\star 
\breve{\mathcal{S}}\star \breve{\mathcal{V}}^*.
$$

Now the remaining problem is to prove $\breve{\mathcal{U}}$ and $\breve{\mathcal{V}}$ are unitary. 
It is equivalent to the equation
$\bcirc(\breve{\mathcal{U}}^*)  \bcirc(\breve{\mathcal{U}})=\mathbf{I}$. From Theorem \ref{thm:QFT-block}, we have
$$
\bcirc(\breve{\mathcal{U}}^*)  \bcirc(\breve{\mathcal{U}})=(\breve{\mathbf{F}}^*_{\breve{\mu}}\otimes \mathbf{I}_{n_1})\breve{\mathbf{U}}^*(\breve{\mathbf{F}}_{\breve{\mu}}\otimes \mathbf{I}_{n_1})(\breve{\mathbf{F}}^*_{\breve{\mu}}\otimes \mathbf{I}_{n_1})\breve{\mathbf{U}}(\breve{\mathbf{F}}_{\breve{\mu}}\otimes \mathbf{I}_{n_1})=(\breve{\mathbf{F}}^*_{\breve{\mu}}\otimes \mathbf{I}_{n_1})\breve{\mathbf{U}}^*\breve{\mathbf{U}}(\breve{\mathbf{F}}_{\breve{\mu}}\otimes \mathbf{I}_{n_1}).
$$
Note that $\breve{\mathbf{U}}^*\breve{\mathbf{U}}=\mathbf{I}$, we deduce that $\bcirc(\breve{\mathcal{U}}^*)  \bcirc(\breve{\mathcal{U}})=\mathbf{I}$, which implies $\breve{\mathcal{U}}^*\star\breve{\mathcal{U}}=\mathcal{I}$. Similarly, $\breve{\mathcal{U}}\star\breve{\mathcal{U}}^*=\mathcal{I}$, hence $\breve{\mathcal{U}}$ is unitary. By using  similar arguments, we can 
show $\breve{\mathcal{V}}$ is unitary. 
\end{proof}
Given
\begin{equation}\label{eq:LWR}
\breve{\mathcal{L}}=\fold\Big(\left(
           \begin{array}{c}
            \breve{\mathbf{U}}_1 \\ \breve{\mathbf{U}}_{1,2}+ \breve{\mathbf{U}}_{2,2} \\
            \vdots \\ \breve{\mathbf{U}}_{1,m}+ \breve{\mathbf{U}}_{2,m} \\
           \end{array}
         \right)\Big);\quad 
          \mathcal{W} =\fold\Big(\left(
           \begin{array}{c}
             \mathbf{\Sigma}_1 \\  \mathbf{\Sigma}_{2}\\
            \vdots \\ \mathbf{\Sigma}_{m} \\
           \end{array}
         \right)\Big);\quad
         \breve{\mathcal{R}}=\fold\Big(\left(
           \begin{array}{c}
            \breve{\mathbf{V}}_1 \\ \breve{\mathbf{V}}_{1,2}+ \breve{\mathbf{V}}_{2,2} \\
            \vdots \\ \breve{\mathbf{V}}_{1,m}+ \breve{\mathbf{V}}_{2,m} \\
           \end{array}
         \right)\Big);
\end{equation}
we remark that the frontal slices of tensors $\breve{\mathcal{L}}$, $\mathcal{W}$  and $\breve{\mathcal{R}}$ are computed by applying QFFT along each tube of $\breve{\mathcal{U}}$, $\breve{\mathcal{S}}$, 
$\breve{\mathcal{V}}$ respectively, which means that $\breve{\mathcal{U}}$, $\breve{\mathcal{S}}$, 
$\breve{\mathcal{V}}$ can be given by applying 
inverse QFFT along each tube of $\breve{\mathcal{L}}$, $\mathcal{W}$  and $\breve{\mathcal{R}}$ respectively.

\begin{remark}
Given quaternion tensor $\breve{\mathcal{T}}\in \mathbb{Q}^{n_1\times n_2\times m}$,  for any orthogonal quaternion tensor $\breve{\mathcal{Q}}\in \mathbb{Q}^{n_1\times n_1\times m}$, i.e.,
$\breve{\mathcal{Q}}^* \star \breve{\mathcal{Q}} =\breve{\mathcal{Q}}\star \breve{\mathcal{Q}}^* =\mathbf{I}$, we remark that
\begin{equation}
\|\breve{\mathcal{Q}}\star \breve{\mathcal{T}}\|^2_F=\|\bcirc(\mathcal{\breve{\mathcal{Q}}}) \unfold(\breve{\mathcal{T}})\|^2_F=\|\breve{\mathcal{T}}\|^2_F.
\end{equation}
\end{remark}

As we know that the standard SVD gives the best low rank approximation for any matrix, in the following, a similar result will be presented for 
quaternion tensor singular value decomposition. 
To derive the best rank-$r$ approximation for $\breve{\mathcal{T}}$, 
i.e., find a $\breve{\mathcal{Z}}\in \Omega$ such that
$$
\mathop{\min}\limits_{\breve{\mathcal{Z}}\in \Omega}
\|\breve{\mathcal{T}}-\breve{\mathcal{Z}}\|^2_F,
$$ 
where 
$$
\Omega=\{\breve{\mathcal{Z}}|\breve{\mathcal{Z}}=\breve{\mathcal{X}}
\star \breve{\mathcal{Y}}, ~ \breve{\mathcal{X}}\in \mathbb{Q}^{n_1\times r\times m}, ~ \breve{\mathcal{Y}}\in \mathbb{Q}^{r\times n_2\times m} \}.
$$

Let us revisit the analysis in Theorem \ref{Thm3}. 
In the proof for Theorem  \ref{Thm3}, quaternion SVD is utilized to get the quaternion tensor singular value decomposition: 
$(\breve{\mathcal{U}},\mathcal{S},\breve{\mathcal{V}})$ for quaternion 
tensor $\breve{\mathcal{T}}$. In the following, we consider their truncated 
versions with keeping $r$ components in the corresponding terms: 
\begin{equation}\label{Dr}
\breve{\mathbf{D}}^{[r]}=\breve{\mathbf{U}}^{[r]}
\mathbf{\Sigma}^{[r]}(\breve{\mathbf{V}}^{[r]})^*, 
\end{equation} 
where 
$$
\mathbf{\Sigma}^{[r]} :=\Diag(\mathbf{\Sigma}_1^{[r]},\mathbf{\Sigma}^{[r]}_2,\cdots,\mathbf{\Sigma}^{[r]}_m),
$$
$$ \breve{\mathbf{U}}^{[r]}:=\left(
           \begin{array}{ccccc}
            \breve{\mathbf{U}}^{[r]}_1 & & &  & \\
          & \breve{\mathbf{U}}^{[r]}_{1,2}& &  &\breve{\mathbf{U}}^{[r]}_{2,2} \\
           & &\ddots& \begin{sideways} $\ddots$ \end{sideways}& \\
  & &\begin{sideways} $\ddots$ \end{sideways}&   \ddots  & \\
     & \breve{\mathbf{U}}^{[r]}_{1,m}& & & \breve{\mathbf{U}}^{[r]}_{2,m}\\
           \end{array}
         \right), \quad   
\breve{\mathbf{V}}^{[r]}:= \left(
           \begin{array}{ccccc}
            \breve{\mathbf{V}}^{[r]}_1 & & &  & \\
          & \breve{\mathbf{V}}^{[r]}_{1,2}& &  &\breve{\mathbf{V}}^{[r]}_{2,2} \\
           & &\ddots& \begin{sideways} $\ddots$ \end{sideways}& \\
  & &\begin{sideways} $\ddots$ \end{sideways}&   \ddots  & \\
     & \breve{\mathbf{V}}^{[r]}_{1,m}& & & \breve{\mathbf{V}}^{[r]}_{2,m}\\
           \end{array}
         \right),
$$
and the components of $\breve{\mathbf{U}}^{[r]}$, $\mathbf{\Sigma}^{[r]}$ and $\breve{\mathbf{V}}^{[r]}$ are given as follows.

For the case when $m$ is odd,
\begin{eqnarray*} 
t=1: &\quad 
\breve{\mathbf{U}}^{[r]}_t \doteq\breve{\mathbf{U}}_t(:,1:r),\quad 
 \mathbf{\Sigma}^{[r]}_t  \doteq \mathbf{\Sigma}_t(1:r,1:r),\quad 
 \breve{\mathbf{V}}^{[r]}_t  \doteq\breve{\mathbf{V}}_t(:,1:r);\\
t=2,\cdots,\frac{m+1}{2}:& \quad \left(
           \begin{array}{cc}
            \breve{\mathbf{U}}^{[r]}_{1,t} &  \breve{\mathbf{U}}^{[r]}_{2,t}   \\
  \breve{\mathbf{U}}^{[r]}_{1,m+2-t}   & \breve{\mathbf{U}}^{[r]}_{2,m+2-t} \\
           \end{array}
         \right) \doteq \breve{\mathbf{U}}_{t}(:,1:2r); \\      
       &  \left(
           \begin{array}{cc}
             \mathbf{\Sigma}^{[r]}_{t} & 0   \\
0    &  \mathbf{\Sigma}^{[r]}_{m+2-t} \\
           \end{array}
         \right) \doteq \mathbf{\Lambda}_{t}(:,1:2r);\\    
         &     
         \left(
           \begin{array}{cc}
            \breve{\mathbf{V}}^{[r]}_{1,t} &  \breve{\mathbf{V}}^{[r]}_{2,t}   \\
  \breve{\mathbf{V}}^{[r]}_{1,m+2-t}   & \breve{\mathbf{V}}^{[r]}_{2,m+2-t} \\
           \end{array}
         \right) \doteq \breve{\mathbf{V}}_{t}(:,1:2r).    
\end{eqnarray*}
For the case when $m$ is even, besides the blocks above, there is one more block given below:
$$
\breve{\mathbf{U}}^{[r]}_{\frac{m+2}{2}} =\breve{\mathbf{U}}_{\frac{m+2}{2}}(:,1:r), \quad
\mathbf{\Sigma}^{[r]}_{\frac{m+2}{2}}=
\mathbf{\Sigma}_{\frac{m+2}{2}}(:,1:r);\quad\breve{\mathbf{V}}^{[r]}_{\frac{m+2}{2}}=\breve{\mathbf{V}}_{\frac{m+2}{2}}(:,1:r).
$$
Define $(\breve{\mathcal{U}}^{[r]},\breve{\mathcal{S}}^{[r]},
\breve{\mathcal{V}}^{[r]})$ such that 
\begin{eqnarray}
\bcirc(\breve{\mathcal{S}}^{[r]})&=&(\breve{\mathbf{F}}^*_{\breve{\mu}}\otimes \mathbf{I}_{r}) \mathbf{\Sigma}^{[r]}(\breve{\mathbf{F}}_{\breve{\mu}}\otimes \mathbf{I}_{r});\label{Sr}\\
 \bcirc(\breve{\mathcal{U}}^{[r]})&=&(\breve{\mathbf{F}}^*_{\breve{\mu}}\otimes \mathbf{I}_{n_1})\breve{\mathbf{U}}^{[r]}(\breve{\mathbf{F}}_{\breve{\mu}}\otimes \mathbf{I}_{r});\label{Ur}\\
  \bcirc(\breve{\mathcal{V}}^{[r]})&=&(\breve{\mathbf{F}}^*_{\breve{\mu}}\otimes \mathbf{I}_{n_2})\breve{\mathbf{V}}^{[r]}(\breve{\mathbf{F}}_{\breve{\mu}}\otimes \mathbf{I}_{r}).\label{Vr}
\end{eqnarray} 
Now we can derive the best rank-$r$ approximation for the quaternion tensor 
$\breve{\mathcal{T}}$ in the following theorem.

\begin{theorem}[Best low-rank approximation]
Given a quaternion tensor $\breve{\mathcal{T}}\in \mathbb{Q}^{n_1\times n_2\times m}$, its quaternion T-SVD is given by 
$\breve{\mathcal{T}}=\breve{\mathcal{U}} \star
\breve{\mathcal{S}}\star\breve{\mathcal{V}}^*$. For $r<\min(n_1,n_2)$, define
$$
\breve{\mathcal{T}}^{[r]}= \breve{\mathcal{U}}^{[r]} \star  
\breve{\mathcal{S}}^{[r]} \star  (\breve{\mathcal{V}}^{[r]})^*,
$$
then 
$\breve{\mathcal{T}}^{[r]}=\mathop{\arg\min}\limits_{\breve{\mathcal{Z}}\in \Omega}
\|\breve{\mathcal{T}}-\breve{\mathcal{Z}}\|^2_F,
$ where $\Omega=\{\breve{\mathcal{Z}}|\breve{\mathcal{Z}}=\breve{\mathcal{X}}
\star \breve{\mathcal{Y}}; \breve{\mathcal{X}}\in \mathbb{Q}^{n_1\times r\times m}, \breve{\mathcal{Y}}\in \mathbb{Q}^{r\times n_2\times m} \}$.
\end{theorem}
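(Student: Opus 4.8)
The plan is to transport the entire optimization into the transformed domain, where Theorems~\ref{thm:QFT-block} and~\ref{Thm3} turn $\text{bcirc}$ into a block-diagonal matrix and the $\star$-product into ordinary matrix multiplication, so that the problem splits into one Eckart--Young--type subproblem per diagonal block. For $\breve{\mathcal{Z}}\in\Omega\cup\{\breve{\mathcal{T}}\}$ set $\breve{\mathbf{D}}_{\breve{\mathcal{Z}}} = \mathbf{P}(\breve{\mathbf{F}}_{\breve{\mu}}\otimes\mathbf{I}_{n_1})\text{bcirc}(\breve{\mathcal{Z}})(\breve{\mathbf{F}}^{*}_{\breve{\mu}}\otimes\mathbf{I}_{n_2})\mathbf{P}^{*}$, with $\mathbf{P}$ the block permutation from Theorem~\ref{Thm3}. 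Since every frontal slice of a third-order tensor occurs $m$ times inside its $\text{bcirc}$, and since $\breve{\mathbf{F}}_{\breve{\mu}}$ (Lemma~\ref{prop:QFM}) and $\mathbf{P}$ are unitary, one gets $\|\breve{\mathbf{D}}_{\breve{\mathcal{T}}}-\breve{\mathbf{D}}_{\breve{\mathcal{Z}}}\|_{F}^{2}=\|\text{bcirc}(\breve{\mathcal{T}}-\breve{\mathcal{Z}})\|_{F}^{2}=m\,\|\breve{\mathcal{T}}-\breve{\mathcal{Z}}\|_{F}^{2}$, as in the Remark following Theorem~\ref{Thm3}. Hence the given problem is equivalent to minimizing $\|\breve{\mathbf{D}}_{\breve{\mathcal{T}}}-\breve{\mathbf{D}}_{\breve{\mathcal{Z}}}\|_{F}$ over the image $\Omega'$ of $\Omega$.

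Next I would describe $\Omega'$. For $\breve{\mathcal{Z}}=\breve{\mathcal{X}}\star\breve{\mathcal{Y}}$ we have $\text{bcirc}(\breve{\mathcal{Z}})=\text{bcirc}(\breve{\mathcal{X}})\,\text{bcirc}(\breve{\mathcal{Y}})$; inserting $(\breve{\mathbf{F}}^{*}_{\breve{\mu}}\otimes\mathbf{I}_{r})(\breve{\mathbf{F}}_{\breve{\mu}}\otimes\mathbf{I}_{r})=\mathbf{I}$ between the two factors and applying Theorem~\ref{thm:QFT-block} to $\breve{\mathcal{X}}$ (inner size $r$) and to $\breve{\mathcal{Y}}$ shows that $\breve{\mathbf{D}}_{\breve{\mathcal{Z}}}$ is block diagonal with the same block pattern as $\breve{\mathbf{D}}_{\breve{\mathcal{T}}}$, whose $t$-th block is $\breve{\mathbf{G}}^{\breve{\mathcal{X}}}_{t}\,\breve{\mathbf{G}}^{\breve{\mathcal{Y}}}_{t}$, where $\breve{\mathbf{G}}^{\breve{\mathcal{X}}}_{t}$ carries the same two-axis internal structure ($\breve{\mu}$-part versus $\breve{\alpha},\breve{\beta}$-part) as $\breve{\mathbf{G}}_{t}$ but with inner column size $r$ ($1\times1$ blocks) or $2r$ ($2\times2$ blocks); in particular each block of $\breve{\mathbf{D}}_{\breve{\mathcal{Z}}}$ has rank $\le r$, resp.\ $\le 2r$. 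Conversely, unwinding the definitions of $\breve{\mathbf{U}}^{[r]},\mathbf{\Sigma}^{[r]},\breve{\mathbf{V}}^{[r]}$ and of $(\breve{\mathcal{U}}^{[r]},\breve{\mathcal{S}}^{[r]},\breve{\mathcal{V}}^{[r]})$ in~\eqref{Sr}--\eqref{Vr} shows $\breve{\mathcal{T}}^{[r]}\in\Omega$, and that $\breve{\mathbf{D}}_{\breve{\mathcal{T}}^{[r]}}$ is block diagonal with $t$-th block equal to the quaternion SVD of $\breve{\mathbf{G}}_{t}$ truncated by keeping the top $r$ singular values of \emph{each} constituent $\mathbf{\Sigma}_{t}$, $\mathbf{\Sigma}_{m+2-t}$.

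Finally, because the blocks occupy disjoint row/column ranges, $\|\breve{\mathbf{D}}_{\breve{\mathcal{T}}}-\breve{\mathbf{D}}_{\breve{\mathcal{Z}}}\|_{F}^{2}=\sum_{t}\|\breve{\mathbf{G}}_{t}-\breve{\mathbf{G}}^{\breve{\mathcal{Z}}}_{t}\|_{F}^{2}$ and the per-block constraints are independent, so the minimization separates. Each $1\times1$ block is handled directly by the quaternion Eckart--Young--Mirsky theorem (which follows from the quaternion SVD together with the classical result, via the complex adjoint of a quaternion matrix). For a $2\times2$ block I would use the decoupled QSVD $\breve{\mathbf{G}}_{t}=\breve{\mathbf{U}}_{t}\mathbf{\Lambda}_{t}\breve{\mathbf{V}}_{t}^{*}$ with $\mathbf{\Lambda}_{t}=\text{Diag}(\mathbf{\Sigma}_{t},\mathbf{\Sigma}_{m+2-t})$ from the proof of Theorem~\ref{Thm3}, arguing that the constraint inherited from $\Omega$ forces the approximant to respect this splitting, i.e.\ to be rank $\le r$ on each constituent, so that its Frobenius-optimal choice is precisely the $t$-th block of $\breve{\mathbf{D}}_{\breve{\mathcal{T}}^{[r]}}$. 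Summing over $t$ and dividing by $m$ gives $\|\breve{\mathcal{T}}-\breve{\mathcal{T}}^{[r]}\|_{F}\le\|\breve{\mathcal{T}}-\breve{\mathcal{Z}}\|_{F}$ for all $\breve{\mathcal{Z}}\in\Omega$; the even-$m$ case is entirely analogous, with the extra $1\times1$ block at position $m/2+1$ treated like the one at position $1$.

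The main obstacle is exactly the last point. One must show that membership in $\Omega$ imposes a \emph{per-constituent} rank-$r$ bound on each $2\times2$ block, not just a joint rank-$2r$ bound, since otherwise the unconstrained best rank-$2r$ truncation of $\breve{\mathbf{G}}_{t}$ --- which need not split its components evenly across $\mathbf{\Sigma}_{t}$ and $\mathbf{\Sigma}_{m+2-t}$, and need not lie in $\Omega'$ --- could in principle beat $\breve{\mathcal{T}}^{[r]}$. Carefully tracking the two-axis structure of $\breve{\mathbf{G}}^{\breve{\mathcal{X}}}_{t}$ and $\breve{\mathbf{G}}^{\breve{\mathcal{Y}}}_{t}$ through the matrix product, reconciling it with the decoupled form of the QSVD of $\breve{\mathbf{G}}_{t}$, and invoking the quaternion Eckart--Young--Mirsky theorem together constitute the real work; the remaining steps are routine bookkeeping.
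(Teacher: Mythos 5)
Your proposal follows the same route as the paper's own proof: reduce via unitary invariance of $\breve{\mathbf{F}}_{\breve{\mu}}\otimes\mathbf{I}$ (and the factor $m$ from $\mathrm{bcirc}$) to minimizing $\|\breve{\mathbf{D}}-\breve{\mathbf{D}}_{\Omega}\|_F$ over the transformed feasible set $\Omega_{\mathbf{D}}$, then invoke quaternion Eckart--Young blockwise. The difference is one of candour rather than of method: the step you flag as ``the real work'' --- showing that membership in $\Omega$ forces the balanced, per-constituent rank-$r$ truncation on each $2\times 2$ block rather than merely a joint rank-$2r$ bound --- is precisely the step the paper does not carry out either; its proof simply asserts ``as it is well-known that quaternion SVD gives the best rank-$r$ approximation, $\breve{\mathbf{D}}^{[r]}=\arg\min_{\breve{\mathbf{D}}_{\Omega}\in\Omega_{\mathbf{D}}}\|\breve{\mathbf{D}}-\breve{\mathbf{D}}_{\Omega}\|$'' without ever characterizing $\Omega_{\mathbf{D}}$. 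Your worry is genuine: if an unbalanced rank-$2r$ truncation of $\breve{\mathbf{G}}_t$ (say $r+s$ components from $\mathbf{\Sigma}_t$ and $r-s$ from $\mathbf{\Sigma}_{m+2-t}$) were feasible, it would in general strictly beat $\breve{\mathbf{D}}^{[r]}$, so the theorem really does hinge on the structural constraint you identify.

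To close the gap, the cleanest observation is that the QFFT of a \emph{real} tube satisfies the conjugate symmetry $\breve{\mathbf{D}}^{(l)}_{m+2-t}=\overline{\breve{\mathbf{D}}^{(l)}_t}$, so the four slices entering $\breve{\mathbf{G}}_t$ are all determined by the single quaternion matrix $\breve{\mathbf{M}}_t:=\breve{\mathbf{D}}^{(0)}_t+\breve{\mathbf{D}}^{(1)}_t\breve{\mu}+\breve{\mathbf{D}}^{(2)}_t\breve{\alpha}+\breve{\mathbf{D}}^{(3)}_t\breve{\beta}$; the map $\breve{\mathbf{M}}_t\mapsto\breve{\mathbf{G}}_t$ is a faithful real-algebra embedding $\chi$ (a block analogue of the complex adjoint of a quaternion matrix), multiplicative because products of block circulants are block circulant. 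Hence the feasible set for the $t$-th block is exactly $\{\chi(\breve{\mathbf{X}}\breve{\mathbf{Y}}):\breve{\mathbf{X}}\in\mathbb{Q}^{n_1\times r},\ \breve{\mathbf{Y}}\in\mathbb{Q}^{r\times n_2}\}$, i.e.\ the $\chi$-image of the rank-$\le r$ quaternion matrices; the unbalanced truncations simply do not lie in it. Moreover $\|\chi(\cdot)\|_F^2=2\|\cdot\|_F^2$ and the singular values of $\chi(\breve{\mathbf{M}}_t)$ are those of $\breve{\mathbf{M}}_t$ doubled (so $\mathbf{\Sigma}_t=\mathbf{\Sigma}_{m+2-t}$ up to ordering), whence the balanced truncation retained in $\breve{\mathbf{D}}^{[r]}$ is exactly $\chi$ of the quaternion Eckart--Young optimum for $\breve{\mathbf{M}}_t$. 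With that lemma in hand your argument (and the paper's) is complete; without it, both proofs have the same unfilled step.
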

\begin{proof}
Since $\breve{\mathcal{Z}}=\breve{\mathcal{X}}\star \breve{\mathcal{Y}}$, we have
$$
\bcirc(\breve{\mathcal{Z}})=\bcirc(\breve{\mathcal{X}})
\bcirc( \breve{\mathcal{Y}}).
$$
Then
\begin{eqnarray*}
\|\breve{\mathcal{T}}-\breve{\mathcal{Z}}\|^2_F&=&\frac{1}{m}\|\bcirc(\breve{\mathcal{T}})-\bcirc(\breve{\mathcal{Z}})\|_F\\
&=&\frac{1}{m}\|(\breve{\mathbf{F}}_{\breve{\mu}} 
\otimes \mathbf{I}_{n_1})\big(\bcirc(\breve{\mathcal{T}})-\bcirc(\breve{\mathcal{Z}})\big)(\breve{\mathbf{F}}^*_{\breve{\mu}}\otimes \mathbf{I}_{n_2})\|^2_F\\
&=& \frac{1}{m}\|\breve{\mathbf{D}}-(\breve{\mathbf{F}}_{\breve{\mu}}\otimes \mathbf{I}_{n_1})\bcirc(\breve{\mathcal{Z}})(\breve{\mathbf{F}}
^*_{\breve{\mu}}\otimes \mathbf{I}_{n_2})\|^2_F\\
&=&\frac{1}{m}\|\breve{\mathbf{D}}-\breve{\mathbf{D}}_{\Omega} \|^2_F,
\end{eqnarray*}
where $\breve{\mathbf{D}}_{\Omega}=(\breve{\mathbf{F}}_{\breve{\mu}}
\otimes \mathbf{I}_{n_1})\bcirc(\breve{\mathcal{Z}})(\breve{\mathbf{F}}^*_{\breve{\mu}}\otimes \mathbf{I}_{n_2})$. 

Therefore, instead of finding $\breve{\mathcal{Z}}$ such that $\min\limits_{\breve{\mathcal{Z}}\in \Omega}\|\breve{\mathcal{T}}-\breve{\mathcal{Z}}\|^2_F$, we turn to look for $\breve{\mathbf{D}}_{\Omega}$ such that
$$\min_{\breve{\mathbf{D}}_{\Omega}\in \Omega_{ \mathbf{D}}}\|\breve{\mathbf{D}}-\breve{\mathbf{D}}_{\Omega} \|^2_F,$$
where $\Omega_{ \mathbf{D}}=\{\breve{\mathbf{D}}_{\Omega}|\breve{\mathbf{D}}_{\Omega}=(\breve{\mathbf{F}}_{\breve{\mu}}\otimes \mathbf{I}_{n_1})\bcirc(\breve{\mathcal{Z}})(\breve{\mathbf{F}}^*_{\breve{\mu}}\otimes \mathbf{I}_{n_2}), \breve{\mathcal{Z}}\in \Omega \}$.

Construct $\breve{\mathbf{D}}^{[r]}$ by equation \eqref{Dr}. As it is well-known that quaternion SVD gives the best rank-$r$ approximation, we have
$$\breve{\mathbf{D}}^{[r]}=\mathop{\arg\min}_{\breve{\mathbf{D}}_{\Omega}\in \Omega_{ \mathbf{D}}}\|\breve{\mathbf{D}}-\breve{\mathbf{D}}_{\Omega}\|.$$
It implies that $(\breve{\mathcal{U}}^{[r]},\breve{\mathcal{S}}^{[r]},\breve{\mathcal{V}}^{[r]})$ are given in \eqref{Sr}-\eqref{Vr}. The results hence follow.
\end{proof}

Based on Theorem \ref{thm:QFT-block}, quaternion T-SVD can be implemented by using the quaternion fast Fourier transform, which is presented in Algorithm \ref{algo:qtsvd} below. To simplify the notations, here we directly use the command symbols ''qsvd", ''qfft" and ''iqfft" in MATLAB to represent quaternion SVD, quaternion fast Fourier transform and inverse quaternion fast Fourier transform. 

\begin{algorithm}[h] 
\caption{Fast Quaternion TSVD   \label{algo:qtsvd}}
\begin{algorithmic}[1]
\REQUIRE Given quaternion tensor $\breve{\mathcal{T}}= \mathcal{T}^{(0)}+\mathcal{T}^{(1)} \breve{\mu}+\mathcal{T}^{(2)} \breve{\alpha}+\mathcal{T}^{(3)} \breve{\beta} \in \mathbb{Q}^{n_1\times n_2\times m}$;  quaternion Fourier transform matrix $\breve{\mathbf{F}}_{\mu}$.\\

\ENSURE $(\breve{\mathcal{U}}, \breve{\mathcal{S}},\breve{\mathcal{V}})$ such that $\breve{\mathcal{T}}=\breve{\mathcal{U}}\star\breve{\mathcal{S}}\star\breve{\mathcal{V}}^*$.\\
 
\STATE  Block-diagonalization.\\
$\breve{\mathcal{D}}^{(l)}=\text{qfft}(\mathcal{T}^{(l)},[~],3)$, $l=0,1,2,3$. \\

\STATE Form $\{\breve{\mathbf{G}}_t, t=1,2,\cdots,\lceil \frac{m+1}{2}\rceil \}$  based on \eqref{eq:G1even0}-\eqref{eq:Gt}.

\FOR {$t$ = 1 : $\lceil \frac{m+1}{2}\rceil$ }
\STATE $[\breve{\mathbf{U}}_t, \mathbf{\Lambda}_t,\breve{\mathbf{V}}_t]=\text{qsvd}(\breve{\mathbf{G}}_t)$.\\
\ENDFOR 

\STATE Construct $(\breve{\mathbf{U}},\mathbf{\Sigma},\breve{\mathbf{V}})$ based on $\eqref{eq:USV}$.\\

\STATE Construct tensors 
 $(\breve{\mathcal{L}},\mathcal{W}, \breve{\mathcal{R}}) $
from \eqref{eq:LWR}.

\STATE  $\breve{\mathcal{U}}=\text{iqfft}(\breve{\mathcal{L}},[~],3)$;
$\quad \breve{\mathcal{S}} =\text{iqfft}( \mathcal{W},[~],3)$; $\quad\breve{\mathcal{V}}=\text{iqfft}(\breve{\mathcal{R}},[~],3)$.
\end{algorithmic}
\end{algorithm}

\begin{remark}
In line 2-3 of Algorithm  \ref{algo:qtsvd}, $\lceil \frac{m+1}{2}\rceil $ denotes the round operator that rounds $\frac{m+1}{2}$ up to the nearest integer. Note that Algorithm \ref{algo:qtsvd} performs a "qfft" operation on tensor $\breve{\mathcal{T}}$, and one "iqfft" operation on each of tensors $\breve{\mathcal{L}}$, $\mathcal{W}$ and $\breve{\mathcal{R}}$, and $\lceil \frac{m+1}{2}\rceil $ operations "qsvd" on quaternion matrices  $\{\breve{\mathbf{G}}_t\}$. We remark that the computational cost of Algorithm \ref{algo:qtsvd}  mainly depends on that of "qsvd". To speed up Algorithm \ref{algo:qtsvd}, one can consider fast methods to compute quaternion SVD (see \cite{liu2022randomized} for example), or design parallel quaternion svd ("qsvd" for $\{\breve{\mathbf{G}}_t\}$ can be compute in parallel).
\end{remark}

\begin{remark}
To get the best tubal rank-$r$ approximation of quaternion tensor $\breve{\mathcal{T}}$, one only blue needs to change QSVD in step 4 in Algorithm \ref{algo:qtsvd} to truncated QSVD.
\end{remark}


\section{Numerical Examples}

In this section, we conduct experiments on  computing quaternion circulant matrix inverse, solving quaternion Toeplitz systems to test the performance of our block diagonalization results. Numerical example on color video is presented to verify the effectiveness of quaternion tensor SVD. All the experiments were run on Intel(R) Core(TM) i7-10700 CPU @2.90GHZ with 16GB of RAM using MATLAB, toolbox QTFM, and Tensorlab \footnote{https://www.tensorlab.net/}. The code is available from \url{https://github.com/Panjun009/BlkDiagCir_quaternion.git}.

\subsection{Application in Computing the Inverse of a Quaternion Circulant Matrix}
\label{Application1}
In the first application, we compute the inverse of a quaternion circulant matrix by applying our block diagonalization results of a quaternion circulant
matrix. For comparison, we use the quaternion matrix inverse function "inv'' in MATLAB toolbox QTFM. Without knowing the block-structure derived in the paper, one needs to form the quaternion circulant matrix first and then compute its inverse. While based on Theorem \ref{Thm:Sec2-1}, we are able to design a fast method to compute the inverse of a quaternion circulant matrix. 

In axis-system $(\breve{\mu},\breve{\alpha},\breve{\beta})$, given a quaternion circulant matrix $\breve{\mathbf{S}}= \text{circ}(\breve{\mathbf{s}})$, here $\breve{\mathbf{s}}$ is the first column of $\breve{\mathbf{S}}$. We denote its inverse $\breve{\mathbf{S}}^{-1}$ as $\breve{\mathbf{Z}}$. Since the inverse of a quaternion circulant matrix is also quaternion circulant matrix, we only need to compute its first column. Let $\breve{\mathbf{z}}$ be the first column of $\breve{\mathbf{Z}}$, and $\bm{\breve{\sigma}}$ be the vector by applying quaternion Fourier transform on $\breve{\mathbf{z}}$, i.e., $
\breve{\bm{\sigma}} =\text{qfft}(\breve{\mathbf{z}},\breve{\mu}, \text{'L'}).
$ The inverse $\breve{\mathbf{Z}}$ can be given by  $\breve{\mathbf{Z}}= \text{circ}(\breve{\mathbf{z}})$.  From Theorem \ref{Thm:Sec2-1}, $\breve{\mathbf{F}}_{\breve{\mu}}\breve{\mathbf{S}}\breve{\mathbf{F}}^*_{\breve{\mu}}=\breve{\mathbf{\Lambda}}$, where $\breve{\mathbf{\Lambda}}$ has the block structure \eqref{1}. We can get that 
$\breve{\mathbf{F}}_{\breve{\mu}}\breve{\mathbf{Z}}\breve{\mathbf{F}}^*_{\breve{\mu}}=\breve{\mathbf{\Lambda}}^{-1}$, and $\breve{\mathbf{\Lambda}}^{-1}$ has the same block structure as $\breve{\mathbf{\Lambda}}$. Below we present a fast method to obtain $\breve{\mathbf{z}}$.  
\begin{enumerate}

\item We apply quaternion Fourier transform on the first column $\breve{\mathbf{s}}$ of $\breve{\mathbf{S}}$, i.e., 
$
\breve{\bm{\lambda}} =\text{qfft}(\breve{\mathbf{s}},\breve{\mu}, \text{'L'}).
$

\item  According to the block structure \eqref{1}, we construct  $1\times 1$ blocks and  $2\times 2$ blocks 
  from the main diagonal and the anti lower sub-diagonal of  $\breve{\mathbf{\Lambda}}$ whose entries are given by $ \breve{\bm{\lambda}}$. In other words, the blocks are formed by the entries in $ \breve{\bm{\lambda}}$.
  
\item We compute the inverse matrices of  $1\times 1$ blocks and  $2\times 2$ blocks by their closed-form. 

\item   Inversely to step 2, we construct $\breve{\bm{\sigma}}$ by the inverse matrices obtained at Step 3 based on the block structure of $\breve{\mathbf{Z}}$ and the results in  Theorem  \ref{Thm:Sec2-1}.  Then $\breve{\mathbf{z}}$ is given by applying quaternion inverse Fourier transform on $\breve{\bm{\sigma}}$, i.e., $\breve{\mathbf{z}}=\text{iqfft}(\breve{\bm{\sigma}},\breve{\mu}, \text{'L'})$. 
 
\end{enumerate}  

To verify the effectiveness of the fast method, we generate a quaternion circulant matrix and compute its inverse. The entries of the first column of a quaternion circulant matrix are generated uniformly at random by the "randq'' function in MATLAB toolbox QTFM.  
We generate 25 quaternion circulant matrices of each dimension, and report the average time in Fig. \ref{Figs.cirInv}. The distance between an identity matrix and the product of the quaternion circulant matrix and its inverse is calculated as follows:
$$
\text{distance}=\max\{\mathbf{I}-\breve{\mathbf{S}}\breve{\mathbf{Z}},\mathbf{I}-\breve{\mathbf{Z}}\breve{\mathbf{S}}\}.
$$ 
Here $ \breve{\mathbf{Z}}$ is the inverse computed by the 
computational methods. For simplicity, we refer our proposed 
fast method as fast-circulant-inverse, and the method using inverse function as "inv" in Fig. \ref{Figs.cirInv}.

\begin{figure} [H]
\center
\includegraphics[width=0.5\textwidth]{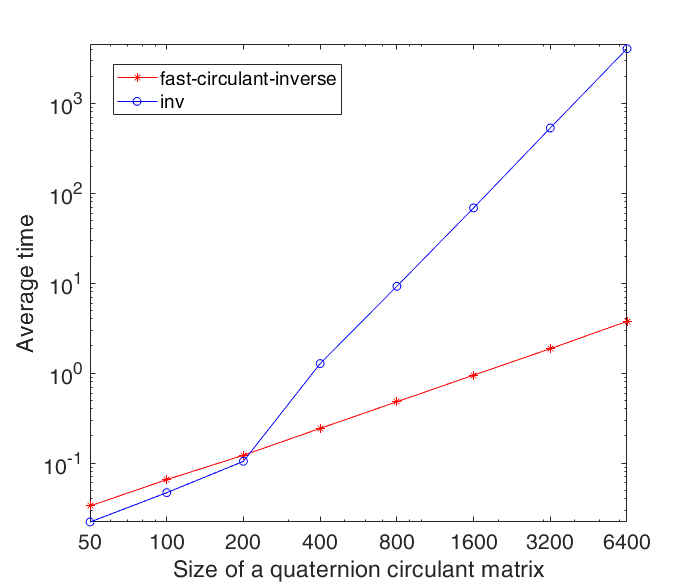}\includegraphics[width=0.5\textwidth]{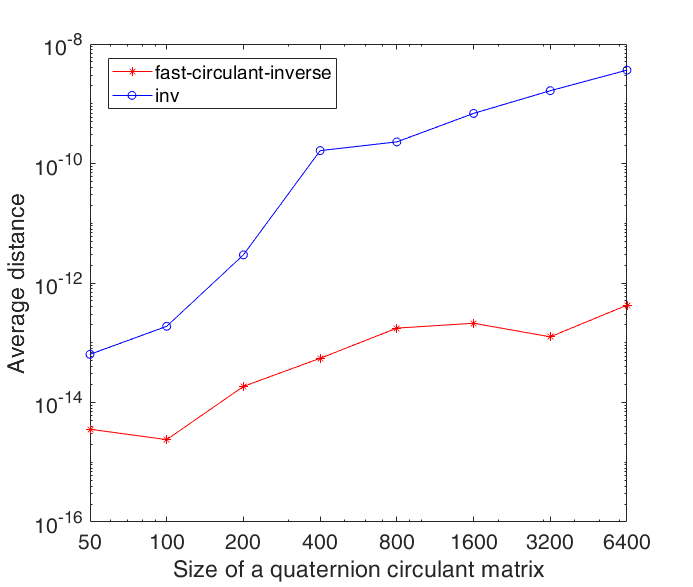}
\caption{The results of the inverse of quaternion circulant matrix.}\label{Figs.cirInv}
\end{figure}

According to Fig. \ref{Figs.cirInv}, we observe that the inverse of
quaternion circulant matrix is computed efficiently by our proposed fast method in $O(n \log n)$ operations, which is much faster than the method directly using MATLAB inverse function "inv''. From the distance metric, the inverse by the fast method is more accurate than the one obtained by the "inv'' function.  
Moreover, we remark that the proposed fast method is implemented on quaternion vectors. So we do not need to store the entire circulant matrix, but only the corresponding vectors, which can save lots of storage space. 

\subsection{Application in Solving Quaternion Toeplitz Systems}

In this section, we will study the quaternion Toeplitz matrix system by applying the preconditioned conjugate gradient method with quaternion circulant preconditioner. 
In quaternion signal processing \cite{navarro2013prediction,took2010quaternion}, we often need to estimate the transmitted quaternion signal from a sequence of received quaternion signal samples or to model an unknown system by using a linear system model. Let $\breve{x}_t$ be a discrete-time wide-sense stationary zero-mean quaternion-valued process. A linear predictor of order $n$ is given by the form
$$
\breve{y}_t = \sum_{s=1}^{n} \breve{x}_{t-s} \breve{a}_s, 
$$
where $\breve{y}_t$ is the predicted value 
based on the quaternion data $\{ \breve{x}_s \}_{s=t}^{t-n}$, and $\{ \breve{a}_s \}_{s=1}^n$ are the quaternion predictor coefficients.  The prediction error of order $n$ is defined as the difference between the actual value $\breve{x}_t$ and the
predicted value $\breve{y}_t$. Hence the 
predictor coefficients $\{ \breve{a}_s \}_{s=1}^n$
should be chosen to make the prediction error as small as possible. Similar to the linear system of equations in the complex number field  \cite{Giordano1985},  by minimizing the prediction error in the least squares sense, the optimal least squares predictor coefficients are given by the solution 
of the linear system of equations:
\begin{equation} \label{Rr}
\breve{\bf R} \breve{\bf a} = \breve{\bf r},
\end{equation}
where 
$$
\breve{\mathbf{R}} = 
\left(
           \begin{array}{ccccc}
            \breve{r}_{0} &
            \breve{r}_{1} &\cdots & \breve{r}_{n-2} &
            \breve{r}_{n-1} \\
            \breve{r}_{1}^* &
            \breve{r}_{0} & \cdots 
            &  &
            \breve{r}_{n-2} \\
            \vdots& \ddots &\ddots& \ddots & \vdots\\
						\breve{r}_{n-2}^* & 
            & \cdots & 
            \breve{r}_{0} & \breve{r}_{1} \\
            \breve{r}_{n-1}^* &
            \breve{r}_{n-2}^* &\cdots& \breve{r}_{1}^* & 
            \breve{r}_{0} 
           \end{array}
         \right), \quad
 \breve{\mathbf{r}} = 
\left(
           \begin{array}{c}
            \breve{r}_{1}^* \\
            \breve{r}_{2}^* \\
             \ddots  \\
            \breve{r}_{n-1}^* \\
            \breve{r}_{n}^* \\
            \end{array}
            \right ),
$$
and $\breve{r}_{s-\ell} = {\cal E}[ \breve{x}_{t-\ell}^* \breve{x}_{t-s} ]$, here ${\cal E}(\cdot)$ is the expectation operator. It is easy to deduce that $\breve{r}_0$ is a real number, and $\breve{r}_{s-\ell}^* = \breve{r}_{\ell-s}$. We notice that $\breve{\mathbf{R}}$ is a Hermitian quaternion Toeplitz matrix. Similar to solving complex Toeplitz matrix system, we can adopt preconditioned conjugate gradient method (PCG) to solve the equation \eqref{Rr} efficiently. More precisely, we use quaternion circulant matrix $\breve{\mathbf{S}}$ to precondition quaternion Toeplitz system \eqref{Rr} by solving the following preconditioned system instead,
\begin{equation} \label{preRr}
\breve{\mathbf{S}}^{-1}\breve{\bf R} \breve{\bf a} = \breve{\mathbf{S}}^{-1}\breve{\bf r}.
\end{equation}
For complex Toeplitz matrix system, there are many different choices of circulant preconditioners. In this paper, we consider T. Chan's circulant preconditioner \cite{chan1988optimal} in quaternion field, i.e., the $t$-th entry of $\breve{\mathbf{s}}$ that generates circulant matrix $\breve{\mathbf{S}}=\text{circ}(\breve{\mathbf{s}})$ is given by
\begin{equation}\label{TC-pre}
\breve{\mathbf{s}}_t=\left\{
\begin{array}{cc}
\frac{(n-t)\breve{\mathbf{r}}^*+t\breve{\mathbf{r}}_{n-t}}{n} &0\leq t< n,\\
\breve{\mathbf{s}}_{n+t}& 0<-t< n.
\end{array}
\right.
\end{equation}
However, in general no priori knowledge about auto-covariance of the process is provided in practice. In other words, $\breve{\bf R}$ is unknown.  While if we take $M$-data samples $\{\breve{x}_k\}^{k=M}_{k=1}$, we can still estimate the auto-covariance matrix $\breve{\mathbf{R}}$ from the data samples $\{\breve{x}_k\}^{k=M}_{k=1}$ to formulate a least squares prediction problem.  There are various types of windowing methods to estimate the auto-covariance matrix $\breve{\mathbf{R}}$, for instance, the correlation, covariance, pre-windowed and post-windowed methods, see \cite{ng1994fastiterative,ng2004iterative}. 

Let $\{\breve{x}_1,\cdots,\breve{x}_M\}$ be the set of data samples. For simplicity, we form the data matrix $\breve{\mathbf{T}}$ from the data samples with correlation windowing method by assuming that the data prior to $k=0$ and after $k=M$ are zero. Now the least squares estimation $\breve{\mathbf{a}}$ can be obtained by solving 
\begin{equation}\label{lsq}
\min\|\breve{\mathbf{T}}_w\breve{\mathbf{a}}-\breve{\mathbf{y}}\|_2,
\end{equation}
where $\breve{\mathbf{T}}_w\in \mathbb{H}^{(M+n-1)\times n}$ is a rectangular Toeplitz matrix given by 
$$
\breve{\mathbf{T}}_w=\left(
           \begin{array}{ccc}
            \breve{x}_{1} &
            ~ &~ \\
            \vdots&\ddots
              & ~\\ 
			 \breve{x}_{n} & \cdots & \breve{x}_{1} \\
            \vdots &\ddots &\vdots\\
               \vdots &\ddots &\vdots\\
           \breve{x}_{M} & \cdots   & \breve{x}_{M-n+1} \\      
           ~ &\ddots&\vdots\\
           ~ &~&\breve{x}_{M}\\
           \end{array}
         \right). 
$$ 
Therefore, the least squares solutions to \eqref{lsq} can be obtained by solving the following equation,
\begin{equation}\label{Topsyt}
\frac{1}{M}(\breve{\mathbf{T}}^*_w\breve{\mathbf{T}}_w)\breve{\mathbf{a}}=\frac{1}{M}\breve{\mathbf{T}}^*_w\breve{\mathbf{y}}.
\end{equation}
We remark that $\frac{1}{M}(\breve{\mathbf{T}}^*_w\breve{\mathbf{T}}_w)$ is a Hermitian quaternion Toeplitz matrix which can be regarded as an approximation of $\breve{\mathbf{R}}$, more precisely, $\breve{r}_t=\frac{1}{M}\sum\limits^{M-|t|}_{l=1}\breve{x}^*_l\breve{x}_{l+|t|}$. Now the solution $\breve{\mathbf{a}}$ can be solved by preconditioned conjugate gradient method with quaternion circulant preconditioner given in equation  \eqref{TC-pre}.  It is known that the PCG requires calculating the product of the quaternion circulant preconditioner's inverse and a quaternion vector. Since the inverse of the quaternion circulant matrix can be calculated by the fast method introduced in Section \ref{Application1}, the product of the inverse of the quaternion circulant preconditioner and a quaternion vector can be solved efficiently, without multiplying the entire inverse matrix by the quaternion vector.

Next, to test the effectiveness of PCG with quaternion circulant preconditioner in solving the Toeplitz system \eqref{Topsyt}, we 
consider the first order and second order autoregressive processes, i.e.,
\begin{eqnarray*}
\text{AR}(1)&:& \quad \breve{x}_t= \rho \breve{x}_{t-1}+\breve{v}_t,\\
\text{AR}(2)&:& \quad \breve{x}_t+\tau_1 \breve{x}_{t-1}+\tau_2 \breve{x}_{t-2}=\breve{v}_t,
\end{eqnarray*}
where $\{\breve{v}_t\}$ is a white noise process with variance $\eta^2$, number $\rho$ and $(\tau_1,\tau_2)$ are parameters of AR(1) and AR(2) respectively. 

In this numerical example, we generate $M=m*n$ samples $\{\breve{x}_t\}^M_{t=1}$ from the AR(1) with $\rho=0.3,0.9,0.99$, and from the AR(2) with $(\tau_1,\tau_2)=(0.1,0.5), (0.9,0.5),(0.99,0.99)$ respectively. The input of AR(1) and AR(2) are generated uniformly at random by the "randq'' function in MATLAB toolbox QTFM. The white noise process $\{\breve{v}_t\}$ is generated with variance $\eta^2$ equals $1$. We formulate the least squares prediction system \eqref{Topsyt} by the correction windowing method. For each set of parameters, we generate 25 such systems. The stopping criterion for preconditioned conjugate gradient method is set to be $\|\breve{\mathbf{e}}_k\|_2/\|\breve{\mathbf{e}}_0\|_2< 10^{-7}$, here $\breve{\mathbf{e}}_k$ is the residual vector after $k$ iterations. We employ the circulant preconditioner \eqref{TC-pre} for the preconditioned system and  report  the average iterations and computational time in Table \ref{tab:AR1} and Table \ref{tab:AR2}. For comparison, we solve the original system by the conjugate gradient method (CG) which can be seen as PCG with identity matrix $\mathbf{I}_n$ as its preconditioner. From Table \ref{tab:AR1} and Table \ref{tab:AR2}, we observe that:
\begin{itemize}
\item In terms of iterations, solving the preconditioned system requires much fewer iterations than solving the original system. As $n$ increases, the number of iterations to solve the original system increases much faster than the number of iterations to solve the preconditioned system. This phenomenon is significant when the parameter set $\rho$ in AR(1) or $(\tau_1,\tau_2)$ in AR(2)  is closer to 1.
\item In terms of computational time, solving the preconditioned system is faster than solving the original system. The advantage of preconditioned system is obvious especially when the parameter set $\rho$ in AR(1) or $(\tau_1,\tau_2)$ in AR(2) is getting closer to 1.
\end{itemize}

In summary, quaternion 
circulant preconditioner is effective in solving quaternion Toeplitz 
systems. Our theoretical results show that the inverse of an 
$n$-by-$n$ quaternion
matrix can be computed in $O(n \log n)$ operations, and therefore
the preconditioned conjugate gradient method is quite efficient for solving 
quaternion Toeplitz systems arising from the prediction of quaternion
signals.

\begin{table}[h]
  \centering 
  \begin{tabular}{||c|c||c|c||c|c||c|c||c|c||}
 
  \hline
  \multicolumn{2}{||c||}{~} &\multicolumn{8}{|c||}{$\rho=0.3$} \\
\hline
 \multicolumn{2}{||c||}{n}&\multicolumn{2}{|c||}{100}&\multicolumn{2}{|c||}{200}&\multicolumn{2}{|c||}{400}&\multicolumn{2}{|c||}{800}\\
\hline
m& &$\mathbf{I}_n$&$\breve{\mathbf{S}}_n$ &$\mathbf{I}_n$&$\breve{\mathbf{S}}_n$ &$\mathbf{I}_n$&$\breve{\mathbf{S}}_n$&$\mathbf{I}_n$&$\breve{\mathbf{S}}_n$\\  
 \hline
  \multirow{2}*{2} &iteration&50 &25 &69 &27 &74 & 30&86 &33  \\
  \cline{2-10}&computational time&0.28 &0.17 & 0.42&0.23 &0.56 & 0.36&0.85 &0.60  \\ 
\hline
 \multirow{2}*{4} &iteration&39  &20  &46 &22  &52  &23  & 58 &24 \\
  \cline{2-10}&computational time&0.24  &0.15  &0.33 &0.22  &0.46  &0.33  & 0.74 &0.56\\
 \hline
   \multirow{2}*{8} &iteration&30 &16 &36 &17 &40&18 &44 &19 \\
  \cline{2-10}&computational time&0.20 & 0.13& 0.28&0.19 &0.41 &0.31 &0.66 &0.54 \\ 
\hline
 m&    &\multicolumn{8}{|c||}{$\rho=0.9$} \\
 \hline
  \multirow{2}*{2} &iteration&134 &29 &222 & 31&356 &36&503 &39 \\
  \cline{2-10}&computational time&0.64&0.18 &1.05 &0.25 &1.77 & 0.38& 2.92&0.65 \\ 
\hline
  \multirow{2}*{4} &iteration&115 & 23 & 188&26 &278  & 27 &377  &29\\
  \cline{2-10}&computational time& 0.55 & 0.16 &0.91 &0.22  & 1.43 & 0.35 & 2.20 &0.58\\
 \hline
   \multirow{2}*{8} &iter&102 & 19&158 &20 & 233&22 &313 &23 \\
  \cline{2-10}&computational time&0.51 &0.14 &0.79 &0.20 &1.25 &0.33 &1.88 &0.56 \\ 
\hline
 m&    &\multicolumn{8}{|c||}{$\rho=0.99$} \\
 \hline
  \multirow{2}*{2} &iteration&165 &31  & 344 & 37 &657 &40 & 1282&44  \\
 \cline{2-10}&computational time&  0.75&0.19 & 1.56 &0.27  & 3.03 &0.40 &6.26 & 0.64 \\
\hline
  \multirow{2}*{4} &iteration&149 &26  & 286 &28  & 547 &31 &1022  & 33 \\
  \cline{2-10}&computational time& 0.69 &0.17  &1.31 & 0.23 &2.56  &0.36 & 5.05&0.59  \\
 \hline
   \multirow{2}*{8} &iteration&137 &22  &257  &23  &474  &25 &861 &27  \\
  \cline{2-10}&computational time&0.64 &  0.15& 1.19 &0.21  & 2.28 &0.33 & 4.33&0.57  \\
\hline   
 \end{tabular}
 \caption{Average number of iterations and computational time (in seconds) for AR(1) process }\label{tab:AR1}
\end{table}

\begin{table}[h]
  \centering 
  \begin{tabular}{||c|c||c|c||c|c||c|c||c|c||}
  \hline
  \multicolumn{2}{||c||}{~} &\multicolumn{8}{|c||}{$\tau_1=0.1$,\quad $\tau_2=0.5$} \\
\hline
 \multicolumn{2}{||c||}{n}&\multicolumn{2}{|c||}{100}&\multicolumn{2}{|c||}{200}&\multicolumn{2}{|c||}{400}&\multicolumn{2}{|c||}{800}\\
\hline
m& &$\mathbf{I}_n$&$\breve{\mathbf{S}}_n$ &$\mathbf{I}_n$&$\breve{\mathbf{S}}_n$ &$\mathbf{I}_n$&$\breve{\mathbf{S}}_n$&$\mathbf{I}_n$&$\breve{\mathbf{S}}_n$\\  
 \hline
  \multirow{2}*{2} &iteration& 66&26 &86 &29 &107 & 32&129 &34\\
  \cline{2-10}&computational time&0.35 &0.17 &0.50 &0.24 & 0.69 &0.36 &1.04&0.60  \\ 
\hline
  \multirow{2}*{4} &iteration&52 &21 &67 &22 &80 &24 &92&25 \\
  \cline{2-10}&computational time&0.29 &0.15 &0.43 &0.22 & 0.58&0.33 &0.87&0.55 \\
 \hline
   \multirow{2}*{8} &iteration&46 &17 &56 & 18&65 & 19&71 &20 \\
  \cline{2-10}&computational time&0.27 &0.14 & 0.37&0.20 &0.52 & 0.31&0.78 &0.54 \\ 
\hline
 m&    &\multicolumn{8}{|c||}{$\tau_1=0.9$,\quad $\tau_2=0.5$} \\
 \hline
  \multirow{2}*{2} &iteration&112 &27 &163 &30 &203 & 33&248 &36 \\
  \cline{2-10}&computational time& 0.55&0.18 &0.80 &0.24 &1.10 &0.36 & 1.56&0.60 \\ 
\hline
  \multirow{2}*{4} &iteration&95 &22 &124 &24 &152 &26 &174&27 \\
  \cline{2-10}&time&0.47 &0.15 &0.64 &0.22 &0.88 &0.33 &1.24&0.57 \\
 \hline
   \multirow{2}*{8} &iteration&84 &19 &106 & 20&121 &21 & 137& 22\\
  \cline{2-10}&computational time&0.42& 0.14&0.56 & 0.20&0.75 &0.32 &1.08 &0.55 \\ 
\hline   
 m&    &\multicolumn{8}{|c||}{$\tau_1=0.99$,\quad $\tau_2=0.99$} \\
 \hline
  \multirow{2}*{2} &iteration&221&40 &458 &47 &1002 &52&1924 &55 \\
  \cline{2-10}&computational time&0.99& 0.22&2.05 &0.31 &4.46 &0.44 &8.99 &0.69 \\ 
\hline
  \multirow{2}*{4} &iteration&218 &38 &436 & 41&864 &42 & 1602&43 \\
  \cline{2-10}&computational time&0.98 &0.22 &1.94 & 0.29& 3.89&0.40 & 7.57& 0.63\\
 \hline
   \multirow{2}*{8} &iteration&212 & 34&412 & 37&763 &34 &1410 &35 \\
  \cline{2-10}&computational time&0.94 & 0.20&1.91 &0.28 & 3.45&0.37 &6.77 & 0.67\\ 
\hline   
 \end{tabular}
 \caption{Average number of iterations and computational time (in seconds) for AR(2) process}\label{tab:AR2}
\end{table}

\subsection{Application in Color Image Reconstruction}

Quaternions have been widely used in image processing field. For instance, it can well represent color images  \cite{le2003quaternion, Pei, pei1999color,sangwine1996fourier},  spectro-polarimetric images and polarized images \cite{flamant2020quaternion,gil2022polarized,pan2022separable}.
In this section, we conduct experiments on color video to test the performance of the proposed quaternion T-SVD. 

Given a quaternion tensor $\breve{\mathcal{T}}\in \mathbb{Q}^{n_1\times n_2\times m}$, 
$$
\breve{\mathcal{T}}= \mathcal{T}_0 +\mathcal{T}_1 \mathtt{i}+\mathcal{T}_2 \mathtt{j}+\mathcal{T}_3 \mathtt{k},
$$
where $\mathcal{T}_l\in \mathbb{R}^{n_1\times n_2\times m}$, $l=0,1,2,3$.
We compare quaternion T-SVD (QT-SVD) with the following three methods.

\begin{itemize}

\item We apply standard Fourier transform along each tube of $\breve{\mathcal{T}}$ to obtain a tensor $\breve{\mathcal{D}}$. Then quaternion SVD is used on each frontal slice of $\breve{\mathcal{D}}$. That is,
$t=1,2,\cdots,m$,
$$
\breve{\mathcal{D}}(:,:,t)=\breve{\mathbf{U}}_t\mathbf{\Sigma}_t\breve{\mathbf{V}}^*_t.
$$
Each front slice of the rank-$r$ approximation $\breve{\mathcal{D}}^{[r]}$ is given by
$$\breve{\mathcal{D}}^{[r]}(:,:,t)\doteq\sum\limits^r_{p=1}\breve{\mathbf{U}} _t(:,p)\mathbf{\Sigma}_t(p,p)\big(\breve{\mathbf{V}}_t(:,p)\big)^*.$$
The rank-$r$ approximation $\breve{\mathcal{T}}^{[r]}$ of tensor $\breve{\mathcal{T}}$ is obtained by applying inverse Fourier transform along each tube of  $\breve{\mathcal{D}}^{[r]}$. We refer this method to as the Fourier transform based quaternion tensor factorization (FT-QTF). 

\item t-QSVD\cite{zheng2023approximation}. The t-QSVD method aims to find approximation for a third order quaternion tensor based on T-product of third order quaternion tensors given in \cite{zheng2022block}. Similar to the FT-QTF, the t-QSVD method only involves Fourier transform and quaternion SVD.

\item Standard T-SVD\cite{kilmer2011factorization,kilmer2013third}. We apply standard T-SVD on each component of the quaternion tensor, that is $$\mathcal{T}_l=\mathcal{U}_l \star \mathcal{S}_l\star \mathcal{V}_l, \quad l=0,1,2,3.$$  
Its rank-$r$ approximation $\mathcal{T}^{[r]}_l=\sum\limits^r_{p=1}\mathcal{U}_l(:,p,:)\star \mathcal{S}_l(p,p,:)\star \mathcal{V}_l(:,p,:)^T$. 
The rank-$r$ approximation of 
$\breve{\mathcal{T}}$ denoted as $\breve{\mathcal{T}}^{[r]}$ is given by  
$\breve{\mathcal{T}}^{[r]}=\mathcal{T}^{[r]}_0 +\mathcal{T}^{[r]}_1 \mathtt{i}+\mathcal{T}^{[r]}_2 \mathtt{j}+\mathcal{T}^{[r]}_3 \mathtt{k}.$
\end{itemize}


In the following, we test all the methods on "Mobile'' YUV sequences video  $\footnote{The YUV sequences video data set is downloaded from 
http://trace.eas.asu.edu/yuv/index.html}$ which contains 300 frames. Each frame of the video is a $144\times 176$ RGB image. The color value of each pixel is encoded in a pure quaternion, that is, a pixel value at location $(s,p)$ is given by 
$\breve{m}_{sp}=R_{sp}\tt{i}+G_{sp}\tt{j}+B_{sp}\tt{k}$, where $R,G,B$ denote the red, green and blue components of each pixel respectively. The quaternion representation of RGB image were proposed by Pei \cite{Pei} and Sangwine \cite{sangwine1996fourier}. Hence the video data can be represented by a pure quaternion tensor $\breve{\mathcal{T}}\in \mathbb{Q}^{144\times 176 \times 300}$.  

To show the representation ability of the methods, we  report the  average peak signal-to-noise ratio (psnr) for
the frames from the reconstructed video with the frames from the original video, denoted as
\begin{equation}\label{error}
\text{PSNR}= \frac{\sum\limits^m_{t=1}\text{psnr}\big(\breve{\mathbf{T}}^{[r]}_t,\breve{\mathbf{T}}_t\big)}{m},
\end{equation}
where $\breve{\mathbf{T}}_t= \breve{\mathcal{T}}(:,:,t)$, $\breve{\mathbf{T}}^{[r]}_t = \breve{\mathcal{T}}^{[r]}(:,:,t)$,  and "$\text{psnr}$" is the peak signal-to-noise ratio computed by using "psnr" function in MATLAB.

We also present the average structural similarity index between  the frames from the reconstructed video and the frames from the original video, that is 
\begin{equation}\label{ssim}
\text{SSIM} = \frac{\sum\limits^m_{t=1}\text{ssim}\big(\breve{\mathbf{T}}^{[r]}_t,\breve{\mathbf{T}}_t\big)}{m},
\end{equation}
here "$\text{ssim}$" is the structural similarity index value computed by using "ssim" function in MATLAB. 

Instead of applying the methods directly on $\breve{\mathcal{T}}$, we preprocess the video set by "image mean subtraction". That is, we first compute the mean of all the frames, and then subtract the mean image from all frames. All the methods are used in the processed data. We report the average PSNR and SSIM  (in percentage)  in Table \ref{tab:video}. The average computational time (in seconds) is also presented in the table. 
 
\begin{table}[h]
  \centering 
  \begin{tabular}{|c||c|c||c|c||c|c||c|c||c|}
\hline
 \multirow{2}*{} &\multicolumn{2}{|c||}{$r=10$}&\multicolumn{2}{|c||}{$r=20$}&\multicolumn{2}{|c||}{$r=40$}&\multicolumn{2}{|c||}{$r=80$} &\multirow{2}*{time} \\
\cline{2-9} &PSNR&SSIM  &PSNR&SSIM &PSNR&SSIM&PSNR&SSIM&\\  
 \hline
QT-SVD& \textbf{19.53} &\textbf{ 74.42} &\textbf{22.83} &\textbf{85.62} &\textbf{ 27.89}&\textbf{94.56}&\textbf{37.46} &\textbf{99.24}&518.03\\
\hline
FT-QTF&19.01 & 72.67 &22.36 &84.63&27.53 &94.19&37.29&99.21&305.77  \\
\hline
t-QSVD&19.01 &72.67 &22.36 &84.63 &27.53 &94.19 &37.29&99.21 &191.49\\
 \hline
 T-SVD&19.43&74.03& 22.70&85.30 &27.77 &94.41&37.25 &99.20&\textbf{39.41}\\ 
\hline
 \end{tabular}
 \caption{The reconstruction results of "Mobile" video}\label{tab:video}
\end{table} 
 
From the experimental results, we have the following observations.
\begin{itemize}

\item In terms of PSNR, we can see the value increases as the increase of the value of $r$. The values of PNSR of FT-QTF and t-QSVD are less than those from the other two methods when $r\leq 40$. It is interesting to see that the values of PSNR from all the quaternion-based methods are better than T-SVD when $r=80$. Our QT-SVD has the highest values among all the $r$. It  implies that our method performs the best. 

\item In terms of SSIM, we have similar observation to that of PSNR. The value of SSIM increases  when $r$ gets larger. The values of the QT-SVD are all higher than the others methods. 

\item Regarding the average computational time,  the proposed QT-SVD requires more time than the other methods, followed by FT-QTF and t-QSVD. It is not surprising that the computational time of T-SVD is the least since the quaternion SVD used in the other three methods is much slower than the SVD.

\item We also notice that the results of t-QSVD and FT-QTF are the same. The reason is that the computational principles behind both methods are the same, i.e., applying the standard Fourier transform along the tubes of the input quaternion tensor. 
\end{itemize}

To show the effect of image reconstruction visually, we simply choose the $141$-th - $147$-th frames 
of the video and present their reconstruction results of the methods for $r=20$ in Fig. \ref{mobile.r20}. To better see the results, the "psnr" value is given below each reconstructed frame. Among all the reconstructed frames, the frames reconstructed by our QT-SVD are the best. The "psnr" values are higher than those by the other methods. We also observe that the reconstructed results from the t-QSVD and FT-QTF are the same, due to the same computational principles. We also exhibit the details of different regions of the $147$-th frame in Fig. \ref{mobiledel.r20}. Compared to the other methods,  our QT-SVD  can capture better details, and the color of reconstructed image is closer to the real image, see the sheep's faces, those orange flowers in Fig. \ref{mobiledel.r20} for instance.
\begin{figure}[h]
\includegraphics[width=1\textwidth]{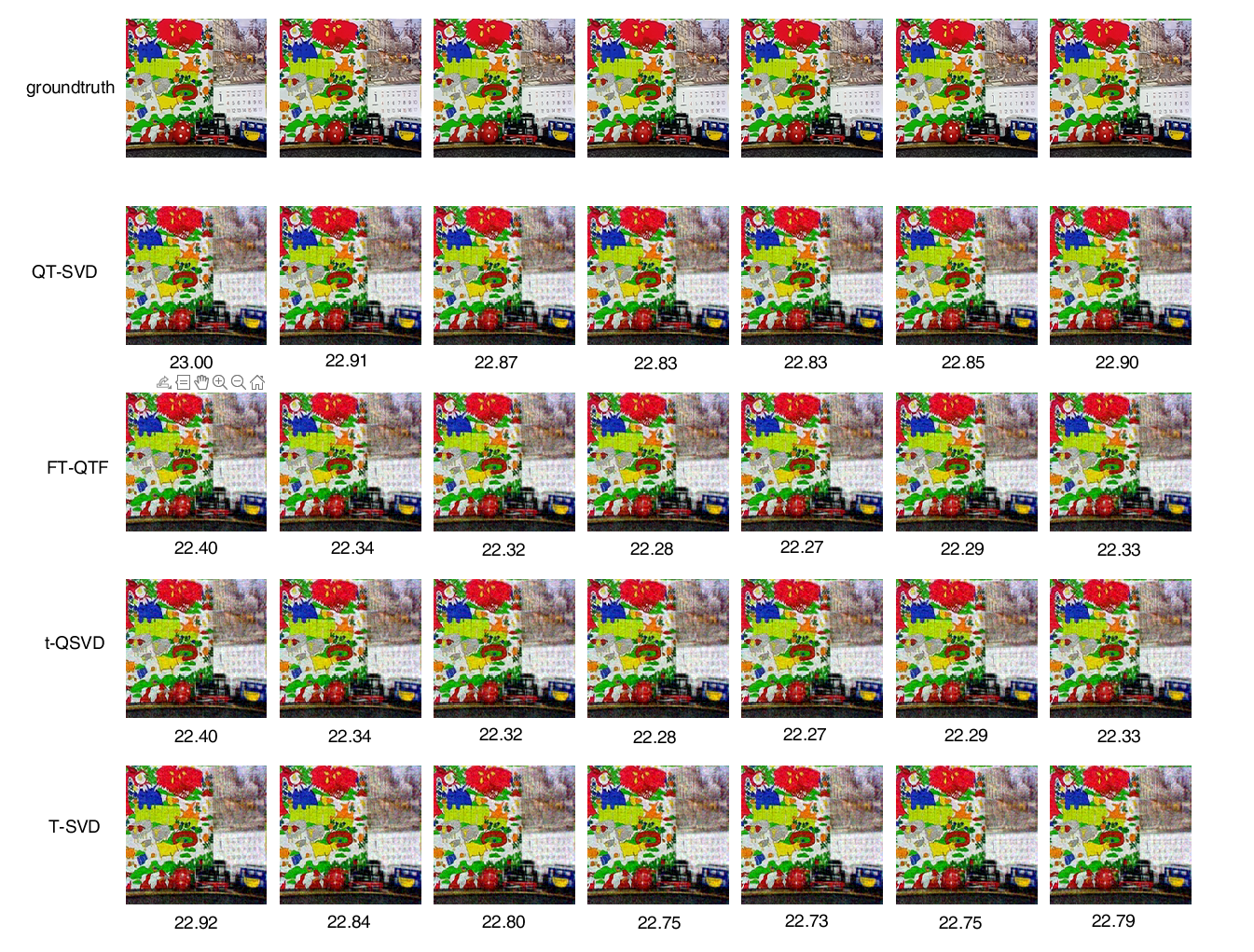}
\caption{The reconstruction results of "Mobile" at frame $141$-th - $147$-th by the methods when $r=20$. The first row represents the  frames in original video.  The "psnr" value is given below each reconstructed frame. }\label{mobile.r20}
\end{figure}

\begin{figure}[h]
\includegraphics[width= \textwidth]{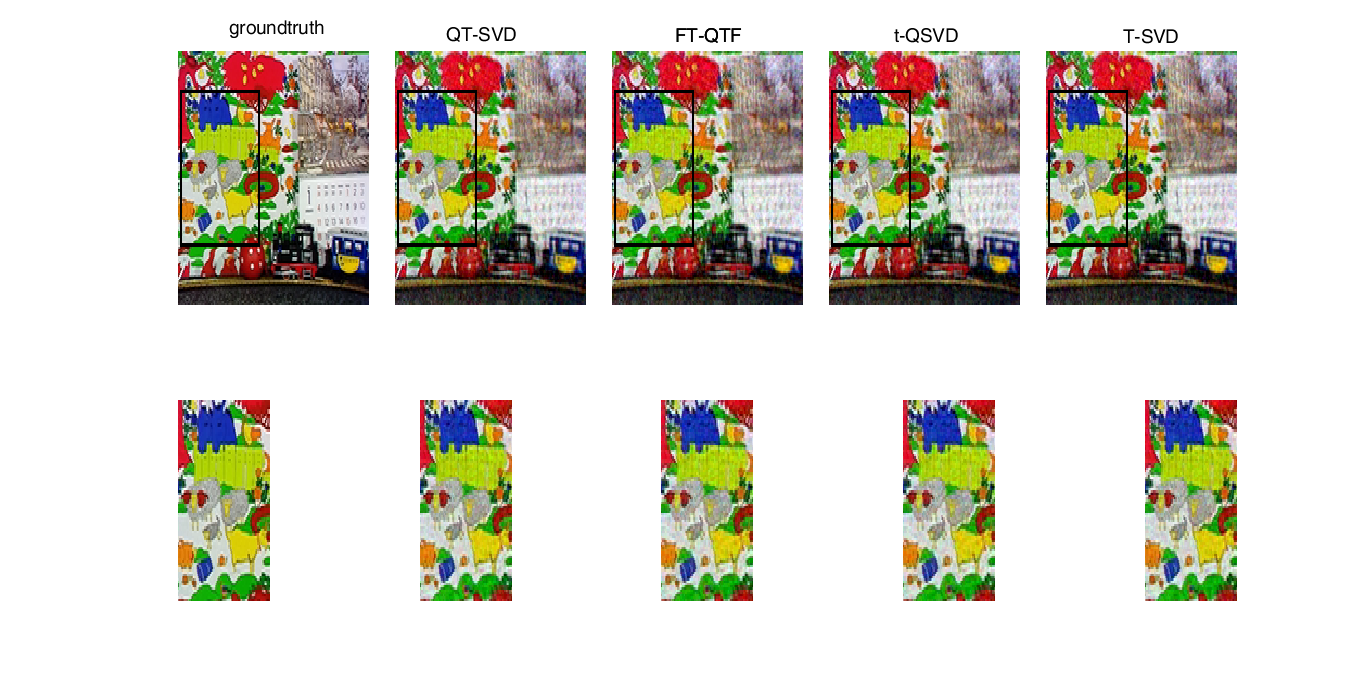}
\caption{The top row shows the reconstructed images of the $147$-th frame of "Mobile" when $r=20$. The bottom row shows the details of the specific area of images on the top row  framed in black.}\label{mobiledel.r20}
\end{figure}

\section{Conclusion}

The paper studied quaternion circulant matrix  and proved that any circulant matrix can be block-diagonalized into 1-by-1 block and 2-by-2 block by discrete quaternion Fourier transform matrix. In other words, discrete quaternion Fourier transform matrix
is not universal eigenvectors for quaternion circulant matrices. Indeed, the eigenvalues 
and their associated eigenvectors of quaternion circulant matrices can be obtained by 
the combination of discrete quaternion Fourier transform matrix and 
quaternion matrices that can diagonalize 2-by-2 block structure of the 
quaternion transformed circulant matrices.
The results are used to studied quaternion tensor singular value decomposition which is based on the well-known T-SVD form.  We tested and showed the proposed block diagonalization
results of circulant matrices for computing quaternion circulant matrix inverse, solving linear prediction of quaternion signal processing. An example of color video is used to demonstrate the effectiveness of quaternion tensor singular value decomposition.



\end{document}